\theoremstyle{plain}
\newtheorem{thm}{Theorem}[section]
\newtheorem{lem}[thm]{Lemma}
\newtheorem{cor}[thm]{Corollary}
\newtheorem{prop}[thm]{Proposition}
\theoremstyle{definition}
\newtheorem{df}[thm]{Definition}
\newtheorem{eg}[thm]{Example}
\newtheorem{asm}[thm]{Assumption}
\theoremstyle{remark}
\newtheorem{rem}[thm]{Remark}
\newcommand{\qad}{\phantom{={}}}
\newcommand{\relmiddle}[1]{\mathrel{}\middle#1\mathrel{}}
\numberwithin{equation}{section}
\newcommand{\E}{\mathbb{E}}
\newcommand{\N}{\mathbb{N}}
\renewcommand{\P}{\mathbb{P}}
\newcommand{\R}{\mathbb{R}}
\newcommand{\X}{\mathbb{X}}
\newcommand{\Y}{\mathbb{Y}}
\newcommand{\Z}{\mathbb{Z}}
\newcommand{\1}{\mbox{1}\hspace{-0.25em}\mbox{l}}
\newcommand{\cC}{\mathcal{C}}
\newcommand{\cE}{\mathcal{E}}
\newcommand{\cF}{\mathcal{F}}
\newcommand{\cH}{\mathcal{H}}
\newcommand{\cI}{\mathcal{I}}
\newcommand{\cK}{\mathcal{K}}
\newcommand{\cM}{\mathcal{M}}
\newcommand{\cP}{\mathcal{P}}
\newcommand{\cV}{\mathcal{V}}
\newcommand{\cX}{\mathcal{X}}
\newcommand{\cY}{\mathcal{Y}}
\newcommand{\sF}{\mathscr{F}}
\renewcommand{\a}{\alpha}
\renewcommand{\b}{\beta}
\newcommand{\Gm}{\Gamma}
\newcommand{\dl}{\delta}
\newcommand{\Dl}{\Delta}
\newcommand{\eps}{\varepsilon}
\newcommand{\lm}{\lambda}
\newcommand{\Lm}{\Lambda}
\newcommand{\sg}{\sigma}
\newcommand{\del}{\partial}
\DeclareMathOperator{\diam}{diam}
\DeclareMathOperator{\hist}{HIST}
\DeclareMathOperator{\im}{Im}
\DeclareMathOperator{\Lip}{Lip}
\newcommand{\plim}[1][]{\mathop{\varprojlim}\limits_{#1}}
\DeclareMathOperator{\rank}{rank}
\DeclareMathOperator{\supp}{supp}
\title{
Large deviation principle for persistence diagrams of random cubical filtrations
}
\author{
Yasuaki \textsc{Hiraoka}\thanks{Kyoto University Institute for Advanced Study, WPI-ASHBi, Kyoto University, Kyoto 606-8501, Japan. \texttt{hiraoka.yasuaki.6z@kyoto-u.ac.jp}}
\and Shu \textsc{Kanazawa}\thanks{Kyoto University Institute for Advanced Study, Kyoto University, Kyoto 606-8501, Japan;
Department of Mathematics, The Ohio State University, Columbus 43210, USA. \texttt{kanazawa.shu.2w@kyoto-u.ac.jp}}
\and Jun \textsc{Miyanaga}\thanks{Department of Mathematics, Graduate School of Science, Kyoto University, Kyoto 606-8502, Japan. \texttt{j.miyanaga@math.kyoto-u.ac.jp}}
\and Kenkichi \textsc{Tsunoda}\thanks{Faculty of Mathematics, Kyushu University, Fukuoka 819-0395, Japan. \texttt{tsunoda@math.kyushu-u.ac.jp}}
}
\date{}
\begin{document}
\maketitle
\begin{abstract}
The objective of this article is to investigate the asymptotic behavior of the persistence diagrams of a random cubical filtration as the window size tends to infinity.
Here, a random cubical filtration is an increasing family of random cubical sets, which are the union of randomly generated higher-dimensional unit cubes with integer coordinates in a Euclidean space.
We first prove the strong law of large numbers for the persistence diagrams, inspired by the work of Hiraoka, Shirai, and Trinh, where the persistence diagram of a filtration of random geometric complexes is considered.
As opposed to prior papers treating limit theorems for persistence diagrams, the present article aims to further study the large deviation behavior of persistence diagrams.
We prove a large deviation principle for the persistence diagrams of a class of random cubical filtrations, and show that the rate function is given as the Fenchel--Legendre transform of the limiting logarithmic moment generating function.
In the proof, we also establish a general method of lifting a large deviation principle for the tuples of persistent Betti numbers to persistence diagrams for broad application.
\end{abstract}

\providecommand{\keywords}[1]
{
  \small	
  \textbf{Keywords } #1
}
\providecommand{\subjclass}[1]
{
  \small	
  \textbf{Mathematics Subject Classification } #1
}

\keywords{Large deviation principle, Persistence diagram, Persistent Betti number, Random cubical filtration}

\subjclass{60F10, 55N31, 60D05}


\section{Introduction}
Many large complex systems such as social and biological networks are modeled by random graphs, where each vertex and edge represent an object and a connection between two individual objects, respectively.
Random graph theory, which dates back to the work of Erd\H os and R\'enyi~\cite{ER59,ER60}, has been providing a good understanding of such large complex systems.
However, we are often interested in relations among multiple objects rather than pairwise relations in some applications.
In such cases, \textit{simplicial complexes}, composed of various dimensional simplices (e.g. vertices, edges, solid triangles, and solid tetrahedra), are useful as natural higher-dimensional objects of graphs.
Motivated by various problems in the growing field of topological data analysis, there has been considerable study of the topology of \textit{random simplicial complexes}.

The study of random simplicial complexes has its origin in the work of Linial and Meshulam~\cite{LM06}.
They introduced a random $2$-dimensional simplicial complex, the so-called \textit{$2$-Linial--Meshulam--Wallach complex}, as a high-dimensional generalization of Erd\H os--R\'enyi graphs, and showed the phase transition of the vanishing of its first homology group with mod $2$ coefficients.
More generally, the \textit{$k$-Linial--Meshulam--Wallach complex} was introduced by Meshulam and Wallach~\cite{MW09}, and its topology has been studied, for example, in~\cite{AL16,AL15,ALLM13,BHK11,CCFK11,HKP17,KP16,Ko10,LP16,LP17,LuP18,NP18}.
Besides, there has been extensive study of other random simplicial complex models as generalizations of Erd\H os--R\'enyi graphs.
Among them, the \textit{multi-parameter random simplicial complex model}, introduced by Costa and Farber, has been drawing attention as a fairly general model (see, e.g.,~\cite{CF16a,CF17a,CF17b,CF16b,FMN22,Fo19,Ka22,OST21}).

Another type of random simplicial complex, the so-called \textit{random geometric complex}, was introduced in~\cite{{K11}} as a natural higher-dimensional generalization of random geometric graphs~\cite{P03}.
In a random geometric complex, the vertices are given as random points in a metric space, and the higher-dimensional simplices are placed according to some deterministic rule based on the proximity relationship among those points (cf. \v Cech complex, Vietoris--Rips complex, alpha complex).
Yogeshwaran--Subag--Adler~\cite{YSA17} proved the strong law of large numbers for the Betti numbers (the dimension of the homology group with real-coefficients) of a random geometric complex built over an ergodic and stationary point process in a Euclidean space as the window size tends to infinity.
See~\cite{BM15,GTT19} for manifold settings instead of Euclidean spaces.
For comprehensive surveys of random simplicial complex models, see also~\cite{BK18,BK22,K14b}.

In this article, we deal with \textit{cubical sets}, composed of various dimensional elementary cubes (e.g., lattice points, line segments, squares, and cubes) in a Euclidean space, rather than simplicial complexes.
In digital image analysis, a cubical set is exploited to represent a digital image data and to obtain information of shapes.
Particularly, the \textit{cubical homology} enables us to extract the information about holes such as loops and cavities, and provides a useful descriptor of topological features in digital images (see Subsection~\ref{ssec:cube_hom} for the definition of cubical sets and the cubical homology).
As an analog of the study of random simplicial complexes, there has been a growing interest in the study of \textit{random cubical sets}.
Hiraoka--Tsunoda~\cite{HT18} proved the strong law of large numbers for the Betti numbers of a class of random cubical sets.
See also~\cite{DL22,HS16,KPR21,WW16} for other types of studies.

On the other hand, \textit{persistent homology}~\cite{ELZ02,ZC05} is getting a lot more attention in the rapidly emerging field of topological data analysis.
Via the theory of persistent homology, we can not only capture holes in a given data but also measure the robustness of the holes.
Let us take a grayscale image data for example.
Since a grayscale image can be regarded as a function representing the intensity of light on each elementary cube, we can construct a \textit{cubical filtration}, an increasing family of cubical sets, by considering the sublevel sets of the function at varying thresholds.
Then, the persistent homology can extract the information about the birth and death times of holes in the filtration.
By plotting all birth-death pairs into a $2$-dimensional parameter space, we get a useful descriptor, called a \textit{persistence diagram}, of the multiscale topological features embedded in the grayscale image data.
A birth-death pair far from the diagonal line of persistence diagram is, in fact, robust to a perturbation of data~\cite{CEH07}, and usually considered as a characterization of data.
However, since grayscale image data usually contain measurement noise, or more importantly could be drawn from some probability distribution, it is significant to examine the effect of randomness on persistence diagrams.
With the motivation, we study the persistence diagram of a \textit{random cubical filtration}.

The objective of this work is to investigate the asymptotic behavior of the persistence diagrams of random cubical filtrations as the window size tends to infinity.
We first prove the strong law of large numbers for the persistence diagrams of a class of random cubical filtrations, which states that the persistence diagram converges vaguely to a deterministic measure almost surely (Theorem~\ref{thm:LLN_PD}).
The proof is highly inspired by the work of Hiraoka, Shirai, and Trinh~\cite{HST18}, where the persistence diagram of a filtration of random geometric complexes is considered.
As the first step of the proof, we prove the strong law of large numbers for persistent Betti numbers, roughly speaking which is the number of holes that persist over a fixed time-interval, using their nearly additive property (Theorem~\ref{thm:LLN_PB}).
Second, we apply the method of lifting the strong law of large numbers for persistent Betti numbers to persistence diagrams, developed in~\cite{HST18}.

The principal aim in this article is to study the large deviation behavior of the persistence diagrams of random cubical filtrations.
In other words, we are interested in the small probability that the persistence diagram is far away from the deterministic limiting measure in the strong law of large numbers.
Though such an event is usually rare, they may exert a material impact on functionals of persistence diagrams.
The \textit{$($Donsker--Varadhan type$)$ large deviation principle} characterizes such small probabilities in terms of a rate function, which quantifies the rareness of the events (see Definition~\ref{df:LDP}).
We prove a large deviation principle for the persistence diagrams of a class of random cubical filtrations, and show that the rate function is given as the Fenchel--Legendre transform of the limiting logarithmic moment generating function (Theorem~\ref{thm:LDP_PD}).
For the proof, we establish a general method of lifting a large deviation principle for the tuples of persistent Betti numbers to persistence diagrams (Theorem~\ref{thm:LDP_PBtuple-PD}).
As described in Section~\ref{sec:LDP_PDs}, our method relies on the technique of exponentially good approximation in the large deviation theory.
Furthermore, in order to show the large deviation principle for the tuples of persistent Betti numbers, we generalize a large deviation principle for regular nearly additive real-valued processes, established by Sepp\"al\"ainen and Yukich~\cite{SY01}, to vector-valued processes with weaker nearly additive property (Theorem~\ref{thm:LDP_ernap}).

We remark on a few studies on the large deviation behaviors of several topological invariants.
Yogeshwaran and Adler~\cite{YSA17} proved a concentration inequality for the Betti numbers of a random geometric complex.
Samorodnitsky and Owada~\cite{SO22} studied an upper tail large deviation estimate for the Betti numbers of a multi-parameter random simplicial complex in the critical dimension.
Furthermore, Hirsch and Owada~\cite{HO22} proved a large deviation principle for counting measures associated to the configuration of a homogeneous Poisson point process in a Euclidean space.
As an application, they proved the large deviation principle for the first persistent Betti numbers of a filtration of random geometric complexes (alpha complexes) built over a homogeneous Poisson point process in $2$-dimensional Euclidean space.
In their setting, the large deviation principle for the higher-order persistent Betti numbers in higher-dimensional Euclidean spaces is still an open problem since they exploit the fact that the number of $1$-simplices in the planar Delaunay triangulation grows at most linearly with respect to the number of vertices, which is not the case for higher-dimensional simplex counts (see~\cite[Remark~4.2]{HO22} for details).
In contrast, in the case of random cubical filtration model, we prove large deviation principles for arbitrarily higher-order persistent Betti numbers (Theorem~\ref{thm:LDP_PBtuple}).
Moreover, Theorem~\ref{thm:LDP_PD} is the first result on the large deviation principle for persistence diagrams themselves throughout all the models.

The remainder of this article is organized as follows.
In Section~\ref{sec:mdl-rslt}, we provide some definitions, namely cubical set, cubical homology, cubical filtration, persistent homology, persistence diagram, and random cubical filtration.
Furthermore, we introduce our random cubical filtration model and summarize the main results.
Section~\ref{sec:LDP_ernap} presents a general large deviation principle for exponentially regular nearly additive vector-valued processes.
In Section~\ref{sec:LDP_PB}, we discuss the exponential regularity and exponentially near additivity of persistent Betti numbers.
In Section~\ref{sec:LDP_PDs}, we establish a general method of lifting a large deviation principle for the tuples of persistent Betti numbers to persistence diagrams using the technique of exponentially good approximation.
In Appendix~\ref{sec:LLN_rnap}, we prove the strong law of large numbers for strongly regular nearly additive vector-valued processes.
As we will see in Remark~\ref{rem:Exp-STG}, the strong regularity and strongly nearly additivity is weaker than the exponential regularity and exponentially nearly additivity, respectively.
In Appendix~\ref{sec:trans_LDP}, we review basic methods, needed in Section~\ref{sec:LDP_PDs}, to move around LDPs between different spaces.
In Appendix~\ref{sec:LDP_rdm_meas}, we provide a sufficient condition for an LDP for random measures, which is also used in Section~\ref{sec:LDP_PDs}.

\section{Model and main results}\label{sec:mdl-rslt}
Throughout this article, we fix $d\in\N$ as the dimension of the state space $\R^d$ where cubical sets and cubical filtrations are considered.
In Subsection~\ref{ssec:cube_hom}, we review the definitions of cubical sets and cubical homology concisely.
See Chapter~2 of~\cite{KMM04} for more detailed description.
In Subsection~\ref{ssec:PH_cube_filt}, we define the persistence diagram and persistent Betti number of cubical filtrations precisely.
In Subsection~\ref{ssec:RCF}, we introduce our random cubical filtration model.
Finally, we summarize our main results in Subsection~\ref{ssec:main_rslt}.

\subsection{Cubical homology}\label{ssec:cube_hom}
An \textit{elementary interval} is a closed interval $I\subset\R$ of the form $I=[l,l+1]$ or $I=\{l\}$ for some $l\in\Z$.
Such elementary intervals $I=[l,l+1]$ and $I=\{l\}$ are said to be \textit{nondegenerate} and \textit{degenerate}, respectively.
An \textit{elementary cube in $\R^d$} is a product set $I_1\times I_2\times\cdots\times I_d\subset\R^d$ of $d$ elementary intervals $I_1,I_2,\ldots,I_d$.
Let $\cK^d$ denote the set of all elementary cubes in $\R^d$.
Given an elementary cube $Q=I_1\times I_2\times\cdots\times I_d$ in $\R^d$, its \textit{dimension} $\dim Q$ is defined as the number of nondegenerate elementary intervals in $I_1,I_2,\ldots,I_d$.
We call an elementary cube $Q$ with $\dim Q=q$ an \textit{elementary $q$-cube in $\R^d$}.
For each $q\in\Z$, let $\cK^d_q$ be the set of all elementary $q$-cubes in $\R^d$.
A \textit{cubical set in $\R^d$} is a union of elementary cubes in $\R^d$.
Note here that the above union of elementary cubes is not necessarily a finite union unlike the definition in~\cite{KMM04}.
Instead, we call a finite union of elementary cubes in $\R^d$ a \textit{bounded} cubical set in $\R^d$.

Let $X$ be a cubical set in $\R^d$.
In what follows, we refer to an elementary cube $Q$ such that $Q\subset X$ as an \textit{elementary cube in $X$}.
For $q\in\Z$, let $\cK^d_q(X)$ denote the set of all elementary $q$-cubes in $X$.
The \textit{$q$th cubical chain group} $C_q(X)$ is defined as the $\R$-vector space consisting of all formal linear combinations of finitely many elementary $q$-cubes in $X$ with coefficients in $\R$.
Each element is called a \textit{cubical $q$-chain} and of the form $a_1Q_1+a_2Q_2+\cdots+a_mQ_m$
for some $a_i\in\R$ and $Q_i\in\cK^d_q$~($i=1,2,\ldots,m$).
Some authors use the notation $a_1\widehat Q_1+a_2\widehat Q_2+\cdots+a_m\widehat Q_m$ instead of $a_1Q_1+a_2Q_2+\cdots+a_mQ_m$ in order to stress that the elementary cubes are regarded as algebraic objects and that the linear combination here is in the formal sense.
Obviously, $C_q(X)=0$ for $q<0$ or $q>d$ since $\cK^d_q(X)=\emptyset$ in such cases.
Furthermore, $\cK^d_q(X)$ forms the canonical basis of $C_q(X)$ whenever $\cK^d_q(X)\neq\emptyset$.
For $q\in\Z$, we define the \textit{$q$th cubical boundary map} $\del_q^X\colon C_q(X)\to C_{q-1}(X)$ as the linear extension of
\[
\del_q^XQ
\coloneqq\sum_{j=1}^q(-1)^{j-1}(Q_j^+-Q_j^-)\in C_{q-1}(X)
\]
for any $Q=I_1\times I_2\times\cdots\times I_d\in C_q(X)$.
Here, $Q_j^+\in C_{q-1}(X)$ and $Q_j^-\in C_{q-1}(X)$ are defined by degenerating the $j$th nondegenerate elementary interval in $I_1,I_2,\ldots,I_d$ upward and downward, respectively.
More precisely, letting
\[
I_{i_1}=[l_1,l_1+1],I_{i_2}=[l_2,l_2+1],\ldots,I_{i_q}=[l_q,l_q+1]
\]
be the nondegenerate elementary intervals in $I_1,I_2,\ldots,I_d$, we define
\begin{align*}
Q_j^+&\coloneqq I_1\times\cdots\times I_{i_j-1}\times\{l_j+1\}\times I_{i_j+1}\times\cdots\times I_d
\shortintertext{and}
Q_j^-&\coloneqq I_1\times\cdots\times I_{i_j-1}\times\{l_j\}\times I_{i_j+1}\times\cdots\times I_d.
\end{align*}
\begin{eg}
Set $d=2$, and consider a cubical set $X=[0,1]^2$ in $\R^2$.
\begin{enumerate}
\item Let $Q=\{0\}\times\{0\}=\{(0,0)\}\in\cK^d_0(X)$.
Then,
\[
\del_0^XQ=0\in C_{-1}(X).
\]
\item Let $Q=[0,1]\times\{0\}\in\cK^d_1(X)$.
Then,
\[
\del_1Q
=\{1\}\times\{0\}-\{0\}\times\{0\}
=(1,0)-(0,0)\in C_0(X).
\]
\item Let $Q=[0,1]\times[0,1]\in\cK^d_2(X)$.
Then,
\begin{align*}
\del_2^XQ
&=(\{1\}\times[0,1]-\{0\}\times[0,1])-([0,1]\times\{1\}-[0,1]\times\{0\})\\
&=[0,1]\times\{0\}+\{1\}\times[0,1]-[0,1]\times\{1\}-\{0\}\times[0,1]\in C_1(X).
\end{align*}
\end{enumerate}
\end{eg}
For $q\in\Z$, define subspaces $Z_q(X)\coloneqq\ker\del_q^X$ and $B_q(X)\coloneqq\im\del_{q+1}^X$ of $C_q(X)$, which are called the \textit{$q$th cubical cycle group} and the \textit{$q$th cubical boundary group}, respectively.
A straightforward calculation yields $\del_q^X\circ\del_{q+1}^X=0$ for all $q\in\Z$, that is, $Z_q(X)\supset B_q(X)$.
The \textit{$q$th cubical homology group} $H_q(X)=H_q(X;\R)$ with coefficients in $\R$ is defined as the quotient $\R$-vector space $Z_q(X)/B_q(X)$.
When $X$ is a bounded cubical set in $\R^d$, the dimension of $H_q(X)$ is called the \textit{$q$th Betti number} of $X$, denoted by $\b_q(X)$.

\subsection{Persistent homology for cubical filtrations}\label{ssec:PH_cube_filt}
In this subsection, we review the definition of persistence diagram of an increasing family of cubical sets in $\R^d$.
A \textit{right-continuous cubical filtration in $\R^d$} is an increasing family $\X=\{X(t)\}_{t\ge0}$ of cubical sets $X(t)$ in $\R^d$ such that $X(t)=\bigcap_{t'>t}X(t')$ for every $t\ge0$.
In what follows, we omit the word ``right-continuous'', and simply call $\X$ a cubical filtration.
We say that a cubical filtration $\X=\{X(t)\}_{t\ge0}$ in $\R^d$ is \textit{bounded} if $\bigcup_{t\ge0}X(t)$ is bounded.
Note that if $\X=\{X(t)\}_{t\ge0}$ is a bounded cubical filtration in $\R^d$, then $X(t)$ differs from $\bigcup_{t'<t}X(t')$ only finitely many $t$'s.

Let $\R[\{z^t\colon t\ge0\}]$ be an $\R$-vector space of formal linear combinations of finitely many monomials $z^t$~($t\ge0$), where $z$ is an indeterminate.
The product of two elements in $\R[\{z^t\colon t\ge0\}]$ is defined by the linear extension of $az^t\cdot bz^{t'}\coloneqq abz^{t+t'}$~($a,b\in\R$, $t,t'\ge0$).
This operation equips $\R[\{z^t\colon t\ge0\}]$ with a graded ring structure.

Let $\X=\{X(t)\}_{t\ge0}$ be a bounded cubical filtration in $\R^d$.
For each $q\in\Z$, the \textit{$q$th persistent homology group} $H_q(\X)$ of $\X$ is defined by
\[
H_q(\X)\coloneqq\bigoplus_{t\ge0}H_q(X(t)).
\]
We define the action of monomial $z^u$~($u\ge0$) on $H_q(\X)$ by
\[
z^u\cdot(c_t+B_q(X(t)))_{t\ge0}\coloneqq(c'_t+B_q(X(t)))_{t\ge0}
\text{, where }
c'_t\coloneqq\begin{cases}
c_{t-u}	&\text{if $t\ge u$,}\\
0		&\text{if $t<u$.}
\end{cases}
\]
By the linear extension of the above action of monomials, $H_q(\X)$ has a graded module structure over the graded ring $\R[\{z^t\colon t\ge0\}]$.
The following theorem, which is often called the \textit{structure theorem of persistent homology group}, is crucial for defining the persistence diagram of $\X=\{X(t)\}_{t\ge0}$.
\begin{thm}[{\cite[Theorem~2.1]{ZC05}}]
Let $q\in\Z$ be fixed.
There exists a finite family $\{(b_i,d_i)\}_{i=1}^p$ with $0\le b_i<d_i\le\infty$ such that the following graded module isomorphism holds$:$
\[
H_q(\X)\simeq\bigoplus_{i=1}^p((z^{b_i})/(z^{d_i})).
\]
Here, $(z^t)$ expresses an ideal in $\R[\{z^t\colon t\ge0\}]$ generated by the monomial $z^t$, and $(z^\infty)$ is regarded as the zero ideal.
Furthermore, $\{(b_i,d_i)\}_{i=1}^p$ is uniquely determined as a multiset.
\end{thm}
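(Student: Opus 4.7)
The strategy is to exploit the boundedness of $\X$ to reduce the continuous graded module $H_q(\X)$ over $\R[\{z^t\colon t\ge 0\}]$ to a finitely generated graded module over the ordinary polynomial ring $\R[z]$, and then apply the classical structure theorem for finitely generated graded modules over a graded principal ideal domain.

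First I would extract the finite set of critical times at which the filtration actually changes. Since $\X$ is bounded, $\bigcup_{t\ge 0}X(t)$ consists of only finitely many elementary cubes, and each such cube $Q$ has an entry time $\tau(Q)\coloneqq\inf\{t\ge 0:Q\subset X(t)\}\in[0,\infty)$ which is attained by right-continuity. Enumerating the distinct values of $\tau$ as $0\le t_1<t_2<\cdots<t_n$ and setting $t_{n+1}\coloneqq\infty$, right-continuity forces $X(t)=X(t_i)$ for every $t\in[t_i,t_{i+1})$. Hence the continuous persistent homology module is encoded without loss by the finite persistence module
\[
H_q(X(t_1))\to H_q(X(t_2))\to\cdots\to H_q(X(t_n))
\]
of finite-dimensional $\R$-vector spaces connected by inclusion-induced linear maps.

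Next I would invoke the standard structure theorem. Regarding the direct sum $\bigoplus_{i=1}^n H_q(X(t_i))$ as a graded $\R[z]$-module with the action of $z$ supplied by the inclusion maps above, one obtains a finitely generated graded module over the graded PID $\R[z]$, which decomposes uniquely into cyclic summands of the form $(z^{a_j})/(z^{c_j})$ with $1\le a_j<c_j\le n+1$ (the index $c_j=n+1$ encoding an infinite bar). Relabeling each integer endpoint by the corresponding critical time via $b_j\coloneqq t_{a_j}$ and $d_j\coloneqq t_{c_j}$ (with $t_{n+1}\coloneqq\infty$), and extending the action back to $\R[\{z^t\colon t\ge 0\}]$ by declaring $z^{t-t_i}$ to act as the identity on the component indexed by $t_i$ for $t\in[t_i,t_{i+1})$, yields the stated graded isomorphism.

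For uniqueness of the multiset $\{(b_i,d_i)\}_{i=1}^p$, I would observe that it is recovered from the rank function
\[
(s,t)\mapsto\dim\im\bigl(H_q(X(s))\to H_q(X(t))\bigr)=\#\{i:b_i\le s<t<d_i\},
\]
which is an intrinsic invariant of $H_q(\X)$, via a standard M\"obius-type inversion. The only conceptually delicate point in the argument is the equivalence between the continuous graded-module data and its finite discrete surrogate in the first step; once this reduction is properly set up, the theorem reduces verbatim to the classical result of~\cite{ZC05}.
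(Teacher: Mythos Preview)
The paper does not supply its own proof of this statement: it is quoted verbatim as \cite[Theorem~2.1]{ZC05} and used as a black box to define persistence diagrams. Your sketch is therefore not being compared against anything in the present paper, but it does faithfully reproduce the standard Zomorodian--Carlsson argument (reduce to finitely many critical values using boundedness and right-continuity, apply the structure theorem for finitely generated graded modules over the graded PID $\R[z]$, then relabel by the critical times), so it is consistent with the cited source and correct in outline.
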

In the above theorem, $\{b_i\}_{i=1}^p$ and $\{d_i\}_{i=1}^p$ are called the \textit{$q$th birth times} and \textit{death times}, respectively, and each pair $(b_i,d_i)$ is called the \textit{$q$th birth-death pair} of the cubical filtration $\X=\{X(t)\}_{t\ge0}$.
Intuitively speaking, each birth-death pair $(b_i,d_i)$ corresponds to a $q$-dimensional hole that appears at time $b_i$, persists over the time-interval $[b_i,d_i)$, and disappears at time $d_i$.
We note that the number of the $q$th birth-death pair of the cubical filtration $\X=\{X(t)\}_{t\ge0}$ is trivially bounded above by the number of elementary $q$-cubes in $\bigcup_{t\ge0}X(t)$.

Now, let us write
\[
\Dl=\{(s,t)\in[0,\infty]^2\mid 0\le s<t\le\infty\},
\]
which is naturally homeomorphic to $\{(x,y)\mid0\le x<y\le1\}$ equipped with the usual topology.
For $q\in\Z$, we define the \textit{$q$th persistence diagram} $\xi_q^{\X}$ of the cubical filtration $\X=\{X(t)\}_{t\ge0}$ as a counting measure
\[
\xi_q^{\X}\coloneqq\sum_{i=1}^p\dl_{(b_i,d_i)}
\]
on $\Dl$, where $\dl_{(b_i,d_i)}$ is the Dirac measure at $(b_i,d_i)$, i.e., for any Borel set $A\subset\Dl$,
\[
\dl_{(b_i,d_i)}(A)\coloneqq\begin{cases}
1	&\text{if $(b_i,d_i)\in A$,}\\
0	&\text{if $(b_i,d_i)\notin A$.}
\end{cases}
\]
In order to deal with the convergence of persistence diagrams, we will regard each persistence diagram as an element of the space $\cM(\Dl)$ of Radon measures on $\Dl$ defined below.
A Borel measure $\xi$ on $\Dl$ is called a \textit{Radon measure} if $\xi(K)<\infty$ for any compact set $K\subset\Dl$.
Let $\cM(\Dl)$ denote the set of all Radon measures on $\Dl$.
We equip $\cM(\Dl)$ with the vague topology, i.e., the weakest topology such that for any $f\in C_c(\Dl)$, the map $\cM(\Dl)\ni\xi\mapsto\int_\Dl f\,d\xi\in\R$ is continuous.
Here, $C_c(\Dl)$ be the set of all real-valued continuous functions on $\Dl$ with compact support.
Note that for a sequence $\{\xi^n\}_{n\in\N}$ in $\cM(\Dl)$ and $\xi\in \cM(\Dl)$, the Radon measure $\xi^n$ converges vaguely to $\xi$ as $n\to\infty$ if and only if $\lim_{n\to\infty}\int_\Dl f\,d\xi^n=\int_\Dl f\,d\xi$ for any $f\in C_c(\Dl)$.

Next, we review the notion of persistent Betti number.
Let $0\le s\le t<\infty$.
We denote by $\iota_s^t$ the inclusion map from $X(s)$ to $X(t)$, and by $(\iota_s^t)_*\colon H_q(X(s))\to H_q(X(t))$ the induced linear map of $\iota_s^t$.
We call the rank of the map $(\iota_s^t)_*$ the \textit{$q$th persistent Betti number of $\X$ at $(s,t)$}, and denote it by $\b_q^\X(s,t)$.
The notion of persistent Betti number is a generalization of Betti number.
Indeed, $\b_q^\X(t,t)=\b_q(X(t))$ holds for every $t\ge0$.
Since
\[
\im(\iota_s^t)_*
\simeq\frac{H_q(X(s))}{\ker(\iota_s^t)_*}
=\frac{Z_q(X(s))/B_q(X(s))}{(Z_q(X(s))\cap B_q(X(t)))/B_q(X(s))}
\simeq\frac{Z_q(X(s))}{Z_q(X(s))\cap B_q(X(t))},
\]
we have
\begin{equation}\label{eq:PB}
\b_q^\X(s,t)=\dim\frac{Z_q(X(s))}{Z_q(X(s))\cap B_q(X(t))}.
\end{equation}
Intuitively speaking, $\b_q^\X(s,t)$ expresses the number of $q$-dimensional holds that appear before time $s$ and persist to time $t$ in the filtration $\X=\{X(t)\}_{t\ge0}$.
The following relationship between the persistence diagram and the persistent Betti number is highly important, which is called the $k$-triangle lemma in~\cite{CEH07,ELZ02}.
\begin{thm}[\cite{CEH07,ELZ02}]\label{thm:mass_quadrant}
Let $q\in\Z$ and $0\le s\le t<\infty$ be fixed.
Then, it holds that
\begin{equation}\label{eq:mass_quadrant}
\xi_q^\X([0,s]\times(t,\infty])=\b_q^\X(s,t).
\end{equation}
\end{thm}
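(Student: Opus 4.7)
The plan is to read off both sides from the structure theorem. By the structure theorem, there is a graded $\R[\{z^t : t\ge 0\}]$-module isomorphism
\[
\Phi\colon H_q(\X) \xrightarrow{\ \sim\ } \bigoplus_{i=1}^p \bigl((z^{b_i})/(z^{d_i})\bigr),
\]
and by definition $\xi_q^{\X}([0,s]\times(t,\infty]) = \#\{i : b_i \le s,\ d_i > t\}$. So the task reduces to showing $\b_q^\X(s,t) = \#\{i : b_i \le s,\ d_i > t\}$.

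First, I would match the degree-$r$ homogeneous component of the right-hand side with $H_q(X(r))$ via $\Phi$. The summand $(z^{b_i})/(z^{d_i})$ has a one-dimensional degree-$r$ piece (spanned by the class of $z^r$) precisely when $b_i \le r < d_i$, and is zero otherwise; hence
\[
\dim H_q(X(r)) = \#\{i : b_i \le r < d_i\},
\]
consistent with $\b_q(X(r)) = \b_q^\X(r,r)$. Next, under this identification the inclusion-induced map $(\iota_s^t)_*\colon H_q(X(s)) \to H_q(X(t))$ corresponds, on each summand $(z^{b_i})/(z^{d_i})$, to multiplication by $z^{t-s}$: this is because the action of $z^{t-s}$ on $H_q(\X)$ was defined in Subsection~\ref{ssec:PH_cube_filt} to shift a class at time $s$ to its representative at time $t$, which is exactly the action of $(\iota_s^t)_*$ on cycles modulo boundaries.

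Now I would count the rank summand by summand. On the $i$-th summand, the source is nontrivial iff $b_i \le s < d_i$, the target is nontrivial iff $b_i \le t < d_i$, and on a nontrivial source the generator $z^s$ maps to $z^t$, which is nonzero in the quotient iff $t < d_i$. Since $s \le t$, the summands contributing a one-dimensional image are exactly those with $b_i \le s$ and $t < d_i$. Summing,
\[
\b_q^\X(s,t) = \rank(\iota_s^t)_* = \#\{i : b_i \le s,\ d_i > t\} = \xi_q^\X([0,s]\times(t,\infty]),
\]
which is \eqref{eq:mass_quadrant}.

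The main obstacle is the bookkeeping step asserting that $\Phi$ intertwines $(\iota_s^t)_*$ with multiplication by $z^{t-s}$; this is the content of the structure theorem's naturality and is essentially forced by the definition of the graded-module structure on $H_q(\X)$, but it is the only non-formal point. One can alternatively derive \eqref{eq:mass_quadrant} directly from the formula \eqref{eq:PB} together with the decomposition of $Z_q(X(s))$ and $B_q(X(t))$ dictated by the interval decomposition; either route yields the claim.
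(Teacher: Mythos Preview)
The paper does not actually prove this theorem; it is stated as a cited result (the $k$-triangle lemma from \cite{CEH07,ELZ02}) and no argument is given. So there is nothing in the paper to compare your proof against.

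That said, your proof is correct and is essentially the standard derivation of the $k$-triangle lemma from the interval decomposition: once the structure theorem is in hand, the persistent Betti number is computed summand by summand as you do. Your own caveat about the ``bookkeeping step'' is well placed: the assertion that $(\iota_s^t)_*$ corresponds to multiplication by $z^{t-s}$ under $\Phi$ is exactly the statement that $\Phi$ is a \emph{graded} module isomorphism, so it is not an additional hypothesis but part of what the structure theorem already gives you. You could tighten the write-up by simply invoking that $\Phi$ respects the $z^u$-action by definition of graded module isomorphism, rather than flagging it as a potential obstacle.
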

\begin{rem}
The persistence diagram $\xi_q^\X$ is in fact characterized as the unique counting measure on $\Dl$ satisfying~\eqref{eq:mass_quadrant} for any $0\le s\le t<\infty$.
\end{rem}
The following is an immediate corollary of Theorem~\ref{thm:mass_quadrant} together with the inclusion-exclusion principle.
\begin{cor}
Let $q\in\Z$ and $0\le s_1\le s_2\le t_1\le t_2<\infty$ be fixed.
Then,
\begin{align*}
\xi_q^\X([0,s_2]\times(t_1,t_2])
&=\b_q^\X(s_2,t_1)-\b_q^\X(s_2,t_2)
\shortintertext{and}
\xi_q^\X((s_1,s_2]\times(t_1,t_2])
&=\b_q^\X(s_2,t_1)-\b_q^\X(s_2,t_2)+\b_q^\X(s_1,t_2)-\b_q^\X(s_1,t_1).
\end{align*}
\end{cor}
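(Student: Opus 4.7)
The plan is to derive both identities from Theorem~\ref{thm:mass_quadrant} by writing the rectangles as set-theoretic differences of the ``upper-left quadrants'' $[0,s]\times(t,\infty]$ whose $\xi_q^\X$-mass is given by~\eqref{eq:mass_quadrant}. All mass computations are finite because, restricted to any set of the form $[0,s]\times(t,\infty]$ with $s\le t$, Theorem~\ref{thm:mass_quadrant} forces $\xi_q^\X$-finiteness, so subtractions of measures make sense.

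First I would prove the identity for a rectangle of the form $[0,s_2]\times(t_1,t_2]$. Since $t_1\le t_2$, one has the inclusion
\[
[0,s_2]\times(t_2,\infty]\ \subset\ [0,s_2]\times(t_1,\infty],
\]
and their set difference is exactly $[0,s_2]\times(t_1,t_2]$. Additivity of the measure $\xi_q^\X$ then yields
\[
\xi_q^\X([0,s_2]\times(t_1,t_2])
=\xi_q^\X([0,s_2]\times(t_1,\infty])-\xi_q^\X([0,s_2]\times(t_2,\infty]).
\]
Applying Theorem~\ref{thm:mass_quadrant} to each term—this is legitimate because $s_2\le t_1\le t_2$—gives $\b_q^\X(s_2,t_1)-\b_q^\X(s_2,t_2)$, which is the first claimed formula.

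For the second identity, I would exploit the decomposition
\[
(s_1,s_2]\times(t_1,t_2]\ =\ [0,s_2]\times(t_1,t_2]\ \setminus\ [0,s_1]\times(t_1,t_2],
\]
valid because $s_1\le s_2$, and apply the first identity to each of the two rectangles on the right. Since the hypothesis $s_1\le s_2\le t_1\le t_2$ implies $s_1\le t_1$ and $s_1\le t_2$, Theorem~\ref{thm:mass_quadrant} applies to all four persistent Betti numbers that appear, and collecting terms gives
\[
\b_q^\X(s_2,t_1)-\b_q^\X(s_2,t_2)-\b_q^\X(s_1,t_1)+\b_q^\X(s_1,t_2),
\]
matching the stated expression.

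There is no serious obstacle here; the only point that requires a moment of care is verifying the ordering hypotheses so that every invocation of Theorem~\ref{thm:mass_quadrant} is with an admissible pair $(s,t)$ satisfying $s\le t$, and that every measure being subtracted is finite, so the signed arithmetic is well-defined. Both checks are immediate from $s_1\le s_2\le t_1\le t_2$.
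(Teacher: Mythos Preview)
Your argument is correct and is precisely the inclusion--exclusion derivation the paper has in mind; the paper itself does not spell out a proof beyond noting that the corollary follows immediately from Theorem~\ref{thm:mass_quadrant} together with the inclusion--exclusion principle.
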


\subsection{Random cubical filtration model}\label{ssec:RCF}
For a cubical filtration $\X=\{X(t)\}_{t\ge0}$ in $\R^d$ and an elementary cube $Q\in\cK^d$, the \textit{birth time of $Q$ in $\X$} is defined by
\[
t^\X_Q\coloneqq\inf\{t\ge0\mid Q\in X(t)\}.
\]
By convention, we regard $t^\X_Q\coloneqq\infty$ if $Q\notin\bigcup_{t\ge0}X(t)$.
Obviously, $Q'\subset Q\in\cK^d$ implies $t^\X_{Q'}\le t^\X_Q$.
Conversely, given a family $\{t_Q\}_{Q\in\cK^d}$ in $[0,\infty]$ satisfying that
\begin{equation}\label{eq:birth_condi}
Q'\subset Q\in\cK^d\Rightarrow t_{Q'}\le t_Q,
\end{equation}
we can define a cubical filtration $\X=\{X(t)\}_{t\ge0}$ in $\R^d$ so that $t^\X_Q=t_Q$ for any $Q\in\cK^d$.
Indeed, we may simply set
\[
X(t)=\bigcup\{Q\in\cK^d\mid t_Q\le t\}
\]
for every $t\ge0$.
We call such $\X=\{X(t)\}_{t\ge0}$ the \textit{cubical filtration in $\R^d$ corresponding to} $\{t_Q\}_{Q\in\cK^d}$.

Now, let $\cC^d$ be the set of all cubical filtrations in $\R^d$, and let $\cF^d$ be the smallest $\sg$-field such that the map $\cC^d\ni\X\mapsto t^\X_Q\in[0,\infty]$ is measurable for any $Q\in\cK^d$.
In other words, $\cF^d$ is the $\sg$-field generated by the maps $\{\cC^d\ni\X\mapsto t^\X_Q\in[0,\infty]\colon Q\in\cK^d\}$.
We call a random variable taking values in the measurable space $(\cC^d,\cF^d)$ a \textit{random cubical filtration in $\R^d$}.

Next, we introduce our random cubical filtration model.
For any subsets $A,B\subset\R^d$, define $d_{\max}(A,B)\coloneqq\inf\{\|x-y\|_{\max}\mid x\in A,y\in B\}$, where $\|\cdot\|_{\max}$ is the max norm in $\R^d$.
In this article, we consider a random cubical filtration $\X=\{X(t)\}_{t\ge0}$ in $\R^d$ satisfying the following two assumptions.
\begin{asm}[Stationarity]\label{asm:stationarity}
For every $z\in\Z^d$, the $[0,\infty]^{\cK^d}$-valued random variables $\{t^\X_Q\}_{Q\in\cK^d}$ and $\{t^\X_{z+Q}\}_{Q\in\cK^d}$ have the same probability distribution.
Here, $z+Q\coloneqq\{z+x\mid x\in Q\}\in\cK^d$ for any $z\in\Z^d$ and $Q\in\cK^d$.
In such case, we say that $\X$ is \textit{stationary}.
\end{asm}
\begin{asm}[Local dependence]\label{asm:loc_dep}
There exists an integer $R\ge0$ such that for any subsets $A,B\subset\R^d$ with $d_{\max}(A,B)>R$, the families $\{t^\X_Q\colon Q\in\cK^d,Q\subset A\}$ and $\{t^\X_Q\colon Q\in\cK^d,Q\subset B\}$ are independent.
In such case, we say that $\X$ is \textit{$R$-dependent}.
\end{asm}

As typical random cubical filtration models that satisfy Assumptions~\ref{asm:stationarity} and~\ref{asm:loc_dep}, we introduce the upper and lower random cubical filtrations.
\begin{eg}\label{eg:upper_lower}
Let $\{F_q\}_{q=0}^d$ be a family of probability distribution functions on $[0,\infty]$, i.e., $F_q$ is a right-continuous function on $[0,\infty]$ with $F_q(\infty)=1$ (while not necessarily $F_q(0)=0$ or $\lim_{x\to\infty}F_q(x)=1$) for each $0\le q\le d$.
To each elementary cube $Q\in\cK^d$, we assign a $[0,\infty]$-valued random variable $u_Q$ with probability distribution function $F_{\dim Q}$ independently.
For each $Q\in\cK^d$, we set
\[
\overline t_Q=\min\{u_{Q'}\mid Q'\in\cK^d,Q'\supset Q\}
\quad\text{and}\quad
\underline t_Q=\max\{u_{Q'}\mid Q'\in\cK^d,Q'\subset Q\}.
\]
Noting that both the families $\{\overline t_Q\}_{Q\in\cK^d}$ and $\{\underline t_Q\}_{Q\in\cK^d}$ satisfy~\eqref{eq:birth_condi}, we define $\overline\X=\{\overline X(t)\}_{t\ge0}$ and $\underline\X=\{\underline X(t)\}_{t\ge0}$ as the random cubical filtrations in $\R^d$ corresponding to $\{\overline t_Q\}_{Q\in\cK^d}$ and $\{\underline t_Q\}_{Q\in\cK^d}$, respectively.
Obviously, $\overline\X$ and $\underline\X$ are stationary, also $1$- and $0$-dependent, respectively.
We call $\overline\X$ and $\underline\X$ the \textit{upper} and \textit{lower random cubical filtrations}, respectively, with probability distribution functions $\{F_q\}_{q=0}^d$.
\end{eg}
\begin{rem}
The word ``upper'' and ``lower'' in Examples~\ref{eg:upper_lower} derives from the upper and lower random simplicial complex model, extensively studied in~\cite{CF16a,CF17a,CF17b,CF16b,FMN22,Fo19,HK19}.
In fact, for every $t\ge0$, the random cubical sets $\overline X(t)$ and $\underline X(t)$ can be regarded as the cubical versions of the upper and lower random simplicial complex with parameters $\{F_q(t)\}_{q=0}^d$, respectively.
\end{rem}
We additionally introduce other random cubical filtration models that satisfy Assumptions~\ref{asm:stationarity} and~\ref{asm:loc_dep}, where the birth times of elementary cubes are given in more geometric ways.
For lattice points $z,z'\in\Z^d$, we say that $z$ and $z'$ are \textit{adjacent} if $\|z-z'\|_{L^1}=1$.
Here, $\|\cdot\|_{L^1}$ is the $L^1$-norm in $\R^d$.
\begin{eg}\label{eg:plattice_RCF}
Let $\mu$ be a probability measure on $\R^d$.
Let $\{\eps_z\}_{z\in\Z^d}$ be i.i.d. random variables drawn from $\mu$, and define $x_z\coloneqq z+\eps_z$ for every $z\in\Z^d$.
For each $Q\in\cK^d$, we set
\[
t_Q=\inf\{t\ge0\mid\text{$\|x_z-x_{z'}\|_{\R^d}\le t$ for any adjacent lattice points $z$ and $z'$ in $Q$}\}.
\]
Noting that the family $\{t_Q\}_{Q\in\cK^d}$ satisfies~\eqref{eq:birth_condi}, we define $\X=\{X(t)\}_{t\ge0}$ as the random cubical filtration in $\R^d$ corresponding to $\{t_Q\}_{Q\in\cK^d}$.
Obviously, $\X$ is stationary and $0$-dependent.
\end{eg}
\begin{eg}
Let $\mu$ be a probability measure on $\R^d$ with compact support.
Let $\{\eps_z\}_{z\in\Z^d}$ and $\{x_z\}_{z\in\Z^d}$ be the same as in Example~\ref{eg:plattice_RCF}.
For each $Q\in\cK^d$, we set
\[
t_Q=\inf\left\{t\ge0\relmiddle|Q\subset\bigcup_{z\in\Z^d}\bar B(x_z,t)\right\}.
\]
Here, $\bar B(x_z,t)$ is the closed ball of radius $t$ centered at $x_z$.
Noting again that the family $\{t_Q\}_{Q\in\cK^d}$ satisfies~\eqref{eq:birth_condi}, we define $\X=\{X(t)\}_{t\ge0}$ as the random cubical filtration in $\R^d$ corresponding to $\{t_Q\}_{Q\in\cK^d}$.
Obviously, $\X$ is stationary.
Furthermore, $\X$ is locally dependent since $\mu$ has a compact support.
\end{eg}

\subsection{Main results}\label{ssec:main_rslt}
In this subsection, we summarize our main results.
For each $n\in\N$, we set a rectangular region
\[
\Lm^n=[-n,n]^d\subset\R^d.
\]
Given $n\in\N$ and a random cubical filtration $\X=\{X(t)\}_{t\ge0}$ in $\R^d$, define a restricted random cubical filtration $\X^n=\{X^n(t)\}_{t\ge0}$ by
\[
X^n(t)\coloneqq X(t)\cap\Lm^n
\]
for every $t\ge0$.
Note that $\X^n=\{X^n(t)\}_{t\ge0}$ is a random bounded cubical filtration.
In what follows, $|A|$ denotes the $d$-dimensional Lebesgue measure of a Borel subset $A\subset\R^d$.
In particular, $|\Lm^n|=(2n)^d$.

The first result is the strong law of large numbers for the persistent Betti numbers of a random cubical filtration satisfying Assumptions~\ref{asm:stationarity} and~\ref{asm:loc_dep}.
\begin{thm}\label{thm:LLN_PB}
Let $\X=\{X(t)\}_{t\ge0}$ be a random cubical filtration in $\R^d$ satisfying Assumptions~\ref{asm:stationarity} and~\ref{asm:loc_dep}.
Fix an integer $0\le q<d$ and $0\le s\le t<\infty$.
Then, there exists a constant $\widehat\b_q(s,t)\in[0,\infty)$, depending on $q$, $s$, and $t$, such that
\[
\frac{\E[\b_q^{\X^n}(s,t)]}{|\Lm^n|}\to\widehat\b_q(s,t)\quad\text{as $n\to\infty$}
\]
and
\[
\frac{\b_q^{\X^n}(s,t)}{|\Lm^n|}\to\widehat\b_q(s,t)\quad\text{almost surely as $n\to\infty$.}
\]
\end{thm}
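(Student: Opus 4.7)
The plan is to cast $\Lm \mapsto \b_q^{\X\cap\Lm}(s,t)$, indexed by bounded rectangular subregions $\Lm \subset \R^d$, as a strongly regular nearly additive process in the sense of Appendix~\ref{sec:LLN_rnap} and then invoke the general SLLN stated there. The two assumptions in Subsection~\ref{ssec:RCF} give exactly the structural hypotheses one would want: Assumption~\ref{asm:stationarity} provides translation invariance of the law of the process under $\Z^d$-shifts, and Assumption~\ref{asm:loc_dep} provides the $R$-dependence that will yield the required ergodicity/mixing.

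The heart of the proof is a quantitative near additivity estimate. Concretely, for any partition of a rectangular region $\Lm$ into disjoint rectangular boxes $\{A_k\}$, I would establish
\[
\left|\b_q^{\X\cap\Lm}(s,t)-\sum_k \b_q^{\X\cap A_k}(s,t)\right|
\le C\cdot\#\{Q\in\cK^d\mid Q\subset\Lm,\ Q\cap\textstyle\bigcup_k\partial A_k\neq\emptyset\},
\]
with $C$ depending only on $d$. The mechanism is that the persistent Betti number changes by at most one when a single elementary cube is inserted into, or removed from, the filtration at a given time, so the defect between $\b_q$ of the union and the sum of $\b_q$'s of the pieces is controlled by the cubes that straddle the partition interfaces. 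In terms of the formula~\eqref{eq:PB}, cycles in $Z_q(X(s))$ that are split by the partition must be repaired by boundary cubes sitting on $\bigcup_k\partial A_k$, and the dimension of the associated correction space is bounded by the number of such cubes. Since the number of boundary-touching cubes is of order $|\partial\Lm|$, which is sublinear in $|\Lm|$ when $\Lm=\Lm^n$, this defect is negligible after normalizing by $|\Lm^n|$.

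With near additivity, stationarity, and $R$-dependence in hand, the SLLN follows from the general theorem in Appendix~\ref{sec:LLN_rnap}: the expectations $\E[\b_q^{\X^n}(s,t)]/|\Lm^n|$ form an essentially subadditive sequence and therefore converge to a nonnegative constant $\widehat\b_q(s,t)$, which is finite because $\b_q^{\X^n}(s,t)$ is bounded by the total number of elementary $q$-cubes in $\Lm^n$, hence $O(|\Lm^n|)$. The almost sure statement is obtained by combining near additivity with an independence-plus-variance estimate made available by $R$-dependence. The main obstacle is the near additivity bound itself: unlike the ordinary Betti number, the persistent Betti number depends simultaneously on the two scales $s$ and $t$, so the cut-and-paste argument has to be carried out at both levels and for cubes of several adjacent dimensions, and one needs to verify that the correction terms still scale with the boundary volume rather than the bulk volume. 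Once that is in place, the convergence $\b_q^{\X^n}(s,t)/|\Lm^n|\to\widehat\b_q(s,t)$ is a direct application of the abstract SLLN.
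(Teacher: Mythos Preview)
Your approach is essentially the paper's: the near-additivity estimate is Proposition~\ref{prop:PB_diff} (bounding $|\b_q^{\Y}(s,t)-\b_q^{\X}(s,t)|$ for nested $\X\subset\Y$ by the number of $q$- and $(q+1)$-cubes in the set difference at times $s$ and $t$, via the linear-algebra Lemma~\ref{lem:basic_linear}), after which one verifies the hypotheses of the abstract SLLN in Appendix~\ref{sec:LLN_rnap} (the paper actually checks the stronger exponential versions and invokes Corollary~\ref{cor:LDP_ernap}, obtaining Theorems~\ref{thm:LLN_PB} and~\ref{thm:LDP_PBtuple} simultaneously). One detail you glossed over: to obtain \emph{independent} copies $S^{k-r,z}$ as the framework requires, the paper does not use a true partition but translates $\Lm^{k-r,z}=2kz+\Lm^{k-r}$ separated by corridors of width $2r>R$, so the near-additivity defect absorbs the corridor volume $|\Lm^{(2m+1)k}|(1-(1-r/k)^d)$, which is still negligible as $k\to\infty$.
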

\begin{rem}
Since $\b_q^{\X^n}(t,t)=\b_q(X^n(t))$ for every $t\ge0$, Theorem~\ref{thm:LLN_PB} implies the strong law of large number for Betti numbers.
This was first obtained by Hiraoka and Tsunoda~\cite[Theorem~2.8]{HT18} for the upper random cubical filtration model with a slightly general setting, where the probability distribution of $\{u_Q\}_{Q\in\cK^d}$ taken in Example~\ref{eg:upper_lower} is ergodic with the canonical translation on $\Z^d$.
See~\cite[Section~2]{HT18} for more details.
\end{rem}

The next result is the strong law of large numbers for the persistence diagrams of a random cubical filtration satisfying Assumptions~\ref{asm:stationarity} and~\ref{asm:loc_dep}.
For every $n\in\N$ and $q\in\Z$, the mean measure $\E[\xi_q^{\X^n}]$ of $\xi_q^{\X^n}$ is defined by $\E[\xi_q^{\X^n}](A)\coloneqq\E[\xi_q^{\X^n}(A)]$ for any Borel set $A\subset\Dl$.
Since the number of the $q$th birth-death pair of $\X^n$ is bounded above by the number of elementary $q$-cubes in $\Lm^n$, we have
\begin{equation}\label{eq:total_mass}
\xi_q^{\X^n}(\Dl)\le\#\cK^d_q(\Lm^n).
\end{equation}
In particular, $\E[\xi_q^{\X^n}]$ is a Radon measure on $\Dl$.
\begin{thm}\label{thm:LLN_PD}
Let $\X=\{X(t)\}_{t\ge0}$ be a random cubical filtration in $\R^d$ satisfying Assumptions~\ref{asm:stationarity} and~\ref{asm:loc_dep}.
Fix an integer $0\le q<d$.
Then, there exists a Radon measure $\widehat\xi_q$ on $\Dl$ such that
\[
\frac{\E[\xi_q^{\X^n}]}{|\Lm^n|}\xrightarrow[]{\text{v}}\widehat\xi_q\quad\text{as $n\to\infty$.}
\]
Here, $\xrightarrow[]{\text{v}}$ denotes the vague convergence of Radon measures on $\Dl$.
Furthermore,
\[
\frac{\xi_q^{\X^n}}{|\Lm^n|}\xrightarrow[]{\text{v}}\widehat\xi_q\quad\text{almost surely as $n\to\infty$.}
\]
\end{thm}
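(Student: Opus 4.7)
The plan is to lift the strong law of large numbers for persistent Betti numbers (Theorem~\ref{thm:LLN_PB}) to persistence diagrams via the $k$-triangle lemma (Theorem~\ref{thm:mass_quadrant}) and its corollary, mirroring the strategy of~\cite{HST18}. First I would define the candidate limit on the semi-ring of half-open rectangles. For $R=(s_1,s_2]\times(t_1,t_2]\subset\Dl$ with $0\le s_1\le s_2\le t_1\le t_2<\infty$, the Corollary following Theorem~\ref{thm:mass_quadrant} writes $\xi_q^{\X^n}(R)$ as a signed sum of four persistent Betti numbers. Dividing by $|\Lm^n|$ and applying Theorem~\ref{thm:LLN_PB} to each term yields almost sure convergence to
\[
\widehat\xi_q(R):=\widehat\b_q(s_2,t_1)-\widehat\b_q(s_2,t_2)+\widehat\b_q(s_1,t_2)-\widehat\b_q(s_1,t_1)\ge 0,
\]
and the analogous convergence of expectations follows by dominated convergence using the deterministic bound~\eqref{eq:total_mass}. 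Rectangles with $t_2=\infty$ are handled by monotone continuity, recovering $\widehat\xi_q([0,s]\times(t,\infty])=\widehat\b_q(s,t)$ directly from~\eqref{eq:mass_quadrant}.

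Second, I would extend $\widehat\xi_q$ to a Radon measure on $\Dl$. Finite additivity on the semi-ring is inherited from finite additivity of the $\xi_q^{\X^n}$, and local finiteness holds because every compact subset of $\Dl$ sits in some bounded rectangle on which $\widehat\xi_q$ is finite. For $\sigma$-additivity, it suffices to check continuity from above on nested sequences of bounded rectangles shrinking to the empty set, which follows from the monotone pointwise convergence of $\widehat\xi_q$ on smaller rectangles together with the uniform mass bound; Carathéodory extension then produces the Radon measure $\widehat\xi_q$.

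Third, I would lift the rectangle convergence to vague convergence. Fix a countable family $\cR$ of rational rectangles in $\Dl$, rich enough to form a $\pi$-system and to approximate compact sets from outside. Off a single null event, $\xi_q^{\X^n}(R)/|\Lm^n|\to\widehat\xi_q(R)$ simultaneously for all $R\in\cR$. Given $f\in C_c(\Dl)$ with support $K$, approximate $f$ uniformly by a step function built from indicators of rectangles in $\cR$ tiling a neighborhood of $K$; the approximation error in the integral against $\xi_q^{\X^n}/|\Lm^n|$ is controlled by its mass on the boundary strips between consecutive rectangles. Passing to the limit yields the almost sure vague convergence, and taking expectations via dominated convergence delivers the convergence of the mean measure.

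I expect the principal obstacle to be in this last step: pointwise convergence on rectangles does not automatically imply vague convergence, since the boundary of a rectangle can carry mass for $\widehat\xi_q$. The remedy, as in~\cite{HST18}, is to exploit that $\widehat\xi_q$ is locally finite and hence has at most countably many ``charged'' horizontal and vertical lines, so the grid $\cR$ can be chosen to avoid all of them; under this choice the boundary strips shrink to $\widehat\xi_q$-null sets, and~\eqref{eq:total_mass} combined with the rectangle-wise convergence controls their $\xi_q^{\X^n}/|\Lm^n|$-mass uniformly in $n$.
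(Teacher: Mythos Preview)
Your approach is correct and matches the paper's: both feed the persistent Betti number convergence from Theorem~\ref{thm:LLN_PB} through the $k$-triangle relation~\eqref{eq:mass_quadrant} and then lift to vague convergence of persistence diagrams. The only difference is packaging---the paper invokes the lifting as a black box (Theorem~\ref{thm:LLN_PB-PD}, quoted from~\cite[Proposition~3.4 and Corollary~A.3]{HST18}) applied to $\xi^n=\xi_q^{\X^n}/|\Lm^n|$, whereas you sketch the internals of that lemma (Carath\'eodory extension, avoiding charged lines, step-function approximation); your outline is faithful to what~\cite{HST18} actually does.
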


The principal aim in this article is to investigate large deviation principles for the persistence diagrams of a random cubical filtration satisfying Assumptions~\ref{asm:stationarity} and~\ref{asm:loc_dep}.
We start with the definition of large deviation principle in a general setting.
\begin{df}\label{df:LDP}
Let $\{a_n\}_{n\in\N}$ be a sequence of positive numbers tending to infinity as $n\to\infty$.
Let $\cX$ be a Hausdorff topological space equipped with the Borel $\sg$-algebra.
An $\cX$-valued process $\{S^n\}_{n\in\N}$, i.e., a sequence of $\cX$-valued random variables, satisfies a \textit{$($Donsker--Varadhan type$)$ large deviation principle $($LDP$)$} with speed $a_n$ if there exists a lower semicontinuous function $I\colon\cX\to[0,\infty]$ such that
\begin{itemize}
\item for any closed set $F\subset\cX$,
\begin{equation}\label{eq:upper_bound}
\limsup_{n\to\infty}a_n^{-1}\log\P(S^n\in F)\le-\inf_{x\in F}I(x),
\end{equation}
\item for any open set $G\subset\cX$,
\[
\liminf_{n\to\infty}a_n^{-1}\log\P(S^n\in G)\ge-\inf_{x\in G}I(x).
\]
\end{itemize}
The function $I$ is called a \textit{rate function}.
If, furthermore, the sublevel set $\{x\in\cX\mid I(x)\le\a\}$ is compact for every $\a\in[0,\infty)$, then $I$ is called a \textit{good rate function}.
\end{df}
\begin{rem}
By taking $F=\cX$ in~\eqref{eq:upper_bound}, we have $\inf_{x\in\cX}I(x)=0$.
When $I$ is a good rate function, this implies that $I$ has at least one (not necessarily unique) zero point.
\end{rem}
The next result is the LDP for the tuples of the persistent Betti numbers of a random cubical filtration satisfying Assumptions~\ref{asm:stationarity} and~\ref{asm:loc_dep}.
Before stating the result, we provide the basics of the Fenchel--Legendre transform.
In what follows, $\langle\cdot,\cdot\rangle_{\R^h}$ denotes the canonical inner product in $\R^h$, and $\|\cdot\|_{\R^h}$ is its induced norm.
Given a function $\varphi\colon\R^h\to[-\infty,\infty]$, its \textit{Fenchel--Legendre transform} $\varphi^*\colon\R^h\to[-\infty,\infty]$ is defined by
\begin{equation}\label{eq:FL_transform}
\varphi^*(x)\coloneqq\sup_{\lm\in\R^h}\{\langle\lm,x\rangle_{\R^h}-\varphi(\lm)\}
\end{equation}
for any $x\in\R^h$.
Every Fenchel--Legendre transform is convex and lower semicontinuous since it is the supremum of affine functions.
If $\varphi(0)=0$, then we can check that $\varphi^*(x)\in[0,\infty]$ for every $x\in\R^h$ by taking $\lm=0$ in~\eqref{eq:FL_transform}.
In the large deviation theory, the Fenchel--Legendre transform appears as a natural candidate for
rate functions of LDPs in a general setting~(see, e.g., Theorem~4.5.3~(b) in~\cite{DZ10}).
We now state our first LDP result.
\begin{thm}\label{thm:LDP_PBtuple}
Let $\X=\{X(t)\}_{t\ge0}$ be a random cubical filtration in $\R^d$ satisfying Assumptions~\ref{asm:stationarity} and~\ref{asm:loc_dep}.
Fix an integer $0\le q<d$ and a finite family $\cP=\{(s_i,t_i)\}_{i=1}^h$ with $0\le s_i\le t_i<\infty$.
Then, for every $\lm=(\lm_1,\ldots,\lm_h)\in\R^h$, the limit
\[
\varphi_{q,\cP}(\lm)
\coloneqq\lim_{n\to\infty}|\Lm^n|^{-1}\log\E\biggl[\exp\biggl(\sum_{i=1}^h\lm_i\b_q^{\X^n}(s_i,t_i)\biggr)\biggr]
\]
exists in $\R$, and the $\R^h$-valued process
\[
\biggl\{\biggl(\frac{\b_q^{\X^n}(s_1,t_1)}{|\Lm^n|},\frac{\b_q^{\X^n}(s_2,t_2)}{|\Lm^n|},\ldots,\frac{\b_q^{\X^n}(s_h,t_h)}{|\Lm^n|}\biggr)\biggr\}_{n\in\N}
\]
satisfies the LDP with speed $|\Lm^n|$ and good convex rate function $\varphi_{q,\cP}^*\colon\R^h\to[0,\infty]$.
Furthermore, $\varphi_{q,\cP}^*(x)=0$ if and only if $x=(\widehat\b_q(s_1,t_1),\ldots,\widehat\b_q(s_h,t_h))$.
\end{thm}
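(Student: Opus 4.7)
The strategy is to realise the rectangle-indexed vector
\[
T(\Lm) \coloneqq \bigl(\b_q^{\X_\Lm}(s_1,t_1),\,\ldots,\,\b_q^{\X_\Lm}(s_h,t_h)\bigr) \in \R^h,
\]
where $\X_\Lm$ denotes the restriction of $\X$ to $\Lm$, as an exponentially regular nearly additive vector-valued process in the sense of Section~\ref{sec:LDP_ernap}, and then to invoke the general LDP of Theorem~\ref{thm:LDP_ernap}. Stationarity of $T(\cdot)$ under integer translations is immediate from Assumption~\ref{asm:stationarity}, and the deterministic bound $\|T(\Lm)\|_{\R^h}\le C_d\,\#\cK^d_q(\Lm)$ coming from~\eqref{eq:total_mass} is what ultimately yields the finiteness of $\varphi_{q,\cP}(\lm)$ on all of $\R^h$.

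The main technical hurdle, carried out in Section~\ref{sec:LDP_PB}, is establishing exponentially strong near-additivity. Deterministically, for any partition $\Lm=\bigsqcup_{j=1}^k\Lm_j$ into sub-boxes,
\[
\Bigl|\b_q^{\X_\Lm}(s_i,t_i) - \sum_{j=1}^{k}\b_q^{\X_{\Lm_j}}(s_i,t_i)\Bigr| \le C_d\cdot N_{\del}(\Lm_1,\ldots,\Lm_k),
\]
where $N_\del$ counts elementary cubes meeting the separating hyperplanes; this is a Mayer--Vietoris computation on the quotient~\eqref{eq:PB}, generalising the additivity estimates for ordinary Betti numbers in~\cite{HT18} and for persistent Betti numbers of geometric complexes in~\cite{HST18}. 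The right-hand side is of lower order than $|\Lm|$ for balanced partitions, but to reach an \emph{exponentially good} bound in the sense required by Theorem~\ref{thm:LDP_ernap} we exploit Assumption~\ref{asm:loc_dep}: the boundary is a thin tube of controlled thickness, and a coarse-graining into $R$-separated blocks on a sparsified sub-lattice reduces the exponential moment estimate to the moment generating function of an independent sum of deterministically bounded contributions. I expect this coarse-graining step to be the most delicate point of the whole argument.

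Once Theorem~\ref{thm:LDP_ernap} is applicable, we obtain the existence of $\varphi_{q,\cP}(\lm)\in\R$ for every $\lm\in\R^h$ together with the LDP for $|\Lm^n|^{-1}T(\Lm^n)$ at speed $|\Lm^n|$ and rate function $\varphi_{q,\cP}^*$; convexity and lower semicontinuity of $\varphi_{q,\cP}^*$ are automatic from~\eqref{eq:FL_transform}, and goodness follows from the superlinear growth of $\varphi_{q,\cP}^*$ induced by the bound $\varphi_{q,\cP}(\lm)\le C_d\|\lm\|_{\R^h}$ inherited from the deterministic control of $T(\Lm)$.

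For the zero-set characterisation, Theorem~\ref{thm:LLN_PB} supplies $|\Lm^n|^{-1}\E[T(\Lm^n)]\to\widehat{\mathbf b}\coloneqq(\widehat\b_q(s_1,t_1),\ldots,\widehat\b_q(s_h,t_h))$. Combining this mean convergence with the uniform exponential moment bounds underpinning Theorem~\ref{thm:LDP_ernap} lets us exchange limit and gradient in the convex function $\varphi_{q,\cP}$, yielding $\nabla\varphi_{q,\cP}(0)=\widehat{\mathbf b}$; by convex duality this already gives $\varphi_{q,\cP}^*(\widehat{\mathbf b})=0$. Uniqueness of the zero follows from the general identity $\{\varphi_{q,\cP}^*=0\}=\del\varphi_{q,\cP}(0)$ for the Fenchel--Legendre transform of a finite convex function, once differentiability of $\varphi_{q,\cP}$ at $0$ is established; this last point I expect to follow from computing one-sided directional derivatives using the almost-sure convergence in Theorem~\ref{thm:LLN_PB} together with the exponential integrability.
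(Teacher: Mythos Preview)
Your overall strategy---verify that $S^n\coloneqq(\b_q^{\X^n}(s_1,t_1),\ldots,\b_q^{\X^n}(s_h,t_h))$ is exponentially regular and exponentially nearly additive, then invoke Theorem~\ref{thm:LDP_ernap} (or rather Corollary~\ref{cor:LDP_ernap})---is exactly the paper's approach. However, you misidentify both the location and the nature of the difficulty.

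The near-additivity and regularity estimates are \emph{purely deterministic}. Proposition~\ref{prop:PB_diff} together with Lemma~\ref{lem:no_cube-vol} gives, for the partition into $r$-shrunk translates $\widetilde\Lm^{(2m+1)k}=\bigsqcup_{z}\Lm^{k-r,z}\subset\Lm^{(2m+1)k}$, the pointwise bound
\[
\biggl\|S^{(2m+1)k}-\sum_{z}S^{k-r,z}\biggr\|_{\R^h}
\le 3^d\sqrt h\bigl(|\Lm^{(2m+1)k}|-|\widetilde\Lm^{(2m+1)k}|\bigr)
\le 3^d\sqrt h\Bigl(1-\bigl(1-\tfrac rk\bigr)^d\Bigr)|\Lm^{(2m+1)k}|,
\]
valid for \emph{every} realisation. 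Once $k$ is large enough the right-hand side is below $\eps|\Lm^{(2m+1)k}|$, so the probability in Definition~\ref{df:enap} is literally zero; no exponential moment estimate, coarse-graining, or concentration argument is needed. The same holds for exponential regularity. Assumption~\ref{asm:loc_dep} enters \emph{only} to guarantee that the translated copies $\{S^{k-r,z}\}_{z\in\Z^d}$ are independent (the $\Lm^{k-r,z}$ are constructed with mutual distance $2r>R$), which is the first bullet of Definition~\ref{df:enap}; it plays no role in bounding the additive defect. So what you call ``the most delicate point'' is in fact the most routine one.

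For the zero-set characterisation, the paper does not go through differentiability of $\varphi_{q,\cP}$ at the origin. Instead, uniqueness of the zero is built into Corollary~\ref{cor:LDP_ernap} via Lemma~\ref{lem:zero_point}, which compares $I$ with the Cram\'er rate functions $J_k$ of $S^k/|\Lm^k|$ and uses that the unique zero of $J_k$ is $\E[S^k]/|\Lm^k|\to\widehat S$. Your proposed route via $\nabla\varphi_{q,\cP}(0)$ could be made to work, but you would still have to prove differentiability at $0$, which is not automatic from exponential integrability and almost-sure convergence alone; the paper's argument sidesteps this.
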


Next, we state the LDP for the persistence diagrams of a random cubical filtration satisfying Assumptions~\ref{asm:stationarity} and~\ref{asm:loc_dep}.
\begin{thm}\label{thm:LDP_PD}
Let $\X=\{X(t)\}_{t\ge0}$ be a random cubical filtration in $\R^d$ satisfying Assumptions~\ref{asm:stationarity} and~\ref{asm:loc_dep}.
Fix an integer $0\le q<d$.
Then, for every $f\in C_c(\Dl)$, the limit
\[
\varphi_q(f)\coloneqq\lim_{n\to\infty}|\Lm^n|^{-1}\log\E\biggl[\exp\biggl(\int_\Dl f\,d\xi_q^{\X^n}\biggr)\biggr]
\]
exists in $\R$, and the $\cM(\Dl)$-valued process $\{\xi_q^{\X^n}/|\Lm^n|\}_{n\in\N}$ satisfies the LDP with speed $|\Lm^n|$ and a good convex rate function $I_q\colon \cM(\Dl)\to[0,\infty]$ defined by
\begin{equation}\label{eq:LDP_PD}
I_q(\xi)\coloneqq\sup_{f\in C_c(\Dl)}\biggl\{\int_\Dl f\,d\xi-\varphi_q(f)\biggr\}
\end{equation}
for any $\xi\in\cM(\Dl)$.
Furthermore, $I_q(\xi)=0$ if and only if $\xi=\widehat\xi_q$.
\end{thm}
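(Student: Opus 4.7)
The plan is to derive the theorem by applying the general lifting result (Theorem \ref{thm:LDP_PBtuple-PD}) to the LDP for tuples of persistent Betti numbers (Theorem \ref{thm:LDP_PBtuple}). The essential bridge is the corollary following Theorem \ref{thm:mass_quadrant}, which expresses $\xi_q^{\X^n}((s_1, s_2] \times (t_1, t_2])$ as a signed sum of four persistent Betti numbers, so that the mass placed on any half-open rectangle is an affine function of finitely many persistent Betti numbers.

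Concretely, for each $m, M \in \N$, I would define a projection $\pi_{m,M} \colon \cM(\Dl) \to \cM(\Dl)$ that restricts the input measure to $[0,M]^2 \cap \Dl$ and replaces its mass on each grid cell of side $1/m$ by a point mass at a chosen representative. By the identity above, $\pi_{m,M}(\xi_q^{\X^n}/|\Lm^n|)$ is a continuous (indeed affine) function of the tuple of persistent Betti numbers at grid-aligned times, so the contraction principle combined with Theorem \ref{thm:LDP_PBtuple} yields an LDP for each approximation with a good convex rate function. Furthermore, the deterministic mass bound (\ref{eq:total_mass}) together with the uniform continuity of any $f \in C_c(\Dl)$ on its compact support gives a pointwise-deterministic estimate
\[
\biggl| \int_\Dl f \, d \pi_{m,M}\Bigl(\frac{\xi_q^{\X^n}}{|\Lm^n|}\Bigr) - \int_\Dl f \, d \Bigl(\frac{\xi_q^{\X^n}}{|\Lm^n|}\Bigr) \biggr| \lesssim \om_f(1/m)
\]
as soon as $M$ covers $\supp f$; this is far stronger than the exponentially good approximation required by Theorem \ref{thm:LDP_PBtuple-PD}, so the LDP transfers to $\{\xi_q^{\X^n}/|\Lm^n|\}_{n \in \N}$ with some good convex rate function $I_q$.

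To identify $I_q$, I would first show that $\varphi_q(f)$ exists for every $f \in C_c(\Dl)$ by uniformly approximating $f$ by step functions on a fine grid, controlling the exponential moments of the step-function integrals via Theorem \ref{thm:LDP_PBtuple} and controlling the error via the uniform-continuity estimate above. A Varadhan-type computation (applicable because the lifted rate function is good and because exponential moments of bounded test functions are uniformly controlled through Theorem \ref{thm:LDP_PBtuple}) then yields $\varphi_q(f) = \sup_\xi\{\int_\Dl f \, d\xi - I_q(\xi)\}$; convexity and lower semicontinuity of $I_q$, inherited through the contraction principle from the convex good rate functions $\varphi_{q,\cP}^*$, together with Fenchel biduality, give $I_q = \varphi_q^*$ as in (\ref{eq:LDP_PD}). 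The characterization of the zero set follows immediately from the strong law of large numbers (Theorem \ref{thm:LLN_PD}) and the uniqueness of the zero of a good convex rate function whose corresponding process satisfies a weak law at $\widehat \xi_q$.

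The main obstacle is the non-metrizability of $\cM(\Dl)$ in the vague topology, since the classical Dembo--Zeitouni formulation of exponentially good approximations is stated in metric spaces, and Varadhan's lemma for $\int f \, d\xi$ requires careful verification of the tail condition on $\cM(\Dl)$. The remedy, presumably contained in Appendix \ref{sec:LDP_rdm_meas}, is either to restrict the state space to the metrizable subspace of Radon measures of uniformly bounded mass on each compact set (which suffices here by (\ref{eq:total_mass})) or to invoke a log-MGF-based sufficient criterion for random-measure LDPs that sidesteps the metric structure entirely.
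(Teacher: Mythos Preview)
Your opening sentence is exactly the paper's proof: verify the hypotheses of Theorem~\ref{thm:LDP_PBtuple-PD} and apply it. The paper's argument occupies only a short paragraph --- use Theorem~\ref{thm:mass_quadrant} (not its corollary) to identify $\xi_q^{\X^n}([0,s]\times(t,\infty])$ directly with $\b_q^{\X^n}(s,t)$, invoke Theorem~\ref{thm:LDP_PBtuple} for the tuple LDP with good convex rate functions having unique zeros, check the total-mass bound~\eqref{eq:total_mass} to obtain both condition~\eqref{eq:LDP_PBtuple-PD} and the uniform bound $\xi_q^{\X^n}(\Dl)/|\Lm^n|\le 3^d$ needed for Statement~(1), and finally identify the unique zero as $\widehat\xi_q$ via Theorem~\ref{thm:LLN_PD}. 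Everything you write after your first paragraph is in effect a re-derivation of Theorem~\ref{thm:LDP_PBtuple-PD} itself, which is proved separately in Section~\ref{sec:LDP_PDs}.

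Your sketch of that internal argument is close in spirit to the paper's but differs technically. You propose to discretize via $\pi_{m,M}$ and run the exponentially good approximation \emph{inside} $\cM(\Dl)$; the paper instead pushes everything down to finite-dimensional Euclidean spaces --- first histograms $\hist_l(\xi^n)\in\R^{\cI_l}$ via the contraction principle (Lemma~\ref{lem:LDP_PB-PH}), then tuples $(\xi^nf_1,\ldots,\xi^nf_m)\in\R^m$ via exponentially good approximation in $\R^m$ (Lemma~\ref{lem:fdim_distr}), and only at the end lifts to $\cM(\Dl)$ via the Dawson--G\"artner projective-limit machinery of Appendix~\ref{sec:LDP_rdm_meas}. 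Your worry about non-metrizability is unfounded: $\cM(\Dl)$ with the vague topology is Polish (the paper cites \cite[Lemma~4.6]{Kal17}), so the Dembo--Zeitouni exponentially-good-approximation theorem is available there. The paper avoids that route not for metrizability reasons but because the finite-dimensional decomposition makes the convexity and unique-zero bookkeeping transparent at each stage.

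One genuine gap: your claim that uniqueness of the zero of $I_q$ follows from ``the strong law of large numbers and the uniqueness of the zero of a good convex rate function whose corresponding process satisfies a weak law'' is not valid in general. Convergence in probability to $\widehat\xi_q$ only forces $I_q(\widehat\xi_q)=0$; it does not exclude other zeros, since the LDP lower bound at another zero merely says the probability of visiting its neighborhood decays subexponentially, which is compatible with a weak law. The paper obtains uniqueness by propagating it from the finite-dimensional level: Theorem~\ref{thm:LDP_PBtuple} gives a unique zero for each $\varphi_{q,\cP}^*$, and Statement~(1) of Theorem~\ref{thm:LDP_PBtuple-PD} (which requires the deterministic bound $\xi^n(\Dl)\le K$, available here) carries this through the approximation chain via Remarks~\ref{rem:contraction}(1) and~\ref{rem:EGA}(1).
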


\section{Large deviation principle for exponentially regular nearly additive processes}\label{sec:LDP_ernap}
In this section, we develop a general LDP result for exponentially regular nearly additive vector-valued processes, which is crucial for the proof of Theorem~\ref{thm:LDP_PBtuple} in Section~\ref{sec:LDP_PB}.
In Subsection~\ref{ssec:stmt_rslt}, we define the notions of exponentially near additivity and exponential regularity, and state the general LDP result (Theorem~\ref{thm:LDP_ernap}).
Subsection~\ref{ssec:pf_LDP_ernap} presents the proof of Theorem~\ref{thm:LDP_ernap}.
The proofs of technical lemmas needed in the proof of Theorem~\ref{thm:LDP_ernap} is deferred to Subsection~\ref{ssec:pf_3lemmas}.

\subsection{Statement of result}\label{ssec:stmt_rslt}
Throughout this section, we fix $h\in\N$, and consider an $\R^h$-valued process $\{S^n\}_{n\in\N}$, i.e., a sequence of $\R^h$-valued random variables.
In the applications dealt with in Section~\ref{sec:LDP_PB}, $S^n$ is taken to be a random vector associated to the rectangular region $\Lm^n=[-n,n]^d$, and $|\Lm^n|$ indicates its $d$-dimensional Lebesgue measure.
However, we here regard $S^n$ and $|\Lm^n|\coloneqq(2n)^d$ as just a random vector indexed by $n$ and a scaling factor, respectively.

The following notions are crucial for stating the main theorem in this section (Theorem~\ref{thm:LDP_ernap}).
\begin{df}\label{df:enap}
Let $r\ge0$ be an integer.
We say that an $\R^h$-valued process $\{S^n\}_{n\in\N}$ is \textit{exponentially $r$-nearly additive} if there exist $\R^h$-valued random variables $\{S^{n,z}\}_{n\in\N,z\in\Z^d}$ such that the following conditions are satisfied:
\begin{itemize}
\item $\{S^{n,z}\}_{z\in\Z^d}$ are independent copies of $S^n$ for every $n\in\N$; 
\item for any $\eps>0$ and $C>0$, there exists an integer $K>r$ such that
\[
\P\biggl(\biggl\|S^{(2m+1)k}-\sum_{z\in\Z^d\cap[-m,m]^d}S^{k-r,z}\biggr\|_{\R^h}>\eps|\Lm^{(2m+1)k}|\biggr)\le\exp(-C|\Lm^{(2m+1)k}|)
\]
for all $k\ge K$ and $m\in\N$.
\end{itemize}
We also say that an $\R^h$-valued process $\{S^n\}_{n\in\N}$ is \textit{exponentially nearly additive} if there exists an integer $r\ge0$ such that $\{S^n\}_{n\in\N}$ is exponentially $r$-nearly additive.
\end{df}
\begin{rem}
The exponentially near additivity in Definition~\ref{df:enap} with $h=1$ and $r=0$ corresponds to the definition of the near additivity in~Assumption~2.1 of~\cite{SY01}.
\end{rem}
\begin{df}
We say that an $\R^h$-valued process $\{S^n\}_{n\in\N}$ is \textit{exponentially regular} if the following property holds for each fixed $k\in\N$:
if $m_n$ is taken as the unique integer satisfying that $(2m_n+1)k\le n<(2m_n+3)k$ for each $n\in\N$, then for any $\eps>0$ and $C>0$, there exists $N\in\N$ such that
\[
\P(\|S^n-S^{(2m_n+1)k}\|_{\R^h}>\eps|\Lm^n|)\le\exp(-C|\Lm^n|)
\]
for all $n\ge N$.
\end{df}

Let $\{S^n\}_{n\in\N}$ be an exponentially regular nearly additive $\R^h$-valued process consisting of integrable random variables.
If $\sup_{n\in\N}\|\E[S^n]\|_{\R^h}/|\Lm^n|<\infty$, then $\{S^n\}_{n\in\N}$ satisfies a strong law of large numbers, i.e., the limit
\begin{equation}\label{eq:lim_ESn}
\widehat S\coloneqq\lim_{n\to\infty}\frac{\E[S^n]}{|\Lm^n|}
\end{equation}
exists in $\R^h$, and
\[
\frac{S^n}{|\Lm^n|}\to\widehat S\quad\text{almost surely as $n\to\infty$.}
\]
See Appendix~\ref{sec:LLN_rnap} for the proof under a weaker assumption.

The following is a large deviation principle for exponentially regular nearly additive processes.
\begin{thm}\label{thm:LDP_ernap}
Let $\{S^n\}_{n\in\N}$ be an exponentially regular nearly additive $\R^h$-valued process satisfying that
\begin{equation}\label{eq:LDP_ernap_0}
\sup_{n\in\N}|\Lm^n|^{-1}\log\E[\exp(\langle\lm,S^n\rangle_{\R^h})]<\infty
\end{equation}
for any $\lm\in\R^h$.
Then, for every $\lm\in\R^h$, the limit
\[
\varphi(\lm)\coloneqq\lim_{n\to\infty}|\Lm^n|^{-1}\log\E[\exp(\langle\lm,S^n\rangle_{\R^h})]
\]
exists in $\R$, and the $\R^h$-valued process $\{S^n/|\Lm^n|\}_{n\in\N}$ satisfies the LDP with speed $|\Lm^n|$ and good convex rate function $\varphi^*\colon\R^h\to[0,\infty]$.
Furthermore, $\varphi^*(x)=0$ if and only if $x=\widehat S$, defined by~\eqref{eq:lim_ESn}.
\end{thm}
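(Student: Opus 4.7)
The plan is to adapt the Sepp\"al\"ainen--Yukich~\cite{SY01} argument to the vector-valued and exponentially nearly additive setting. In order: (i) show that the normalized logarithmic moment generating functions $\psi_n(\lm) := |\Lm^n|^{-1}\log\E[\exp\la\lm,S^n\ra_{\R^h}]$ converge to a finite convex limit $\varphi(\lm)$; (ii) establish the LDP by combining a Chebyshev upper bound with an exponentially-good-approximation argument in which $\sum_z S^{k-r,z}$ plays the role of the approximating sequence, whose LDP is delivered by Cram\'er's theorem for iid sums in $\R^h$; (iii) identify the unique zero of $\varphi^*$ using the strong law of large numbers from Appendix~\ref{sec:LLN_rnap} together with convex-analytic facts.

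\textbf{Existence and properties of $\varphi$.} Each $\psi_n$ is convex on $\R^h$ (H\"older) and locally bounded by hypothesis~\eqref{eq:LDP_ernap_0}. First, exponential regularity lets me replace $S^n$ by $S^{(2m_n+1)k}$ at the log-MGF level: splitting $\E[\exp\la\lm,S^n\ra_{\R^h}]$ over the event $\{\|S^n-S^{(2m_n+1)k}\|_{\R^h}\le\eps|\Lm^n|\}$ and applying Cauchy--Schwarz on the complement together with the uniform moment bound shows $\psi_n(\lm)-\psi_{(2m_n+1)k}(\lm)\to 0$. A parallel argument using exponential near additivity, with $E_{m,k} := S^{(2m+1)k}-\sum_{z\in\Z^d\cap[-m,m]^d}S^{k-r,z}$ playing the role of the small error, yields
\[
\psi_{(2m+1)k}(\lm)=\frac{(2m+1)^d|\Lm^{k-r}|}{|\Lm^{(2m+1)k}|}\psi_{k-r}(\lm)+o_m(1),
\]
with the error also vanishing as $\eps\to 0$. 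Taking $m\to\infty$ produces convergence of $\psi_{(2m+1)k}(\lm)$ to a limit close to $\psi_{k-r}(\lm)$ (since $(2m+1)^d|\Lm^{k-r}|/|\Lm^{(2m+1)k}|=|\Lm^{k-r}|/|\Lm^k|\to 1$ as $k\to\infty$); then sending $k\to\infty$ and $\eps\to 0$ shows $\{\psi_n(\lm)\}$ is Cauchy. The limit $\varphi$ is convex (pointwise limit of convex functions), finite, and hence continuous on $\R^h$.

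\textbf{The LDP.} The upper bound on compact sets follows from Chebyshev's inequality and continuity of $\varphi$; exponential tightness is immediate from~\eqref{eq:LDP_ernap_0} applied coordinate-wise, extending the upper bound to all closed sets. The lower bound is the principal obstacle because $\varphi$ is not assumed differentiable, so G\"artner--Ellis cannot be invoked directly. Instead, for each fixed $k$ the iid sum $T^{m,k}:=\sum_{z\in\Z^d\cap[-m,m]^d}S^{k-r,z}$ obeys Cram\'er's theorem, so $T^{m,k}/|\Lm^{(2m+1)k}|$ satisfies an LDP as $m\to\infty$ with rate function determined explicitly by $\psi_{k-r}$. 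Exponential near additivity is precisely the statement that $T^{m,k}/|\Lm^{(2m+1)k}|$ and $S^{(2m+1)k}/|\Lm^{(2m+1)k}|$ are exponentially equivalent, so the standard exponentially-good-approximation lemma (see~\cite{DZ10}) transfers the LDP; exponential regularity then carries this from the block subsequence to the full index. Sending $k\to\infty$, convergence of the tilted log MGFs together with Fenchel--Legendre duality for convex functions shows that the $k$-dependent rate functions converge (in the sense needed for LDPs) to $\varphi^*$, yielding the LDP with rate function $\varphi^*$.

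\textbf{Zero of the rate function.} Since $\psi_n(0)=0$ for every $n$, $\varphi(0)=0$ and so $\inf\varphi^*=0$. By the strong law from Appendix~\ref{sec:LLN_rnap}, $S^n/|\Lm^n|\to\widehat S$ almost surely, hence the LDP lower bound applied to arbitrarily small open neighborhoods of $\widehat S$ forces $\varphi^*(\widehat S)=0$. For uniqueness, $\nabla\psi_n(0)=\E[S^n]/|\Lm^n|\to\widehat S$; as the $\psi_n$ are convex with pointwise limit $\varphi$, a standard convex-analytic argument gives $\partial\varphi(0)=\{\widehat S\}$, so any $x$ with $\varphi^*(x)=0$ must equal $\widehat S$. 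The technical heart of the argument is the existence step: lifting the exponentially-small-in-probability error from near additivity to $o(1)$ control at the log-MGF level, which requires delicate use of Cauchy--Schwarz together with the uniform moment bound~\eqref{eq:LDP_ernap_0}.
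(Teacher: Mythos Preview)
Your overall plan---Cram\'er for i.i.d.\ block sums plus exponential approximation---is a reasonable alternative to the paper's route (which uses the Bryc-type Theorem~4.4.10 of~\cite{DZ10} with the well-separating class of Lipschitz \emph{concave} functions, then proves convexity separately via Lemma~\ref{lem:convexity}, then applies Theorem~4.5.10). However, two steps in your outline are not correct as written.

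\textbf{The LDP step.} Your sentence ``exponential near additivity is precisely the statement that $T^{m,k}/|\Lm^{(2m+1)k}|$ and $S^{(2m+1)k}/|\Lm^{(2m+1)k}|$ are exponentially equivalent'' is false for \emph{fixed} $k$: Definition~\ref{df:enap} only guarantees the bound once $k\ge K(\eps,C)$, so for a given $k$ you cannot push $C\to\infty$ for every $\eps$. Consequently you do \emph{not} obtain an LDP for $S^{(2m+1)k}/|\Lm^{(2m+1)k}|$ at fixed $k$, and the subsequent ``send $k\to\infty$'' step is undefined. What does work is to treat $k$ as the approximation parameter in the exponentially-good-approximation framework (Theorem~4.2.16 of~\cite{DZ10}): set $Z^{n,k}=T^{m_n,k}/|\Lm^n|$, check that $\{Z^{n,k}\}_{n,k}$ is an exponentially good approximation of $S^n/|\Lm^n|$ (this is exactly what regularity and near additivity combine to give), and use Cram\'er for each fixed $k$ to supply the family of LDPs. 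That yields an LDP with a good rate $I$; convexity of each Cram\'er rate then gives convexity of $I$, and Theorem~4.5.10 of~\cite{DZ10} identifies $I=\varphi^*$. Your phrase ``rate functions converge in the sense needed for LDPs'' is not a substitute for this argument.

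\textbf{Uniqueness of the zero.} The assertion $\partial\varphi(0)=\{\widehat S\}$ is unjustified. Pointwise convergence of differentiable convex $\psi_n$ with $\nabla\psi_n(0)\to\widehat S$ only gives $\widehat S\in\partial\varphi(0)$; it does \emph{not} force $\varphi$ to be differentiable at $0$ (e.g.\ $\psi_n(\lm)=\sqrt{\lm^2+1/n}-1/\sqrt n\to|\lm|$ with $\psi_n'(0)=0$). Since $\{x:\varphi^*(x)=0\}=\partial\varphi(0)$, this is exactly the uniqueness you are trying to prove, so the argument is circular. The SLLN alone does not help either: $\P(S^n/|\Lm^n|\in B(x,\dl))\to0$ is compatible with $\liminf|\Lm^n|^{-1}\log\P(\cdots)\ge0$. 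The paper closes this gap in Lemma~\ref{lem:zero_point} by a different route: from $I(x)=0$ it deduces $\inf_{\bar B(x,\dl)}J_{k-r}=0$ for large $k$, hence $\E[S^{k-r}]/|\Lm^{k-r}|\in\bar B(x,\dl)$, hence $x=\widehat S$.
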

\begin{rem}
In the above theorem, we note that if~\eqref{eq:LDP_ernap_0} holds for any $\lm\in\R^h$, then each $S^n$ is integrable and $\sup_{n\in\N}\|\E[S^n]\|_{\R^h}/|\Lm^n|<\infty$ holds from an elementary calculation.
Hence, the limit $\widehat S$ defined in~\eqref{eq:lim_ESn} exists.
\end{rem}
\begin{rem}
The above theorem can be regarded as a generalization of Theorem~2.1 in~\cite{SY01}, where exponentially regular $0$-nearly additive real-valued processes are considered.
\end{rem}
Combining Theorem~\ref{thm:LDP_ernap} and the preceding discussion on the strong law of large numbers, we immediately obtain the following useful corollary.
\begin{cor}\label{cor:LDP_ernap}
Let $\{S^n\}_{n\in\N}$ be an exponentially regular nearly additive $\R^h$-valued process satisfying that
\[
\sup_{n\in\N}|\Lm^n|^{-1}\log\E[\exp(\langle\lm,S^n\rangle_{\R^h})]<\infty
\]
for any $\lm\in\R^h$.
Then, the following statements hold.
\begin{enumerate}
\item The limit
\[
\widehat S\coloneqq\lim_{n\to\infty}\frac{\E[S^n]}{|\Lm^n|}
\]
exists in $\R^h$, and
\[
\frac{S^n}{|\Lm^n|}\to\widehat S\quad\text{almost surely as $n\to\infty$.}
\]
\item For every $\lm\in\R^h$, the limit
\[
\varphi(\lm)\coloneqq\lim_{n\to\infty}|\Lm^n|^{-1}\log\E[\exp(\langle\lm,S^n\rangle_{\R^h})]
\]
exists in $\R$, and the $\R^h$-valued process $\{S^n/|\Lm^n|\}_{n\in\N}$ satisfies the LDP with speed $|\Lm^n|$ and good convex rate function $\varphi^*\colon\R^h\to[0,\infty]$.
Furthermore, $\varphi^*(x)=0$ if and only if $x=\widehat S$.
\end{enumerate}
\end{cor}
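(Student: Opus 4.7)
The plan is to observe that the corollary is essentially a repackaging: part~(2) is literally the statement of Theorem~\ref{thm:LDP_ernap}, and part~(1) follows from the strong law of large numbers that is stated just before Theorem~\ref{thm:LDP_ernap} (and proved, under a weaker hypothesis, in Appendix~\ref{sec:LLN_rnap}). So the only substantive task is to verify that the single hypothesis of the corollary implies the hypotheses of both results.

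First I would extract from~\eqref{eq:LDP_ernap_0}, applied with $\lm = \pm e_i$ for each standard basis vector $e_i$ of $\R^h$, the bound
\[
\sup_{n\in\N}|\Lm^n|^{-1}\log\E[\exp(\pm S^n_i)]<\infty,
\]
where $S^n_i$ denotes the $i$th coordinate of $S^n$. In particular $\E[\exp(\pm S^n_i)]<\infty$ for every $n$ and $i$, so each $S^n$ is integrable. Then the elementary inequality $|x|\le e^x+e^{-x}$ together with Jensen's inequality gives
\[
\frac{|\E[S^n_i]|}{|\Lm^n|}\le\frac{\log\E[e^{S^n_i}]+\log\E[e^{-S^n_i}]}{|\Lm^n|},
\]
which is uniformly bounded in $n$ by the previous display. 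Taking the max over $i$ shows $\sup_{n\in\N}\|\E[S^n]\|_{\R^h}/|\Lm^n|<\infty$, which is exactly the integrability and mean-boundedness assumption required by the SLLN discussion preceding Theorem~\ref{thm:LDP_ernap}. Combined with the exponential regularity and exponentially near additivity of $\{S^n\}_{n\in\N}$ (which, as flagged in Remark~\ref{rem:Exp-STG}, are stronger than the strong versions under which the SLLN is proved in Appendix~\ref{sec:LLN_rnap}), this yields the existence of $\widehat S\in\R^h$ and the almost sure convergence $S^n/|\Lm^n|\to\widehat S$ claimed in~(1).

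Part~(2) is then a direct citation of Theorem~\ref{thm:LDP_ernap}: the existence of $\varphi(\lm)\in\R$ for every $\lm\in\R^h$ and the LDP with speed $|\Lm^n|$ and good convex rate function $\varphi^*$ all come from that theorem, as does the characterization of the zero set of $\varphi^*$. There is no main obstacle here; the corollary is purely a convenience statement, and the only mildly nontrivial step is the Jensen-type deduction above that turns the log-MGF hypothesis into the integrability and first-moment bound needed to activate the SLLN machinery.
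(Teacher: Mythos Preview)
Your proposal is correct and matches the paper's approach exactly: the paper states the corollary as an immediate combination of Theorem~\ref{thm:LDP_ernap} with the strong law of large numbers discussed just before it, and the accompanying remark explicitly notes that~\eqref{eq:LDP_ernap_0} yields the integrability and first-moment bound ``from an elementary calculation,'' which is precisely the Jensen-type step you carry out.
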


\subsection{Proof of Theorem~\ref{thm:LDP_ernap}}\label{ssec:pf_LDP_ernap}
The proof of Theorem~\ref{thm:LDP_ernap} relies mainly on two theorems in the large deviation theory.
The first one shows the existence of an LDP for the $\R^h$-valued process $\{S^n/|\Lm^n|\}_{n\in\N}$ with a (not necessarily convex) rate function $I\colon\R^h\to[0,\infty]$ (Theorem~\ref{thm:DZ_4.4.10}).
The second one guarantees that if the rate function $I$ is convex, then $I$ is given as the Fenchel--Legendre transform of the limiting logarithmic moment generating function (Theorem~\ref{thm:DZ_4.5.10}).

For these theorems, we first review basic notions.
An $\R^h$-valued process $\{S^n\}_{n\in\N}$ is said to be \textit{exponentially tight} (with speed $|\Lm^n|$) if for any $\a>0$, there exists a compact set $K\subset\R^h$ such that
\[
\limsup_{n\to\infty}|\Lm^n|^{-1}\log\P(S^n\notin K)\le-\a.
\]
A function $I\colon\R^h\to[0,\infty]$ is said to be \textit{convex} if for any $x_1,x_2\in\R^h$ and $t\in(0,1)$, it holds that
\begin{equation}\label{eq:convex}
tI(x_1)+(1-t)I(x_2)\ge I(tx_1+(1-t)x_2).
\end{equation}
In what follows in this section, we use the following notation: for any Borel function $f\colon\R^h\to\R$ and $n\in\N$,
\[
\Gm_n(f)\coloneqq\log\E\biggl[\exp\biggl(|\Lm^n|f\biggl(\frac{S^n}{|\Lm^n|}\biggr)\biggr)\biggr]\in(-\infty,\infty].
\]
Furthermore, let $\sF(\R^h)$ be the class of Lipschitz continuous and concave real-valued functions on $\R^h$.
Here, a real-valued function $f$ on $\R^h$ is said to be \textit{concave} if $-f$ satisfies~\eqref{eq:convex} for any $x_1,x_2\in\R^h$ and $t\in(0,1)$.
The class $\sF(\R^h)$ is well-separating in the sense that
\begin{itemize}
\item $\sF(\R^h)$ contains the constant functions,
\item $\sF(\R^h)$ is closed under finite pointwise minima, i.e., $f_1,f_2\in\sF(\R^h)$ implies $f_1\wedge f_2\in\sF(\R^h)$,
\item $\sF(\R^h)$ separates points in $\R^h$, i.e., for any two points $x\neq y$ in $\R^h$ and $a,b\in\R$, there exists a function $f\in\sF(\R^h)$ such that both $f(x)=a$ and $f(y)=b$ hold.
\end{itemize}
The following theorem is a special case of Theorem 4.4.10 in~\cite{DZ10} with the state space $\R^h$ and the well-separating class $\sF(\R^h)$.
\begin{thm}[{\cite[Theorem 4.4.10]{DZ10}}]\label{thm:DZ_4.4.10}
Let $\{S^n\}_{n\in\N}$ be an $\R^h$-valued process.
Suppose that the $\R^h$-valued process $\{S^n/|\Lm^n|\}_{n\in\N}$ is exponentially tight, and that the limit $\lim_{n\to\infty}|\Lm^n|^{-1}\Gm_n(f)$ exists in $[-\infty,\infty)$ for any $f\in\sF(\R^h)$.
Then, for every $f\in C_b(\R^h)$, the limit
\[
\Gm(f)\coloneqq\lim_{n\to\infty}|\Lm^n|^{-1}\Gm_n(f)
\]
also exists in $\R$, and the $\R^h$-valued process $\{S^n/|\Lm^n|\}_{n\in\N}$ satisfies the LDP with speed $|\Lm^n|$ and a good rate function $I\colon\R^h\to[0,\infty]$ defined by
\begin{equation}\label{eq:Cb-type_FL}
I(x)\coloneqq\sup_{f\in C_b(\R^h)}\{f(x)-\Gm(f)\}.
\end{equation}
\end{thm}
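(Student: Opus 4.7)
The plan is to follow the Puhalskii--Bryc scheme. Exponential tightness will supply subsequential LDPs through a compactness argument, and the assumed convergence of $|\Lm^n|^{-1}\Gm_n(f)$ on the well-separating class $\sF(\R^h)$ will pin down the limiting rate function uniquely, so that all subsequential LDPs coincide and yield a single LDP for the full sequence; the extension from $\sF(\R^h)$ to $C_b(\R^h)$ will then follow from Varadhan's lemma.

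First, by Puhalskii's compactness theorem, exponential tightness ensures that every subsequence of $\{S^n/|\Lm^n|\}_{n\in\N}$ admits a further subsequence satisfying an LDP with speed $|\Lm^n|$ and some good rate function $J$. Along such a sub-subsequence I would apply Varadhan's integral lemma to each $f\in\sF(\R^h)$. Since $cf\in\sF(\R^h)$ whenever $f\in\sF(\R^h)$ and $c>1$, the hypothesis supplies the exponential moment condition required by Varadhan's lemma, even when $f$ is affine and hence unbounded. This yields
\[
\Gm(f)=\sup_{x\in\R^h}\{f(x)-J(x)\}\quad\text{for all }f\in\sF(\R^h),
\]
so the value $\Gm(f)$ is the same for every subsequential $J$.

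The crucial step is to invert this relation and recover $J$ from $\Gm|_{\sF(\R^h)}$. The trivial bound $f(x)-\Gm(f)\le J(x)$ holds by definition for each $f\in\sF(\R^h)$, and for the reverse I would construct, for any $x_0\in\R^h$ and any $\a<J(x_0)$, a test function of the form $f(y)\coloneqq J(x_0)-L\|y-x_0\|_{\R^h}$ in $\sF(\R^h)$ with $L$ chosen large enough that goodness and lower semicontinuity of $J$ force $\sup_y\{f(y)-J(y)\}<f(x_0)-\a$. Stability of $\sF(\R^h)$ under pointwise minima, presence of constants, and separation of points suffice to push this construction through, yielding
\[
J(x)=\sup_{f\in\sF(\R^h)}\{f(x)-\Gm(f)\}.
\]
Thus $J$ is uniquely determined, the subsequential LDPs glue into a single LDP for the full sequence with good rate function $I\coloneqq J$, and Varadhan's lemma applied in the forward direction extends the convergence to every $f\in C_b(\R^h)$ and identifies $I$ via the Bryc-type representation~\eqref{eq:Cb-type_FL}. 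The main obstacle is this inversion step: one must construct enough concave Lipschitz test functions from the well-separating class to detect every level of a lower semicontinuous function on $\R^h$, which is precisely where the three structural properties of $\sF(\R^h)$ (constants, stability under $\wedge$, point separation) are used simultaneously.
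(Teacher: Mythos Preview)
The paper does not prove this statement; it is quoted verbatim as a special case of \cite[Theorem~4.4.10]{DZ10} and used as a black box in the proof of Theorem~\ref{thm:LDP_ernap}. There is therefore no ``paper's own proof'' to compare against.

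That said, your outline is essentially the standard Puhalskii--Bryc argument that Dembo and Zeitouni give for the cited result: exponential tightness yields subsequential LDPs with good rate functions, Varadhan's lemma along each such subsequence identifies $\Gm(f)=\sup_x\{f(x)-J(x)\}$ on the well-separating class, an inversion step shows $J$ is determined by $\Gm|_{\sF(\R^h)}$, hence all subsequential rate functions agree and the full LDP follows, and a final application of Varadhan extends the convergence to $C_b(\R^h)$ and gives the representation~\eqref{eq:Cb-type_FL}. One small remark: your explicit cone test function $f(y)=c-L\|y-x_0\|_{\R^h}$ already belongs to $\sF(\R^h)$ (it is concave and Lipschitz), so in this concrete $\R^h$ setting the inversion step goes through directly without appealing to the abstract well-separating properties (constants, closure under $\wedge$, point separation); those are needed only in the general metric-space version where such explicit functions are unavailable. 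Also, when $J(x_0)=\infty$ you should take $c$ to be an arbitrary finite level rather than $J(x_0)$ itself, but the argument is otherwise unchanged.
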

The rate function defined by~\eqref{eq:Cb-type_FL} is not necessarily convex unlike the Fenchel--Legendre transform.
The following theorem in the large deviation theory identifies the good convex rate function for an LDP as the Fenchel--Legendre transform of the limiting logarithmic moment generating function~(see also Theorem~\ref{thm:DZ_4.5.10_general} in Appendix~\ref{sec:LDP_rdm_meas} for a more general statement in the setting of a topological vector space instead of $\R^h$).
\begin{thm}[{\cite[Theorem~4.5.10]{DZ10}}]\label{thm:DZ_4.5.10}
Let $\{S^n\}_{n\in\N}$ be an $\R^h$-valued process.
Suppose that the $\R^h$-valued process $\{S^n/|\Lm^n|\}_{n\in\N}$ satisfies an LDP with speed $|\Lm^n|$ and a good convex rate function $I\colon\R^h\to[0,\infty]$, and also that
\[
\sup_{n\in\N}|\Lm^n|^{-1}\log\E[\exp(\langle\lm,S^n\rangle_{\R^h})]<\infty
\]
for any $\lm\in\R^h$.
Then, for every $\lm\in\R^h$, the limit
\[
\varphi(\lm)\coloneqq\lim_{n\to\infty}|\Lm^n|^{-1}\log\E[\exp(\langle\lm,S^n\rangle_{\R^h})]
\]
exists in $\R$, and $I=\varphi^*$ holds.
\end{thm}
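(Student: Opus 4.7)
The plan is to combine Varadhan's integral lemma with the Fenchel--Moreau biconjugate theorem. The strategy has two clean steps: first, show that for each $\lm \in \R^h$ the limit defining $\varphi(\lm)$ exists and coincides with the Fenchel--Legendre transform $I^*(\lm)$; second, invert this relation by convex duality to obtain $I = \varphi^*$.

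For the first step, I would apply Varadhan's integral lemma (e.g.\ \cite[Theorem~4.3.1]{DZ10}) to the continuous function $\phi(x) = \langle\lm, x\rangle_{\R^h}$, using the hypothesized LDP for $\{S^n/|\Lm^n|\}_{n\in\N}$ with good rate function $I$. Rewriting
\[
|\Lm^n|^{-1}\log\E[\exp(\langle\lm,S^n\rangle_{\R^h})]
= |\Lm^n|^{-1}\log\E\bigl[\exp(|\Lm^n|\phi(S^n/|\Lm^n|))\bigr],
\]
Varadhan's lemma yields
\[
\varphi(\lm) = \sup_{x\in\R^h}\{\langle\lm,x\rangle_{\R^h} - I(x)\} = I^*(\lm),
\]
provided the tail/moment condition $\limsup_n |\Lm^n|^{-1}\log\E[\exp(\gamma\langle\lm, S^n\rangle_{\R^h})] < \infty$ holds for some $\gamma > 1$. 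This is immediate from the standing hypothesis applied to the vector $\gamma\lm \in \R^h$.

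For the second step, I would invoke the Fenchel--Moreau theorem: every proper convex lower semicontinuous function $f \colon \R^h \to (-\infty, \infty]$ satisfies $f^{**} = f$. Here $I$ is convex by hypothesis, lower semicontinuous because it is a good rate function, and proper since taking $F = \R^h$ in the LDP upper bound~\eqref{eq:upper_bound} forces $\inf_{x\in\R^h} I(x) = 0$, so $I \not\equiv \infty$. Therefore $I = I^{**} = (I^*)^* = \varphi^*$, as required.

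The main obstacle is justifying Varadhan's lemma for the \emph{unbounded} linear test function $\phi(x) = \langle\lm,x\rangle_{\R^h}$; bounded continuous test functions demand no moment hypothesis, but linear functionals do. The elegance of the statement is that its moment hypothesis is postulated for \emph{every} $\lm \in \R^h$, which lets us absorb any factor $\gamma > 1$ simply by replacing $\lm$ with $\gamma\lm$. Once this tail control is in place, the remainder of the argument is a textbook application of classical convex duality.
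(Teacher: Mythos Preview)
Your proof is correct and follows the standard textbook argument (Varadhan's integral lemma to get $\varphi=I^*$, then Fenchel--Moreau biconjugation to recover $I=\varphi^*$). Note, however, that the paper does not supply its own proof of this statement: Theorem~\ref{thm:DZ_4.5.10} is quoted verbatim from \cite[Theorem~4.5.10]{DZ10} and used as a black box, so there is no in-paper proof to compare against. Your write-up is essentially the proof one finds in the cited reference.
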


In order to prove Theorem~\ref{thm:LDP_ernap}, what remains to be shown are the following: to check that the assumption of Theorem~\ref{thm:DZ_4.4.10} is satisfied, to verify the convexity of the good rate function obtained via Theorem~\ref{thm:DZ_4.4.10}, and to characterize the zero point of the good rate function.
Those are accomplished by the following three lemmas in this order.
These proofs are deferred to Subsection~\ref{ssec:pf_3lemmas}.
\begin{lem}\label{lem:lim_lEe_f}
Suppose that $\{S^n\}_{n\in\N}$ is an exponentially regular nearly additive $\R^h$-valued process satisfying that
\[
\sup_{n\in\N}|\Lm^n|^{-1}\log\E[\exp(\langle\lm,S^n\rangle_{\R^h})]<\infty
\]
for any $\lm\in\R^h$.
Then, $\{S^n/|\Lm^n|\}_{n\in\N}$ is exponentially tight, and the limit $\lim_{n\to\infty}|\Lm^n|^{-1}\Gm_n(f)$ exists in $[-\infty,\infty)$ for any $f\in\sF(\R^h)$.
\end{lem}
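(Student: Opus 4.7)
The plan is to prove the two claims independently. For the exponential tightness of $\{S^n/|\Lm^n|\}_{n\in\N}$, I would use a Chebyshev-type bound coordinate-wise. Fix $\a>0$ and a standard basis vector $e_i$. Then $\P(\la e_i,S^n\ra_{\R^h}>R|\Lm^n|)\le\exp(-R|\Lm^n|)\E[\exp(\la e_i,S^n\ra_{\R^h})]\le\exp((M_i-R)|\Lm^n|)$, where $M_i=\sup_{n}|\Lm^n|^{-1}\log\E[\exp(\la e_i,S^n\ra_{\R^h})]<\infty$ by hypothesis. Taking $R$ so that $R-M_i>\a$, repeating the argument for $-e_i$, and union bounding over $i=1,\dots,h$, I obtain a cube $K=[-R,R]^h$ with $\limsup_n|\Lm^n|^{-1}\log\P(S^n/|\Lm^n|\notin K)\le-\a$.

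For the existence of $\lim_n|\Lm^n|^{-1}\Gm_n(f)$ when $f\in\sF(\R^h)$ has Lipschitz constant $L$, my strategy is to prove the asymmetric inequality $\liminf_{n\to\infty}|\Lm^n|^{-1}\Gm_n(f)\ge\limsup_{k\to\infty}|\Lm^k|^{-1}\Gm_k(f)$, which forces the limit to exist. Fix $k\in\N$ large. For $n$ large, let $m_n$ be the unique integer with $(2m_n+1)k\le n<(2m_n+3)k$, set $N_n=(2m_n+1)^d$, and write $Y_n=\sum_{z\in\Z^d\cap[-m_n,m_n]^d}S^{k-r,z}$. By exponential regularity (for $S^n$ vs.\ $S^{(2m_n+1)k}$) combined with exponential $r$-near additivity (for $S^{(2m_n+1)k}$ vs.\ $Y_n$), the triangle inequality produces an event $A_n$ with $\P(A_n)\le\exp(-C|\Lm^n|)$ for arbitrarily large $C$ (by first choosing $k$ large, then $n$ large), off which $\|S^n-Y_n\|_{\R^h}\le2\eps|\Lm^n|$. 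On $A_n^c$, the Lipschitz property gives $|\Lm^n|f(S^n/|\Lm^n|)\ge|\Lm^n|f(Y_n/|\Lm^n|)-2L\eps|\Lm^n|$. Writing $Y_n/|\Lm^n|=\a_n\cdot N_n^{-1}\sum_z S^{k-r,z}/|\Lm^k|$ with $\a_n=N_n|\Lm^k|/|\Lm^n|\to1$, Jensen's inequality applied to the concave $f$, together with a Lipschitz correction for the $\a_n$ factor, gives $|\Lm^n|f(Y_n/|\Lm^n|)\ge|\Lm^k|\sum_z f(S^{k-r,z}/|\Lm^k|)+o(|\Lm^n|)$. Taking expectation and using independence of $\{S^{k-r,z}\}_z$ factorizes the main contribution into $N_n$ identical factors, leading after rescaling to the desired lower bound for $|\Lm^n|^{-1}\Gm_n(f)$ in terms of $|\Lm^{k-r}|^{-1}\Gm_{k-r}(f)$, which I replace by $|\Lm^k|^{-1}\Gm_k(f)$ using Lipschitz continuity and $|\Lm^{k-r}|/|\Lm^k|\to1$ as $k\to\infty$.

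The main obstacle is the rigorous separation of the main term from the exceptional event $A_n$, since $A_n$ couples all the independent copies $\{S^{k-r,z}\}_z$ together and prevents a direct factorization of the expectation. I would handle this via a H\"older decomposition $\E[X\1_{A_n^c}]\ge\E[X]-\E[X^p]^{1/p}\P(A_n)^{1-1/p}$, where $X$ is the right-hand side of the Jensen inequality scaled by $\exp(-2L\eps|\Lm^n|)$; the Lipschitz bound $f(y)\le f(0)+L\|y\|_{\R^h}$ combined with the MGF hypothesis applied to $pL$ bounds $\E[X^p]^{1/p}$ by $\exp(c_p|\Lm^n|)$, and choosing $C$ large relative to $c_p$ makes the correction term exponentially smaller than the main term. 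Letting $n\to\infty$, then $\eps\downarrow0$, and finally $k\to\infty$, all error terms vanish and the claimed inequality follows. The same Lipschitz-plus-MGF estimate gives the uniform upper bound $|\Lm^n|^{-1}\Gm_n(f)\le f(0)+L\cdot\sup_n|\Lm^n|^{-1}\log\E[\exp(L\|S^n\|_{\R^h})]<\infty$, showing that the limit lies in $[-\infty,\infty)$.
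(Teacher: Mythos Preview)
Your approach is essentially the same as the paper's: both establish exponential tightness via coordinate-wise exponential Chebyshev bounds, and both prove existence of the limit by showing $\liminf_n|\Lm^n|^{-1}\Gm_n(f)\ge\bar\Gm(f)\coloneqq\limsup_n|\Lm^n|^{-1}\Gm_n(f)$ through the Lipschitz--concavity chain applied to the i.i.d.\ decomposition $\sum_z S^{k-r,z}$, with a Cauchy--Schwarz/H\"older separation of the bad event (the paper uses $p=2$).

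Two steps in your sketch are imprecisely stated. First, you cannot ``replace $|\Lm^{k-r}|^{-1}\Gm_{k-r}(f)$ by $|\Lm^k|^{-1}\Gm_k(f)$ using Lipschitz continuity'': these involve \emph{different} random variables $S^{k-r}$ and $S^k$, and Lipschitzness of $f$ says nothing about their relation. The correct (and simpler) observation is that no replacement is needed: $\limsup_{k\to\infty}|\Lm^{k-r}|^{-1}\Gm_{k-r}(f)=\bar\Gm(f)$ trivially, since shifting the index by the fixed integer $r$ leaves the $\limsup$ unchanged. This is exactly how the paper closes the argument, fixing $k$ along a subsequence with $|\Lm^{k-r}|^{-1}\Gm_{k-r}(f)\ge\bar\Gm(f)-\eps_0$. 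Second, your ``Lipschitz correction for the $\a_n$ factor'' would produce a \emph{random} error $L(1-\a_n)\|\bar Y\|$ rather than a deterministic $o(1)$; the paper instead applies concavity once more, writing $f(\a_n\bar Y)\ge\a_n f(\bar Y)+(1-\a_n)f(0)$, which yields a clean deterministic remainder $(1-\a_n)f(0)$. Both points are easily repaired and do not affect the overall correctness of your outline.
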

For the next lemma, we introduce a rate function associated with the empirical means of i.i.d. random variables.
For each $k\in\N$, let $\psi_k\colon\R^h\to(-\infty,\infty]$ be the logarithmic moment generating function of $S^k/|\Lm^k|$, i.e.,
\[
\psi_k(\lm)\coloneqq\log\E[\exp(\langle\lm,S^k/|\Lm^k|\rangle_{\R^h})]
\]
for any $\lm\in\R^h$, and let $J_k$ denote the Fenchel--Legendre transform of $\psi_k$.
Then, it is well known that, if we set $\{W_i\}_{i=1}^\infty$ as independent copies of $S^k/|\Lm^k|$, then
\[
\limsup_{m\to\infty}m^{-1}\log\P\biggl(\frac1m\sum_{i=1}^mW_i\in F\biggr)\le-\inf_{x\in F}J_k(x)
\]
for any closed set $F\subset\R^h$, and
\[
\liminf_{m\to\infty}m^{-1}\log\P\biggl(\frac1m\sum_{i=1}^mW_i\in G\biggr)\ge-\inf_{x\in G}J_k(x)
\]
for any open set $G\subset\R^h$ (see, e.g.,~\cite[Theorem~2.2.30]{DZ10}).
\begin{lem}\label{lem:convexity}
Let $\{S^n\}_{n\in\N}$ be an exponentially nearly additive $\R^h$-valued process.
Suppose that the $\R^h$-valued process $\{S^n/|\Lm^n|\}_{n\in\N}$ satisfies an LDP with speed $|\Lm^n|$ and a rate function $I\colon\R^h\to[0,\infty]$.
Then, the following hold.
\begin{enumerate}
\item For every $x\in\R^h$,
\begin{equation}\label{eq:convexity}
I(x)\le\liminf_{k\to\infty}\frac{J_k(x)}{|\Lm^k|}.
\end{equation}
\item $I$ is convex.
\end{enumerate}
\end{lem}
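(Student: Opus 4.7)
The plan for both parts is to exploit the exponentially $r$-nearly additive decomposition
\[
S^{(2m+1)(k+r)} \approx \sum_{z\in\Z^d\cap[-m,m]^d} S^{k,z},
\]
which represents $S^{(2m+1)(k+r)}$ (up to an error whose probability is exponentially small at arbitrarily large rate) as a sum of $M:=(2m+1)^d$ i.i.d.\ copies of $S^k$. Cram\'er's theorem then provides an LDP for the empirical mean $\overline W_m := M^{-1}\sum_z S^{k,z}/|\Lm^k|$ with speed $M$ and rate function $J_k$. Noting that $\sum_z S^{k,z}/|\Lm^{(2m+1)(k+r)}| = b_k \overline W_m$ with $b_k := |\Lm^k|/|\Lm^{k+r}| \in (0,1]$ and $b_k\to1$, the near-additive decomposition combined with Cram\'er lets one lower-bound the probability that $S^{(2m+1)(k+r)}/|\Lm^{(2m+1)(k+r)}|$ lies in a small neighborhood of a prescribed point, and then the LDP upper bound from the hypothesis, applied along the subsequence $n=(2m+1)(k+r)$, delivers both conclusions.

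For part (1), fix $x\in\R^h$ and $\delta>0$. For $k$ large enough that $(1-b_k)\|x\|_{\R^h}<\delta$, the event $\{\overline W_m \in B(x,\delta)\}$ together with the near-additive error event forces $S^{(2m+1)(k+r)}/|\Lm^{(2m+1)(k+r)}| \in B(x,3\delta)$. Combining Cram\'er's lower bound $\liminf_m M^{-1}\log\P(\overline W_m \in B(x,\delta)) \ge -J_k(x)$ with $|\Lm^{(2m+1)(k+r)}| = M|\Lm^{k+r}|$ gives
\[
\liminf_{m\to\infty} |\Lm^{(2m+1)(k+r)}|^{-1}\log\P\!\left(\frac{S^{(2m+1)(k+r)}}{|\Lm^{(2m+1)(k+r)}|} \in B(x,3\delta)\right) \ge -\frac{J_k(x)}{|\Lm^{k+r}|}.
\]
Applying the assumed LDP upper bound to $\bar B(x,3\delta)$ along this subsequence yields $\inf_{\bar B(x,3\delta)} I \le J_k(x)/|\Lm^{k+r}| \le J_k(x)/|\Lm^k|$. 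Sending $k\to\infty$ along a subsequence that realizes $\liminf_k J_k(x)/|\Lm^k|$ and then $\delta\to0$, using $\lim_{\delta\to0}\inf_{\bar B(x,\delta)} I = I(x)$ from lower semicontinuity, completes (1).

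For part (2), fix $x_1,x_2$ with $I(x_i)<\infty$ and $t\in(0,1)$, and consider the multinomial event $E_m$ that exactly $\lfloor Mt\rfloor$ of the i.i.d.\ copies $\{S^{k,z}/|\Lm^k|\}_z$ lie in $B(x_1,\delta)$ and the remaining $M-\lfloor Mt\rfloor$ in $B(x_2,\delta)$ (with $\delta<\|x_1-x_2\|_{\R^h}/2$ so the balls are disjoint). On $E_m$ the empirical mean $\overline W_m$ lies within $O(\delta)+O(M^{-1})$ of $tx_1+(1-t)x_2$, so for large $k$ and $m$, $S^{(2m+1)(k+r)}/|\Lm^{(2m+1)(k+r)}| \in B(tx_1+(1-t)x_2,3\delta)$. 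By independence,
\[
\P(E_m) = \binom{M}{\lfloor Mt\rfloor} p_1^{\lfloor Mt\rfloor} p_2^{M-\lfloor Mt\rfloor},\qquad p_i := \P(S^k/|\Lm^k|\in B(x_i,\delta)),
\]
and Stirling gives $M^{-1}\log\binom{M}{\lfloor Mt\rfloor} \to H(t):=-t\log t-(1-t)\log(1-t)$. Transferring this lower bound through the near-additive approximation and applying the LDP upper bound on $\bar B(tx_1+(1-t)x_2,3\delta)$ yields
\[
\inf_{\bar B(tx_1+(1-t)x_2,\,3\delta)} I \le -\frac{H(t)+t\log p_1+(1-t)\log p_2}{|\Lm^{k+r}|}.
\]
The LDP lower bound on $B(x_i,\delta)$ gives $-\log p_i/|\Lm^{k+r}| \le b_k(\inf_{B(x_i,\delta)} I + \eps)$, while $H(t)/|\Lm^{k+r}| \to 0$. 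Sending $k\to\infty$, then $\delta\to0$ (with lower semicontinuity to absorb $\inf_{B(x_i,\delta)} I \to I(x_i)$), and finally shrinking the ball around $tx_1+(1-t)x_2$ yields the convexity inequality $I(tx_1+(1-t)x_2) \le tI(x_1)+(1-t)I(x_2)$.

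The main technical challenge is orchestrating the nested limits in $m$, $k$, and $\delta$: the near-additive slack $\eps$, the constant $C$ in Definition~\ref{df:enap} (which must be taken larger than the target rate, forcing us to work for $k \ge K(C)$ only), the Stirling remainder $O(M^{-1}\log M)$, the geometric distortion $1-b_k$, and the two ball radii $\delta$ and $3\delta$ must all be arranged so that the corrections collapse in the correct order. Part (2) is especially delicate because the argument runs a Cram\'er-type lower bound on the \emph{atypical} event of a prescribed binomial split, and one must carefully verify that the averaged sum on $E_m$ genuinely concentrates near $tx_1+(1-t)x_2$ after all the scaling and approximation errors are absorbed.
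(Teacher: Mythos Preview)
Your Part~(1) is essentially the paper's argument (with the cosmetic reindexing $k\leftrightarrow k+r$): both use Cram\'er's lower bound on the i.i.d.\ block average, subtract the near-additive error at level $C$, and apply the assumed LDP upper bound along the subsequence $n=(2m+1)k$.

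Your Part~(2), however, takes a genuinely different route. The paper does \emph{not} use a multinomial typing argument. Instead it runs the assumed LDP \emph{lower} bound on $B(x_i,\delta/3)$ together with Cram\'er's \emph{upper} bound on the block average to deduce, for each $i$ and all large $k$,
\[
I(x_i)\ge\frac{\inf_{y\in\bar B(x_i,\delta)}J_{k-r}(y)-1}{|\Lm^k|}.
\]
Convex-combining these two inequalities and invoking the (trivial) convexity of the Fenchel--Legendre transform $J_{k-r}$ gives $tI(x_1)+(1-t)I(x_2)\ge(\inf_{\bar B(x,l^{-1})}J_{k-r}-1)/|\Lm^k|$; choosing a minimizer $y_l$, sending $k\to\infty$ via Part~(1), and then $l\to\infty$ with lower semicontinuity finishes. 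So the paper \emph{reduces} Part~(2) to Part~(1) through the convexity of $J_k$, whereas your argument proves Part~(2) directly from the LDP hypothesis alone, never touching $J_k$ or Cram\'er in that step. Your approach is more self-contained (and makes (2) logically independent of (1)), at the cost of the limit bookkeeping you flag; the paper's approach is shorter once (1) is in hand, but leans on the extra structure of $J_k$.
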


\begin{lem}\label{lem:zero_point}
Let $\{S^n\}_{n\in\N}$ be an exponentially nearly additive $\R^h$-valued process.
Suppose that the $\R^h$-valued process $\{S^n/|\Lm^n|\}_{n\in\N}$ satisfies an LDP with speed $|\Lm^n|$ and a rate function $I\colon\R^h\to[0,\infty]$.
Suppose also that the limit
\begin{equation}\label{eq:spatial_const}
\widehat S\coloneqq\lim_{n\to\infty}\frac{\E[S^n]}{|\Lm^n|}
\end{equation}
exists in $\R^h$, and that $S^n/|\Lm^n|$ converges to $\widehat S$ in probability as $n\to\infty$.
Then, $I(x)=0$ if and only if $x=\widehat S$.
\end{lem}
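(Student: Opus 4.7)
The plan is to prove the two implications separately. The direction $I(\widehat S)=0$ is a standard consequence of convergence in probability, the LDP lower bound, and lower semicontinuity of $I$. The converse direction (that any zero of $I$ must equal $\widehat S$) is the substantive part, and it exploits exponential near additivity to replace $S^n$ by an i.i.d.\ block sum and then applies a Cram\'er-type concentration to derive a contradiction.

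For $I(\widehat S)=0$: any open neighborhood $G$ of $\widehat S$ has $\P(S^n/|\Lm^n|\in G)\to 1$ by convergence in probability, so $|\Lm^n|^{-1}\log\P(S^n/|\Lm^n|\in G)\to 0$; the LDP lower bound then gives $\inf_G I=0$. Taking $G=B(\widehat S,1/k)$ and choosing $x_k\in G$ with $I(x_k)<1/k$, the lower semicontinuity of $I$ (built into the definition of a rate function) yields $I(\widehat S)\le\liminf_k I(x_k)=0$.

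For uniqueness, argue by contradiction: suppose $I(x_0)=0$ with $x_0\neq\widehat S$, and set $\eta=\|x_0-\widehat S\|_{\R^h}/4$. The LDP lower bound on the open ball $B(x_0,\eta/2)$ forces
\[
\liminf_{n\to\infty}|\Lm^n|^{-1}\log\P\bigl(S^n/|\Lm^n|\in B(x_0,\eta/2)\bigr)\ge 0.
\]
We derive a contradictory upper bound along the subsequence $n=(2m+1)k$ for a sufficiently large fixed $k$. By exponential $r$-near additivity, there are i.i.d.\ copies $\{S^{k-r,z}\}_{z\in\Z^d}$ of $S^{k-r}$ with
\[
\P\Bigl(\bigl\|S^{(2m+1)k}-\sum_{z\in\Z^d\cap[-m,m]^d}S^{k-r,z}\bigr\|_{\R^h}>\tfrac{\eta}{2}|\Lm^{(2m+1)k}|\Bigr)\le\exp(-C|\Lm^{(2m+1)k}|)
\]
for any prescribed $C>0$, provided $k\ge K(\eta/2,C)$. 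Cram\'er's theorem for i.i.d.\ $\R^h$-valued random variables (whose exponential-moment hypothesis is supplied in the intended application by~\eqref{eq:LDP_ernap_0}) then yields exponential concentration of the empirical mean $N^{-1}\sum_z S^{k-r,z}$ around $\E[S^{k-r}]$ with rate $N=(2m+1)^d=|\Lm^{(2m+1)k}|/|\Lm^k|$. In terms of the rescaled sum, concentration occurs around $\E[S^{k-r}]/|\Lm^k|$, and by~\eqref{eq:spatial_const} together with $|\Lm^{k-r}|/|\Lm^k|\to 1$, this point converges to $\widehat S$ as $k\to\infty$. So for $k$ large, $\E[S^{k-r}]/|\Lm^k|$ lies within $\eta$ of $\widehat S$, hence at distance at least $3\eta$ from $x_0$; the compact ball $\overline{B(x_0,\eta)}$ misses this Cram\'er minimum, so there exists $\alpha_k>0$ with
\[
\limsup_{m\to\infty}|\Lm^{(2m+1)k}|^{-1}\log\P\Bigl(\sum_{z}S^{k-r,z}/|\Lm^{(2m+1)k}|\in\overline{B(x_0,\eta)}\Bigr)\le -\alpha_k/|\Lm^k|.
\]

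Combining (by a union bound) the near-additivity error with this Cram\'er bound, chosen with $C>\alpha_k/|\Lm^k|$, yields $\limsup_{m\to\infty}|\Lm^{(2m+1)k}|^{-1}\log\P(S^{(2m+1)k}/|\Lm^{(2m+1)k}|\in B(x_0,\eta/2))<0$, which contradicts the LDP lower bound restricted to this subsequence. The main obstacle is the joint tuning of $k$: it must be chosen large enough both to secure the geometric separation between $\E[S^{k-r}]/|\Lm^k|$ and $x_0$, and to allow the near-additivity threshold $K(\eta/2,C)$ to be exceeded with $C>\alpha_k/|\Lm^k|$. Both requirements are compatible because the scaling $\alpha_k/|\Lm^k|$ remains bounded above as $k$ grows (being inherited from the asymptotic concentration of $S^{k-r}/|\Lm^k|$ around $\widehat S$), so a sufficiently large $k$ simultaneously satisfies them.
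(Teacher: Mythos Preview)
Your approach is essentially the paper's: both combine the LDP lower bound for $\{S^n/|\Lm^n|\}$ with exponential near-additivity and Cram\'er's upper bound for the i.i.d.\ block sum $\sum_z S^{k-r,z}$. The paper packages this as the inequality $I(x)\ge |\Lm^k|^{-1}\inf_{y\in\bar B(x,\dl)}J_{k-r}(y)$ (the analogue of~\eqref{eq:convexity_2_4}), deduces that the unique zero $\E[S^{k-r}]/|\Lm^{k-r}|$ of $J_{k-r}$ lies in $\bar B(x,\dl)$ for all large $k$, and lets $\dl\downarrow 0$; your contradiction argument is the contrapositive of this.

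One correction: the ``joint tuning'' obstacle you describe is self-imposed. You do not need $C>\alpha_k/|\Lm^k|$; the union bound already yields
\[
\limsup_{m\to\infty}|\Lm^{(2m+1)k}|^{-1}\log\P\bigl(S^{(2m+1)k}/|\Lm^{(2m+1)k}|\in B(x_0,\eta/2)\bigr)\le-\min\bigl(C,\alpha_k/|\Lm^k|\bigr),
\]
which is strictly negative as soon as \emph{both} $C>0$ and $\alpha_k>0$. So fix any $C>0$, obtain $K$ from near-additivity, and then choose $k\ge K$ large enough for the geometric separation; $\alpha_k>0$ then follows because the unique zero of the Cram\'er rate lies outside $\bar B(x_0,\eta)$. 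The unproven claim that $\alpha_k/|\Lm^k|$ stays bounded is unnecessary and should be dropped. Finally, note that both your argument and the paper's tacitly use that $J_{k-r}$ has a \emph{unique} zero at the mean, which requires some exponential integrability of $S^{k-r}$; you are right that this is supplied by~\eqref{eq:LDP_ernap_0} in the intended application, though it is not among the lemma's stated hypotheses.
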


Combining these lemmas with the above two theorems and the strong law of large numbers for $\{S^n\}_{n\in\N}$, we can immediately prove Theorem~\ref{thm:LDP_ernap}.
\begin{proof}[Proof of Theorem~\ref{thm:LDP_ernap}]
From the assumption, Lemma~\ref{lem:lim_lEe_f} implies that $\{S^n/|\Lm^n|\}_{n\in\N}$ is exponentially tight, and the limit $\lim_{n\to\infty}|\Lm^n|^{-1}\Gm_n(f)$ exists in $[-\infty,\infty)$ for any $f\in\sF(\R^h)$.
By Theorem~\ref{thm:DZ_4.4.10}, for every $f\in C_b(\R^h)$, the limit
\[
\Gm(f)\coloneqq\lim_{n\to\infty}|\Lm^n|^{-1}\Gm_n(f)
\]
also exists in $\R$, and the $\R^h$-valued process $\{S^n/|\Lm^n|\}_{n\in\N}$ satisfies the LDP with speed $|\Lm^n|$ and a good rate function $I\colon\R^h\to[0,\infty]$ defined by
\[
I(x)\coloneqq\sup_{f\in C_b(\R^h)}\{f(x)-\Gm(f)\}.
\]
Furthermore, $I$ is convex by Lemma~\ref{lem:convexity}.
Therefore, it follows from Theorem~\ref{thm:DZ_4.5.10} that for every $\lm\in\R^h$, the limit
\[
\varphi(\lm)\coloneqq\lim_{n\to\infty}|\Lm^n|^{-1}\log\E[\exp(\langle\lm,S^n\rangle_{\R^h})]
\]
exists in $\R$, and $I=\varphi^*$ holds.
Lastly, combining Lemma~\ref{lem:zero_point} with the strong law of large numbers for $\{S^n\}_{n\in\N}$ discussed before Theorem~\ref{thm:LDP_ernap}, we conclude that $\varphi^*(x)=0$ if and only if $x=\widehat S$, which completes the proof.
\end{proof}

\subsection{Proofs of Lemmas~\ref{lem:lim_lEe_f},~\ref{lem:convexity}, and~\ref{lem:zero_point}}\label{ssec:pf_3lemmas}
In this subsection, we prove Lemmas~\ref{lem:lim_lEe_f},~\ref{lem:convexity}, and~\ref{lem:zero_point} in this order.
\begin{proof}[Proof of Lemma~\ref{lem:lim_lEe_f}]
We set $A(\lm)=\sup_{n\in\N}|\Lm^n|^{-1}\log\E[\exp(\langle\lm,S^n\rangle_{\R^h})]<\infty$ for any $\lm\in\R^h$, and write $S^n=(S^n_1,S^n_2,\ldots,S^n_h)$.
For any $\a\ge0$, the Markov inequality after exponentiating yields
\[
|\Lm^n|^{-1}\log\P\biggl(\sum_{i=1}^h\bigl|S^n_i\bigr|\ge \a|\Lm^n|\biggr)
\le|\Lm^n|^{-1}\log\E\biggl[\exp\biggl(\sum_{i=1}^h\bigl|S^n_i\bigr|\biggr)\biggr]-\a.
\]
Furthermore,
\begin{align}\label{eq:lim_lEe_f_1}
\E\biggl[\exp\biggl(\sum_{i=1}^h\bigl|S^n_i\bigr|\biggr)\biggr]
&=\sum_{J\subset[h]}\E\biggl[\exp\biggl(\sum_{i=1}^h\bigl|S^n_i\bigr|\biggr);S^n_j\ge0\text{ for $j\in J$},S^n_j<0\text{ for $j\notin J$}\biggr]\nonumber\\
&\le\sum_{J\subset[h]}\E[\exp(\langle\lm_J,S^n\rangle)]\nonumber\\
&\le\sum_{J\subset[h]}\exp(A(\lm_J)|\Lm^n|),
\end{align}
where $[h]\coloneqq\{1,2,\ldots,h\}$ and $\lm_J\in\R^h$ is a vector whose $j$th element is $1$ if $j\in J$, otherwise $-1$.
Combining the above estimates, we obtain
\[
\limsup_{n\to\infty}|\Lm^n|^{-1}\log\P\biggl(\sum_{i=1}^h\bigl|S^n_i\bigr|\ge\a|\Lm^n|\biggr)
\le\max_{J\subset[h]}A(\lm_J)-\a,
\]
which immediately implies the exponential tightness of $\{S^n/|\Lm^n|\}_{n\in\N}$.

Let $f\in\sF(\R^h)$ be fixed, and set
\[
\bar\Gm(f)
\coloneqq\limsup_{n\to\infty}|\Lm^n|^{-1}\Gm_n(f)
=\limsup_{n\to\infty}|\Lm^n|^{-1}\log\E\biggl[\exp\biggl(|\Lm^n|f\biggl(\frac{S^n}{|\Lm^n|}\biggr)\biggr)\biggr]\in[-\infty,\infty].
\]
If $\bar\Gm(f)=-\infty$, then there is nothing to prove.
Hence, we assume $\bar\Gm(f)>-\infty$.
Writing the Lipschitz constant of $f$ by $\|f\|_{\Lip}$, we have
\begin{align*}
\exp\biggl(\b|\Lm^n|f\biggl(\frac{S^n}{|\Lm^n|}\biggr)\biggr)
&\le\exp(\b|\Lm^n|f(0)+\b\|f\|_{\Lip}\bigl\|S^n\bigr\|_{\R^h})\\
&\le\exp(\b|\Lm^n|f(0))\exp\Biggl(\b\|f\|_{\Lip}\sum_{i=1}^h\bigl|S^n_i\bigr|\Biggr)
\end{align*}
for any $\b\ge0$.
Therefore, it follows from a similar calculation to~\eqref{eq:lim_lEe_f_1} that for any $\b\ge0$, there exists a constant $A_\b\ge0$ such that
\begin{equation}\label{eq:lim_lEe_f_2}
\E\biggl[\exp\biggl(\b|\Lm^n|f\biggl(\frac{S^n}{|\Lm^n|}\biggr)\biggr)\biggr]\le\exp(A_\b|\Lm^n|)
\end{equation}
for all $n\in\N$.
In particular, $\bar\Gm(f)\le A_1<\infty$ by taking $\b=1$.

Now, we let $\eps>0$ and show that
\begin{equation}\label{eq:lim_lEe_f_3}
\liminf_{n\to\infty}|\Lm^n|^{-1}\Gm_n(f)\ge\bar\Gm(f)-\eps.
\end{equation}
Set $\eps_0\coloneqq\eps/(2\|f\|_{\Lip}+1)$ and take a sufficiently large $C_0\ge0$ satisfying that $(A_2-C_0)/2<\bar\Gm(f)-\eps$.
By the exponentially near additivity of $\{S^n\}_{n\in\N}$, we can take an integer $r\ge0$, random variables $\{S^{n,z}\}_{n\in\N,z\in\Z^d}$, and an integer $K>r$ such that $\{S^{n,z}\}_{z\in\Z^d}$ are independent copies of $S^n$ for every $n\in\N$ and
\begin{equation}\label{eq:lim_lEe_f_4}
\P\biggl(\biggl\|S^{(2m+1)k}-\sum_{z\in\Z^d\cap[-m,m]^d}S^{k-r,z}\biggr\|_{\R^h}>\eps_0|\Lm^{(2m+1)k}|\biggr)\le\exp(-C_0|\Lm^{(2m+1)k}|)
\end{equation}
holds for all $k\ge K$ and $m\in\N$.
We may choose the integer $K$ large enough so that
\begin{equation}\label{eq:lim_lEe_f_5}
\frac{A_2-C_0}2<\biggl(1-\frac rK\biggr)^d\bar\Gm(f)-\eps.
\end{equation}
Now, we fix $k\ge K$ such that
\begin{equation}\label{eq:lim_lEe_f_6}
|\Lm^{k-r}|^{-1}\Gm_{k-r}(f)\ge\bar\Gm(f)-\eps_0.
\end{equation}
Let $m_n$ be the unique integer satisfying that $(2m_n+1)k\le n<(2m_n+3)k$ for each $n\in\N$.
Then, from the exponential regularity of $\{S^n\}_{n\in\N}$, we can take $N\in\N$ such that
\begin{equation}\label{eq:lim_lEe_f_7}
\P(\|S^n-S^{(2m_n+1)k}\|_{\R^h}>\eps_0|\Lm^n|)\le\exp(-C_0|\Lm^n|)
\end{equation}
for all $n\ge N$.
By the Lipschitzness of $f$, we have
\begin{align*}
f\biggl(\frac{S^n}{|\Lm^n|}\biggr)
&\ge f\biggl(\frac1{|\Lm^n|}\sum_{z\in\Z^d\cap[-m_n,m_n]^d}S^{k-r,z}\biggr)-\frac{\|f\|_{\Lip}}{|\Lm^n|}\biggl\|S^n-\sum_{z\in\Z^d\cap[-m_n,m_n]^d}S^{k-r,z}\biggr\|_{\R^h}\\
&\ge f\biggl(\frac1{|\Lm^n|}\sum_{z\in\Z^d\cap[-m_n,m_n]^d}S^{k-r,z}\biggr)-\frac{\|f\|_{\Lip}}{|\Lm^n|}(R_1(n)+R_2(n)),
\end{align*}
where
\[
R_1(n)\coloneqq\|S^n-S^{(2m_n+1)k}\|_{\R^h}\quad\text{and}\quad R_2(n)\coloneqq\biggl\|S^{(2m_n+1)k}-\sum_{z\in\Z^d\cap[-m_n,m_n]^d}S^{k-r,z}\biggr\|_{\R^h}.
\]
Furthermore, the concavity of $f$ yields
\begin{align*}
&f\biggl(\frac1{|\Lm^n|}\sum_{z\in\Z^d\cap[-m_n,m_n]^d}S^{k-r,z}\biggr)\\
&=f\biggl(\frac{(2m_n+1)^d|\Lm^{k-r}|}{|\Lm^n|}\frac1{(2m_n+1)^d}\sum_{z\in\Z^d\cap[-m_n,m_n]^d}\frac{S^{k-r,z}}{|\Lm^{k-r}|}\biggr)\\
&\ge\frac{(2m_n+1)^d|\Lm^{k-r}|}{|\Lm^n|}f\biggl(\frac1{(2m_n+1)^d}\sum_{z\in\Z^d\cap[-m_n,m_n]^d}\frac{S^{k-r,z}}{|\Lm^{k-r}|}\biggr)+\biggl(1-\frac{(2m_n+1)^d|\Lm^{k-r}|}{|\Lm^n|}\biggr)f(0)\\
&\ge\frac{|\Lm^{k-r}|}{|\Lm^n|}\sum_{z\in\Z^d\cap[-m_n,m_n]^d}f\biggl(\frac{S^{k-r,z}}{|\Lm^{k-r}|}\biggr)+\biggl(1-\frac{(2m_n+1)^d|\Lm^{k-r}|}{|\Lm^n|}\biggr)f(0).
\end{align*}
For the first inequality, we note that $(2m_n+1)^d|\Lm^{k-r}|\le|\Lm^{(2m_n+1)k}|\le|\Lm^n|$.
Combining the above estimates, we have
\begin{align*}
&|\Lm^n|f\biggl(\frac{S^n}{|\Lm^n|}\biggr)-\{|\Lm^n|-(2m_n+1)^d|\Lm^{k-r}|\}f(0)\\
&\ge|\Lm^{k-r}|\sum_{z\in\Z^d\cap[-m_n,m_n]^d}f\biggl(\frac{S^{k-r,z}}{|\Lm^{k-r}|}\biggr)-\|f\|_{\Lip}(R_1(n)+R_2(n)).
\end{align*}
By exponentiating and taking expectation,
\begin{align}\label{eq:lim_lEe_f_8}
&\E\biggl[\exp\biggl(|\Lm^n|f\biggl(\frac{S^n}{|\Lm^n|}\biggr)\biggr)\biggr]\cdot\exp(-\{|\Lm^n|-(2m_n+1)^d|\Lm^{k-r}|\}f(0))\nonumber\\
&\ge\E\biggl[\exp\biggl(|\Lm^{k-r}|\sum_{z\in\Z^d\cap[-m_n,m_n]^d}f\biggl(\frac{S^{k-r,z}}{|\Lm^{k-r}|}\biggr)\biggr)\cdot\exp(-\|f\|_{\Lip}(R_1(n)+R_2(n)))\biggr]\nonumber\\
&\ge\E\biggl[\exp\biggl(|\Lm^{k-r}|\sum_{z\in\Z^d\cap[-m_n,m_n]^d}f\biggl(\frac{S^{k-r,z}}{|\Lm^{k-r}|}\biggr)\biggr)\biggr]\cdot\exp(-\|f\|_{\Lip}\eps_0(|\Lm^n|+|\Lm^{(2m_n+1)k}|))\nonumber\\
&\qad-\E\biggl[\exp\biggl(|\Lm^{k-r}|\sum_{z\in\Z^d\cap[-m_n,m_n]^d}f\biggl(\frac{S^{k-r,z}}{|\Lm^{k-r}|}\biggr)\biggr);R_1(n)>\eps_0|\Lm^n|\biggr]\nonumber\\
&\qad-\E\biggl[\exp\biggl(|\Lm^{k-r}|\sum_{z\in\Z^d\cap[-m_n,m_n]^d}f\biggl(\frac{S^{k-r,z}}{|\Lm^{k-r}|}\biggr)\biggr);R_2(n)>\eps_0|\Lm^{(2m_n+1)k}|\biggr]\nonumber\\
&=:I_1-I_2-I_3.
\end{align}
For the following calculations, we note that $\{S^{n,z}\}_{z\in\Z^d}$ are independent copies of $S^n$ for every $n\in\N$.
Note also that $(2m_n+1)^d|\Lm^{k-r}|\le|\Lm^{(2m_n+1)k}|\le|\Lm^n|$.
By~\eqref{eq:lim_lEe_f_6},
\begin{align*}
I_1
&=\E\biggl[\exp\biggl(|\Lm^{k-r}|f\biggl(\frac{S^{k-r}}{|\Lm^{k-r}|}\biggr)\biggr)\biggr]^{(2m_n+1)^d}\cdot\exp(-\|f\|_{\Lip}\eps_0(|\Lm^n|+|\Lm^{(2m_n+1)k}|))\\
&\ge\exp((2m_n+1)^d|\Lm^{k-r}|(\bar\Gm(f)-\eps_0))\cdot\exp(-\|f\|_{\Lip}\eps_0(|\Lm^n|+|\Lm^{(2m_n+1)k}|))\\
&\ge\exp((2m_n+1)^d|\Lm^{k-r}|\bar\Gm(f)-(2\|f\|_{\Lip}+1)\eps_0|\Lm^n|)\\
&=\exp\biggl(\biggl(\frac{(2m_n+1)^d|\Lm^{k-r}|}{|\Lm^n|}\bar\Gm(f)-\eps\biggr)|\Lm^n|\biggr).
\end{align*}
By the Cauchy--Schwarz inequality,~\eqref{eq:lim_lEe_f_7}, and~\eqref{eq:lim_lEe_f_2} with $\b=2$,
\begin{align*}
I_2
&\le\E\biggl[\exp\biggl(2|\Lm^{k-r}|\sum_{z\in\Z^d\cap[-m_n,m_n]^d}f\biggl(\frac{S^{k-r,z}}{|\Lm^{k-r}|}\biggr)\biggr)\biggr]^{1/2}\cdot\P(R_1(n)>\eps_0|\Lm^n|)^{1/2}\\
&=\E\biggl[\exp\biggl(2|\Lm^{k-r}|f\biggl(\frac{S^{k-r}}{|\Lm^{k-r}|}\biggr)\biggr)\biggr]^{(2m_n+1)^d/2}\cdot\exp(-C_0|\Lm^n|/2)\\
&\le\exp\biggl(\frac{A_2-C_0}2|\Lm^n|\biggr)
\end{align*}
for all $n\ge N$.
Similarly, by the Cauchy--Schwarz inequality,~\eqref{eq:lim_lEe_f_4}, and~\eqref{eq:lim_lEe_f_2} with $\b=2$,
\begin{align*}
I_3
\le\exp(A_2|\Lm^n|/2-C_0|\Lm^{(2m_n+1)k}|/2)
=\exp\biggl(\frac12\biggl(A_2-C_0\frac{|\Lm^{(2m_n+1)k}|}{|\Lm^n|}\biggr)|\Lm^n|\biggr)
\end{align*}
for all $n\ge N$.
Since
\[
\lim_{n\to\infty}\frac{(2m_n+1)^d|\Lm^{k-r}|}{|\Lm^n|}
=\biggl(1-\frac rk\biggr)^d\quad\text{and}\quad\lim_{n\to\infty}\frac{|\Lm^{(2m_n+1)k}|}{|\Lm^n|}=1,
\]
it follows from~\eqref{eq:lim_lEe_f_5} that
\[
\frac{A_2-C_0}2<\frac12\biggl(A_2-C_0\frac{|\Lm^{(2m_n+1)k}|}{|\Lm^n|}\biggr)<\frac{(2m_n+1)^d|\Lm^{k-r}|}{|\Lm^n|}\bar\Gm(f)-\eps
\]
for sufficiently large $n$.
Therefore, an elementary calculation yields
\[
\liminf_{n\to\infty}|\Lm^n|^{-1}\log(I_1-I_2-I_3)
\ge\biggl(1-\frac rk\biggr)^d\bar\Gm(f)-\eps.
\]
Thus, combining this estimate with~\eqref{eq:lim_lEe_f_8}, we obtain
\[
\liminf_{n\to\infty}|\Lm^n|^{-1}\Gm_n(f)-\biggl(1-\biggl(1-\frac rk\biggr)^d\biggr)f(0)
\ge\biggl(1-\frac rk\biggr)^d\bar\Gm(f)-\eps.
\]
Taking $k\to\infty$ in the above inequality along a suitable subsequence so that~\eqref{eq:lim_lEe_f_6} is satisfied yields~\eqref{eq:lim_lEe_f_3}, which completes the proof since $\eps>0$ is arbitrary.
\end{proof}

Next, we turn to prove Lemma~\ref{lem:convexity}.
\begin{proof}[Proof of Lemma~\ref{lem:convexity}]
(1)~If $\liminf_{k\to\infty}J_k(x)/|\Lm^k|=\infty$, then there is nothing to prove.
Hence, we assume that $\liminf_{k\to\infty}J_k(x)/|\Lm^k|<\infty$.
Let $c<I(x)$ be fixed.
By the lower semicontinuity of the rate function $I$, we take $\eps>0$ such that $y\in\bar B(x,\eps)$ implies $I(y)\ge c$.
We set $C\coloneqq\liminf_{k\to\infty}J_k(x)/|\Lm^k|+1<\infty$.
From the exponentially near additivity of $\{S^n\}_{n\in\N}$, we can take an integer $r\ge0$, random variables $\{S^{n,z}\}_{n\in\N,z\in\Z^d}$, and an integer $K>r$ such that $\{S^{n,z}\}_{z\in\Z^d}$ are independent copies of $S^n$ for every $n\in\N$ and
\begin{equation}\label{eq:convexity_1_1}
\P\biggl(\biggl\|S^{(2m+1)k}-\sum_{z\in\Z^d\cap[-m,m]^d}S^{k-r,z}\biggr\|_{\R^h}>\frac\eps2|\Lm^{(2m+1)k}|\biggr)\le\exp(-C|\Lm^{(2m+1)k}|)
\end{equation}
holds for all $k\ge K$ and $m\in\N$.
We may choose the integer $K$ large enough so that
\begin{equation}\label{eq:convexity_1_2}
\frac{|\Lm^{K-r}|}{|\Lm^K|}x\in B(x,\eps/2).
\end{equation}
Now, we fix $k\ge K$ satisfying that
\begin{equation}\label{eq:convexity_1_3}
\frac{J_{k-r}(x)+1}{|\Lm^{k-r}|}<C.
\end{equation}
By the large deviation principles for $\{S^n/|\Lm^n|\}_{n\in\N}$ controlled by $I$, we have
\begin{align}\label{eq:convexity_1_4}
-c
&\ge-\inf_{y\in\bar B(x,\eps)}I(y)\nonumber\nonumber\\
&\ge\limsup_{m\to\infty}\frac1{|\Lm^{(2m+1)k}|}\log\P\biggl(\frac{S^{(2m+1)k}}{|\Lm^{(2m+1)k}|}\in\bar B(x,\eps)\biggr)\nonumber\\
&\ge\limsup_{m\to\infty}\frac1{|\Lm^{(2m+1)k}|}\log\biggl\{\P\biggl(\frac1{|\Lm^{(2m+1)k}|}\sum_{z\in\Z^d\cap[-m,m]^d}S^{k-r,z}\in B(x,\eps/2)\biggr)\nonumber\\
&\qad-\P\biggl(\biggl\|S^{(2m+1)k}-\sum_{z\in\Z^d\cap[-m,m]^d}S^{k-r,z}\biggr\|_{\R^h}>\frac\eps2|\Lm^{(2m+1)k}|\biggr)\biggr\}\nonumber\\
&\ge\limsup_{m\to\infty}\frac1{|\Lm^{(2m+1)k}|}\log\biggl\{\P\biggl(\frac1{(2m+1)^d}\sum_{z\in\Z^d\cap[-m,m]^d}\frac{S^{k-r,z}}{|\Lm^{k-r}|}\in\frac{|\Lm^k|}{|\Lm^{k-r}|}B(x,\eps/2)\biggr)\nonumber\\
&\qad-\exp(-C|\Lm^{(2m+1)k}|)\biggr\}.
\end{align}
The fourth inequality was obtained by~\eqref{eq:convexity_1_1}.
In order to estimate the right-hand side of~\eqref{eq:convexity_1_4}, we will use the lower bound of Cram\'er's large deviation theorem for
\[
\Biggl\{\frac1{(2m+1)^d}\sum_{z\in\Z^d\cap[-m,m]^d}\frac{S^{k-r,z}}{|\Lm^{k-r}|}\Biggr\}_{m\in\N}
\]
controlled by $J_{k-r}$:
\begin{align*}
&\liminf_{m\to\infty}\frac1{(2m+1)^d}\log\P\biggl(\frac1{(2m+1)^d}\sum_{z\in\Z^d\cap[-m,m]^d}\frac{S^{k-r,z}}{|\Lm^{k-r}|}\in\frac{|\Lm^k|}{|\Lm^{k-r}|}B(x,\eps/2)\biggr)\\
&\ge-\inf\left\{J_{k-r}(y)\relmiddle|y\in\frac{|\Lm^k|}{|\Lm^{k-r}|}B(x,\eps/2)\right\}.
\end{align*}
Since the right-hand side of the above inequality is bounded below by $-J_{k-r}(x)$ from~\eqref{eq:convexity_1_2}, we obtain
\begin{equation}\label{eq:convexity_1_5}
\P\biggl(\frac1{(2m+1)^d}\sum_{z\in\Z^d\cap[-m,m]^d}\frac{S^{k-r,z}}{|\Lm^{k-r}|}\in\frac{|\Lm^k|}{|\Lm^{k-r}|}B(x,\eps/2)\biggr)
\ge\exp(-\{J_{k-r}(x)+1\}(2m+1)^d)
\end{equation}
for sufficiently large $m$.
Combining~\eqref{eq:convexity_1_4} and~\eqref{eq:convexity_1_5},
\begin{align*}
-c
&\ge\limsup_{m\to\infty}\frac1{|\Lm^{(2m+1)k}|}\log\biggl\{\exp(-\{J_{k-r}(x)+1\}(2m+1)^d)-\exp(-C|\Lm^{(2m+1)k}|)\biggr\}\nonumber\\
&=\limsup_{m\to\infty}\frac1{|\Lm^{(2m+1)k}|}\log\biggl\{\exp\biggl(-\frac{J_{k-r}(x)+1}{|\Lm^k|}|\Lm^{(2m+1)k}|\biggr)-\exp(-C|\Lm^{(2m+1)k}|)\biggr\}\nonumber\\
&=-\frac{J_{k-r}(x)+1}{|\Lm^k|}.
\end{align*}
For the last line, we note~\eqref{eq:convexity_1_3}.
Letting $k\to\infty$ in the above inequality along a suitable subsequence so that~\eqref{eq:convexity_1_3} is satisfied, we conclude that $c\le\liminf_{k\to\infty}J_k(x)/|\Lm^k|$, which completes the proof since $c<I(x)$ is arbitrary.

(2)~Let $x_1\neq x_2\in\R^h$ and $t\in(0,1)$ be fixed, and set $x\coloneqq tx_1+(1-t)x_2$.
We will show
\begin{equation}\label{eq:convexity_2_1}
tI(x_1)+(1-t)I(x_2)\ge I(x).
\end{equation}
We may assume that $I(x_1)<\infty$ and $I(x_2)<\infty$.
We first take an integer $r\ge0$ and random variables $\{S^{n,z}\}_{n\in\N,z\in\Z^d}$ in the definition of the exponentially near additivity of $\{S^n\}_{n\in\N}$.
Let $l\in\N$ be fixed.
We set $\dl>0$ such that $ty_1+(1-t)y_2\in\bar B(x,l^{-1})$ for any $y_1\in\bar B(x_1,\dl)$ and $y_2\in\bar B(x_2,\dl)$.
From the exponentially near additivity of $\{S^n\}_{n\in\N}$, letting $C>\max\{I(x_1),I(x_2)\}$, we can take an integer $K_l>r$ such that
\begin{equation}\label{eq:convexity_2_2}
\P\biggl(\biggl\|S^{(2m+1)k}-\sum_{z\in\Z^d\cap[-m,m]^d}S^{k-r,z}\biggr\|_{\R^h}>\frac\dl3|\Lm^{(2m+1)k}|\biggr)\le\exp(-C|\Lm^{(2m+1)k}|)
\end{equation}
holds for all $k\ge K_l$ and $m\in\N$.
We may choose the integer $K_l$ large enough so that for $i=1,2$,
\begin{equation}\label{eq:convexity_2_3}
\frac{|\Lm^{K_l}|}{|\Lm^{K_l-r}|}B(x_i,2\dl/3)\subset\bar B(x_i,\dl).
\end{equation}
Then, by the large deviation principle for $\{S^n/|\Lm^n|\}_{n\in\N}$ controlled by $I$ and Cram\'er's large deviation theorem for
\[
\Biggl\{\frac1{(2m+1)^d}\sum_{z\in\Z^d\cap[-m,m]^d}\frac{S^{k-r,z}}{|\Lm^{k-r}|}\Biggr\}_{m\in\N}
\]
controlled by $J_{k-r}$, for any $k\ge K_l$ and $i=1,2$,
\begin{align*}
&-I(x_i)\\
&\le-\inf_{y\in B(x_i,\dl/3)}I(y)\\
&\le\liminf_{m\to\infty}\frac1{|\Lm^{(2m+1)k}|}\log\P\biggl(\frac{S^{(2m+1)k}}{|\Lm^{(2m+1)k}|}\in B(x_i,\dl/3)\biggr)\\
&\le\limsup_{m\to\infty}\frac1{|\Lm^{(2m+1)k}|}\log\biggl[\P\biggl(\frac1{(2m+1)^d}\sum_{z\in\Z^d\cap[-m,m]^d}\frac{S^{k-r,z}}{|\Lm^{k-r}|}\in\frac{|\Lm^k|}{|\Lm^{k-r}|}B(x_i,2\dl/3)\biggr)\\
&\qad+\P\biggl(\biggl\|S^{(2m+1)k}-\sum_{z\in\Z^d\cap[-m,m]^d}S^{k-r,z}\biggr\|_{\R^h}>\frac\dl3|\Lm^{(2m+1)k}|\biggr)\biggr]\\
&\le\limsup_{m\to\infty}\frac1{|\Lm^{(2m+1)k}|}\log\biggl[\P\biggl(\frac1{(2m+1)^d}\sum_{z\in\Z^d\cap[-m,m]^d}\frac{S^{k-r,z}}{|\Lm^{k-r}|}\in\bar B(x_i,\dl)\biggr)+\exp(-C|\Lm^{(2m+1)k}|)\biggr]\\
&\le\limsup_{m\to\infty}\frac1{|\Lm^{(2m+1)k}|}\log\Bigl[\exp\Bigl(-\Bigl\{\inf_{y\in\bar B(x_i,\dl)}J_{k-r}(y)-1\Bigr\}(2m+1)^d\Bigr)+\exp(-C|\Lm^{(2m+1)k}|)\Bigr]\\
&=-\min\biggl\{\frac{\inf_{y\in\bar B(x_i,\dl)}J_{k-r}(y)-1}{|\Lm^k|},C\biggr\}.
\end{align*}
The fourth inequality follows from~\eqref{eq:convexity_2_2} and~\eqref{eq:convexity_2_3}.
Since $C>\max\{I(x_1),I(x_2)\}$, we obtain
\begin{equation}\label{eq:convexity_2_4}
I(x_i)\ge\frac{\inf_{y\in\bar B(x_i,\dl)}J_{k-r}(y)-1}{|\Lm^k|}
\end{equation}
for any $k\ge K_l$ and $i=1,2$.
Therefore, it follows from the convexity of $J_{k-r}$ and the setting of $\dl$ that
\begin{align*}
tI(x_1)+(1-t)I(x_2)
&\ge\frac{t\inf_{y\in\bar B(x_1,\dl)}J_{k-r}(y)+(1-t)\inf_{y\in\bar B(x_2,\dl)}J_{k-r}(y)-1}{|\Lm^k|}\\
&\ge\frac{\inf_{y\in\bar B(x,\eps)}J_{k-r}(y)-1}{|\Lm^k|}
\end{align*}
for any $k\ge K_l$.
From the lower semicontinuity of $J_{k-r}$, there exists $y_l\in\bar B(x,l^{-1})$ such that
\[
J_{k-r}(y_l)=\inf_{y\in\bar B(x,\eps)}J_{k-r}(y).
\]
From the above discussion, there exists a sequence $\{K_l\}_{l\in\N}$ of integers $K_l>r$ and $\{y_l\}_{l\in\N}\subset\R^h$ such that $\lim_{l\to\infty}y_l=x$ and
\[
tI(x_1)+(1-t)I(x_2)
\ge\frac{J_{k-r}(y_l)-1}{|\Lm^k|}
\]
for any $l\in\N$ and $k\ge K_l$.
By taking $k\to\infty$ in the above inequality and using~\eqref{eq:convexity}, we have
\[
tI(x_1)+(1-t)I(x_2)
\ge\liminf_{k\to\infty}\frac{J_k(y_l)}{|\Lm^k|}
\ge I(y_l)
\]
for any $l\in\N$.
Letting $l\to\infty$ in the above inequality, we obtain~\eqref{eq:convexity_2_1} from the lower semicontinuity of $I$, which completes the proof.
\end{proof}

Lastly, we prove Lemma~\ref{lem:zero_point}.
\begin{proof}[Proof of Lemma~\ref{lem:zero_point}]
We fist show $I(\widehat S)=0$.
Let $\eps>0$ be fixed.
From the lower semicontinuity of $I$, there exists $\dl>0$ such that $x\in\bar B(\widehat S,\dl)$ implies $I(x)\ge I(\widehat S)-\eps$.
Then, by the assumption,
\[
0=\limsup_{n\to\infty}\log\P\biggl(\frac{S^n}{|\Lm^n|}\in\bar B(\widehat S,\dl)\biggr)
\le-\inf_{x\in\bar B(\widehat S,\dl)}I(x)
\le-I(\widehat S)+\eps.
\]
Since $\eps>0$ is arbitrary, we obtain $I(\widehat S)=0$.

Next, suppose that $I(x)=0$.
Let $\dl>0$ be fixed.
By almost the same calculation to obtain~\eqref{eq:convexity_2_4}, we can conclude that there exists $K\in\N$ such that
\[
0=I(x)\ge\frac{\inf_{y\in\bar B(x,\dl)}J_{k-r}(y)-\rho}{|\Lm^k|}
\]
for any $k\ge K$ and $\rho>0$.
Since $\rho>0$ is arbitrary, for any $k\ge K$,
\[
\inf_{y\in\bar B(x,\dl)}J_{k-r}(y)=0,
\]
which implies that $\E[S^{k-r}]/|\Lm^{k-r}|\in\bar B(x,\dl)$ since $\E[S^{k-r}]/|\Lm^{k-r}|$ is a unique zero point of the rate function $J_{k-r}$.
Using~\eqref{eq:spatial_const}, we obtain $\widehat S\in\bar B(x,\dl)$.
Since $\dl>0$ is arbitrary, it must be $x=\widehat S$.
\end{proof}

\section{Large deviation principle for persistent Betti numbers}\label{sec:LDP_PB}
In this section, we prove Theorems~\ref{thm:LLN_PB} and~\ref{thm:LDP_PBtuple}.
In Subsection~\ref{ssec:property_PB}, we estimate the difference of the persistent Betti numbers of two cubical filtrations in $\R^d$.
In Subsection~\ref{ssec:LLN-LDP_PB}, we check the exponential regularity and exponentially near additivity of persistent Betti numbers using the estimate, and prove Theorems~\ref{thm:LLN_PB} and~\ref{thm:LDP_PBtuple} by applying Corollary~\ref{cor:LDP_ernap}.

\subsection{Properties of persistent Betti number}\label{ssec:property_PB}
We start with the simple bound of the persistent Betti number of a cubical filtration in $\R^d$.
\begin{prop}\label{prop:triv_bound_PB}
Let $\X=\{X(t)\}_{t\ge0}$ be a bounded cubical filtration in $\R^d$.
Fix an integer $0\le q<d$ and $0\le s\le t<\infty$.
Then,
\begin{equation}\label{eq:triv_bound_PB}
\b_q^{\X}(s,t)\le\b_q(X(s))\le\#\cK^d_q(X(s)).
\end{equation}
\end{prop}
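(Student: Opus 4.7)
The proposition decomposes into two inequalities, each of which is essentially a dimension-counting statement, so the plan is to treat them in turn.

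For the first inequality $\b_q^\X(s,t) \le \b_q(X(s))$, I would appeal directly to the definition of the persistent Betti number given in the excerpt. Recall $\b_q^\X(s,t) = \rank(\iota_s^t)_*$, where $(\iota_s^t)_* \colon H_q(X(s)) \to H_q(X(t))$ is the induced map. Since the rank of any linear map is at most the dimension of its domain, we obtain
\[
\b_q^\X(s,t) = \rank(\iota_s^t)_* \le \dim H_q(X(s)) = \b_q(X(s)).
\]
(Alternatively, one can read this off the expression \eqref{eq:PB}, since $Z_q(X(s)) \cap B_q(X(t)) \supset B_q(X(s))$ implies the quotient on the right of \eqref{eq:PB} has dimension no greater than $\dim Z_q(X(s))/B_q(X(s)) = \b_q(X(s))$.)

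For the second inequality $\b_q(X(s)) \le \#\cK^d_q(X(s))$, I would use the chain of containments $B_q(X(s)) \subset Z_q(X(s)) \subset C_q(X(s))$ to get
\[
\b_q(X(s)) = \dim\bigl(Z_q(X(s))/B_q(X(s))\bigr) \le \dim C_q(X(s)).
\]
By the definition of the $q$th cubical chain group recalled in Subsection~\ref{ssec:cube_hom}, $\cK^d_q(X(s))$ is a basis of $C_q(X(s))$ (when nonempty; otherwise both sides are zero), so $\dim C_q(X(s)) = \#\cK^d_q(X(s))$. Chaining the two bounds completes the proof.

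There is essentially no obstacle here; the statement is an elementary bookkeeping fact built into the definitions, included presumably so it can be invoked later. The only minor care needed is to handle the edge cases ($\cK^d_q(X(s))=\emptyset$ giving the trivial $0 \le 0$, and noting that boundedness of $\X$ ensures all these dimensions are finite so the inequalities are between honest integers), which I would dispatch in a single sentence.
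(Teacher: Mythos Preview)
Your proof is correct and essentially identical to the paper's: the paper uses the quotient formulation \eqref{eq:PB} with $Z_q(X(s))\cap B_q(X(t))\supset B_q(X(s))$ for the first inequality (your parenthetical alternative), and the chain $\b_q(X(s))\le\dim Z_q(X(s))\le\dim C_q(X(s))=\#\cK^d_q(X(s))$ for the second. Your primary argument via $\rank(\iota_s^t)_*\le\dim H_q(X(s))$ is an equally valid trivial rephrasing.
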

\begin{proof}
Since $Z_q(X(s))\cap B_q(X(t))\supset Z_q(X(s))\cap B_q(X(s))=B_q(X(s))$, it follows from~\eqref{eq:PB} that
\[
\b_q^\X(s,t)
=\dim\frac{Z_q(X(s))}{Z_q(X(s))\cap B_q(X(t))}
\le\dim\frac{Z_q(X(s))}{B_q(X(s))}
=\b_q(X(s)).
\]
Furthermore,
\[
\b_q(X(s))
=\dim\frac{Z_q(X(s))}{B_q(X(s))}
\le\dim Z_q(X(s))
\le\dim C_q(X(s))
=\#\cK^d_q(X(s)).\qedhere
\]
\end{proof}

Next, we estimate the difference of the persistent Betti numbers of two cubical filtrations in $\R^d$.
We will use the following basic fact in linear algebra.
\begin{lem}\label{lem:basic_linear}
Let
\[
D=\left[\begin{array}{cc}
A &B\\
0 &C
\end{array}\right]
\]
be a block matrix, and let $l$ be the number of columns in $B$ and $C$.
Then,
\[
\rank A\le\rank D\le\rank A+l
\quad\text{and}\quad
\dim\ker A\le\dim\ker D\le\dim\ker A+l.
\]
\end{lem}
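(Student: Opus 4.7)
The plan is to prove the two pairs of inequalities by exploiting the block-upper-triangular structure of $D$. First I would establish the rank inequalities directly from the definition of column space, and then deduce the kernel inequalities from the rank--nullity theorem.

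For the rank bounds, let $n$ be the number of columns of $A$, so that $D$ has $n + l$ columns in total. The first $n$ columns of $D$ are obtained by appending zero rows beneath the columns of $A$, and appending zero rows does not alter the dimension of the column space. Hence these first $n$ columns of $D$ span a subspace of $\im D$ of dimension exactly $\rank A$, which gives $\rank A \le \rank D$. For the upper bound, I would use that $\im D$ is the sum of the span of the first $n$ columns of $D$ (of dimension $\rank A$) and the span of the remaining $l$ columns (of dimension at most $l$), so that $\rank D \le \rank A + l$.

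For the kernel bounds, I would apply the rank--nullity theorem to $A$ (with $n$ columns) and to $D$ (with $n+l$ columns). This gives $\dim\ker A = n - \rank A$ and $\dim\ker D = (n+l) - \rank D$, so that
\[
\dim\ker D - \dim\ker A = l + \rank A - \rank D.
\]
Plugging in the rank inequalities already proved immediately yields $0 \le \dim\ker D - \dim\ker A \le l$, which is the desired statement.

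Since the claim is purely a bookkeeping exercise in elementary linear algebra, I do not anticipate any genuine obstacle. The only care needed is to track which block supplies which columns and to observe that appending zero rows preserves rank; once this is in place, the kernel statement is essentially free from rank--nullity.
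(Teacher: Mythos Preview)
Your argument is correct and matches the paper's approach exactly: the rank inequalities come from the column structure of the block matrix, and the kernel inequalities then follow from rank--nullity. The paper is just more terse (calling the rank statement ``trivial''), while you spell out the column-space reasoning explicitly.
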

\begin{proof}
Since the rank coincides with the number of linearly independent columns, the first conclusion is trivial.
The second conclusion follows immediately from the first conclusion with the rank-nullity theorem: letting $k$ be the number of columns in $A$,
\[
\rank D+\dim\ker D=k+l
\quad\text{and}\quad
\rank A+\dim\ker A=k.\qedhere
\]
\end{proof}
The following is a generalization of Lemma~3.1 of~\cite{HT18} to persistent Betti numbers as well as an analogous result of Lemma~2.11 in~\cite{HST18}, where they consider filtrations of simplicial complexes.
\begin{prop}\label{prop:PB_diff}
Let $\X=\{X(t)\}_{t\ge0}$ and $\Y=\{Y(t)\}_{t\ge0}$ be bounded cubical filtrations in $\R^d$ with $X(t)\subset Y(t)$ for every $t\ge0$.
Fix an integer $0\le q<d$ and $0\le s\le t<\infty$.
Then,
\begin{equation}\label{eq:PB_diff}
|\b_q^{\Y}(s,t)-\b_q^\X(s,t)|
\le\#(\cK^d_q(Y(s))\setminus\cK^d_q(X(s)))+\#(\cK^d_{q+1}(Y(t))\setminus\cK^d_{q+1}(X(t))).
\end{equation}
\end{prop}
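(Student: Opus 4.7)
The plan is to reduce the statement to three applications of Lemma~\ref{lem:basic_linear}, via an identity expressing the persistent Betti number as a combination of ranks of three linear maps, each written in the canonical elementary-cube basis so that the block structure transfers cleanly from $\X$ to $\Y$.

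First, I introduce the auxiliary linear map
\[
M^\X_{q,s,t}\colon C_{q+1}(X(t))\oplus C_q(X(s))\to C_q(X(t)),\qquad (c,z)\mapsto \del_{q+1}^{X(t)}c-z,
\]
where $z\in C_q(X(s))$ is viewed in $C_q(X(t))$ via the canonical inclusion. A direct computation shows that $(c,z)\in\ker M^\X_{q,s,t}$ iff $z=\del_{q+1}^{X(t)}c$ lies in $C_q(X(s))$, and in that case $z$ is automatically a cycle, since $\del_q^{X(t)}(\del_{q+1}^{X(t)}c)=0$ and the restriction of $\del_q^{X(t)}$ to $C_q(X(s))$ equals $\del_q^{X(s)}$. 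Hence $\dim\ker M^\X_{q,s,t}=\dim\ker\del_{q+1}^{X(t)}+\dim(Z_q(X(s))\cap B_q(X(t)))$. Applying rank-nullity together with~\eqref{eq:PB} yields the key identity
\[
\b_q^\X(s,t)=\rank M^\X_{q,s,t}-\rank\del_q^{X(s)}-\rank\del_{q+1}^{X(t)},
\]
and the analogous identity holds for $\Y$.

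Next, I compare the three ranks on $\X$ and $\Y$. Ordering the canonical basis of each chain group so that the cubes already contained in the smaller set (e.g., $X(s)$ or $X(t)$) come first and the extras in $Y\setminus X$ come second, and using the fact that every elementary cube contains all of its faces, $\del$ sends each such ``$X$-column'' into the span of ``$X$-rows'' only. Consequently each of $M^\Y_{q,s,t}$, $\del_q^{Y(s)}$, $\del_{q+1}^{Y(t)}$ is block upper-triangular with the corresponding $\X$-matrix in its top-left block. Setting $L_1\coloneqq\#(\cK^d_{q+1}(Y(t))\setminus\cK^d_{q+1}(X(t)))$ and $L_2\coloneqq\#(\cK^d_q(Y(s))\setminus\cK^d_q(X(s)))$, Lemma~\ref{lem:basic_linear} supplies
\begin{align*}
0&\le\rank M^\Y_{q,s,t}-\rank M^\X_{q,s,t}\le L_1+L_2,\\
0&\le\rank\del_q^{Y(s)}-\rank\del_q^{X(s)}\le L_2,\\
0&\le\rank\del_{q+1}^{Y(t)}-\rank\del_{q+1}^{X(t)}\le L_1.
\end{align*}
Subtracting the $\X$-identity from its $\Y$-analogue and inserting these three rank bounds shows that $\b_q^\Y(s,t)-\b_q^\X(s,t)\in[-L_1-L_2,\,L_1+L_2]$, which is precisely~\eqref{eq:PB_diff}.

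The main obstacle is finding the correct identity. What distinguishes persistent Betti numbers from ordinary ones is the quantity $\dim(Z_q(X(s))\cap B_q(X(t)))$, which is not the rank of a single boundary map and does not obviously admit a description in terms of canonical bases compatible with the inclusion $\X\subset\Y$. The map $M^\X_{q,s,t}$ is tailored so that the condition ``$\del_{q+1}^{X(t)}c\in C_q(X(s))$'' is encoded in its kernel, while its entries vanish on any row indexed by an extra cube in $Y\setminus X$ applied to a column indexed by an $X$-cube; together these two properties deliver the block-triangular decomposition required above.
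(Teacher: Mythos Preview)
Your argument is correct, but it takes a different route from the paper. The paper works directly with the expression~\eqref{eq:PB}: writing
\[
\b_q^{\Y}(s,t)-\b_q^{\X}(s,t)=\dim\frac{Z_q(Y(s))}{Z_q(X(s))}-\dim\frac{Z_q(Y(s))\cap B_q(Y(t))}{Z_q(X(s))\cap B_q(X(t))},
\]
it bounds the second quotient above by $\dim(Z_q(Y(s))/Z_q(X(s)))+\dim(B_q(Y(t))/B_q(X(t)))$ via an elementary inclusion argument, and then invokes Lemma~\ref{lem:basic_linear} twice (once on kernels of $\del_q$, once on ranks of $\del_{q+1}$). Your approach instead packages the awkward term $\dim(Z_q(X(s))\cap B_q(X(t)))$ into the kernel of a single auxiliary map $M^\X_{q,s,t}$, yielding a clean three-rank identity for $\b_q^\X(s,t)$, and then applies Lemma~\ref{lem:basic_linear} three times. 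The paper's path is shorter and more transparent, since it avoids constructing any new map; your path has the minor virtue that every step is a rank comparison of matrices in block-triangular form, so the role of Lemma~\ref{lem:basic_linear} is uniform throughout. Both arrive at the same bound with the same dependence on Lemma~\ref{lem:basic_linear}.
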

\begin{proof}
From~\eqref{eq:PB},
\begin{align*}
\b_q^{\Y}(s,t)-\b_q^\X(s,t)
&=\{\dim(Z_q(Y(s)))-\dim(Z_q(Y(s))\cap B_q(Y(t)))\}\\
&\qad-\{\dim(Z_q(X(s)))-\dim(Z_q(X(s))\cap B_q(X(t)))\}\\
&=\dim\biggl(\frac{Z_q(Y(s))}{Z_q(X(s))}\biggr)-\dim\biggl(\frac{Z_q(Y(s))\cap B_q(Y(t))}{Z_q(X(s))\cap B_q(X(t))}\biggr).
\end{align*}
Since
\[
\dim\biggl(\frac{Z_q(Y(s))\cap B_q(Y(t))}{Z_q(X(s))\cap B_q(X(t))}\biggr)
\le\dim\biggl(\frac{Z_q(Y(s))}{Z_q(X(s))}\biggr)+\dim\biggl(\frac{B_q(Y(t))}{B_q(X(t))}\biggr)
\]
from an elementary calculation, we obtain
\[
|\b_q^{\Y}(s,t)-\b_q^\X(s,t)|
\le\dim\biggl(\frac{Z_q(Y(s))}{Z_q(X(s))}\biggr)+\dim\biggl(\frac{B_q(Y(t))}{B_q(X(t))}\biggr).
\]
Using Lemma~\ref{lem:basic_linear}, we have
\begin{align*}
\dim\biggl(\frac{Z_q(Y(s))}{Z_q(X(s))}\biggr)
&=\dim\ker\del_q^{Y(s)}-\dim\ker\del_q^{X(s)}
\le\#(\cK^d_q(Y(s))\setminus\cK^d_q(X(s)))
\shortintertext{and}
\dim\biggl(\frac{B_q(Y(t))}{B_q(X(t))}\biggr)
&=\rank\del_{q+1}^{Y(t)}-\rank\del_{q+1}^{X(t)}
\le\#(\cK^d_{q+1}(Y(t))\setminus\cK^d_{q+1}(X(t))).
\end{align*}
Combining the above estimates, the conclusion follows.
\end{proof}
The following is useful when we estimate the right-hand sides of~\eqref{eq:triv_bound_PB} and~\eqref{eq:PB_diff}.
This is an easy consequence of the fact that each elementary $d$-cube contains exactly $\binom dq2^{d-q}$ number of elementary $q$-cubes for each integer $0\le q\le d$.
\begin{lem}[{\cite[Lemma~3.2]{HT18}}]\label{lem:no_cube-vol}
Let $X$ and $Y$ be bounded cubical sets with $X\subset Y$.
Fix an integer $0\le q\le d$.
Suppose that the subset $Y\setminus X\subset\R^d$ is covered by $v$ number of elementary $d$-cubes.
Then, $\#(\cK^d_q(Y)\setminus\cK^d_q(X))\le\binom dq2^{d-q}v$.
In particular, $\#(\cK^d_q(Y)\setminus\cK^d_q(X))\le\#(\cK^d(Y)\setminus\cK^d(X))\le3^dv$.
\end{lem}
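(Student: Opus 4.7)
The plan is to establish the bound by counting, following the hint in the statement. The approach has two ingredients: a routine combinatorial count of elementary $q$-cubes inside a single elementary $d$-cube, and a geometric containment claim asserting that every $Q\in\cK^d_q(Y)\setminus\cK^d_q(X)$ lies inside one of the $v$ covering elementary $d$-cubes. Together these immediately yield the first inequality.

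For the combinatorial count, I would verify directly that an elementary $d$-cube $D=[m_1,m_1+1]\times\cdots\times[m_d,m_d+1]$ contains exactly $\binom{d}{q}2^{d-q}$ elementary $q$-cubes: pick $q$ of the $d$ coordinate directions to remain nondegenerate (giving $\binom{d}{q}$ choices), and in each of the remaining $d-q$ directions degenerate the interval to one of the two integer endpoints $\{m_i\}$ or $\{m_i+1\}$ (giving $2^{d-q}$ choices). That these are the only possibilities follows from the requirement that each factor $I_j$ of a sub-elementary-cube satisfy $I_j\subset[m_j,m_j+1]$.

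For the containment claim, let $D_1,\ldots,D_v$ be elementary $d$-cubes covering $Y\setminus X$ and fix $Q\in\cK^d_q(Y)\setminus\cK^d_q(X)$. Since $X$ is a finite, hence closed, union of elementary cubes, $Q\setminus X$ is nonempty and relatively open in $Q$, so it meets the (dense) relative interior of $Q$; pick a witness $x^*$ in this intersection. Then $x^*\in Y\setminus X$, so $x^*\in D_i$ for some $i$, and I argue coordinatewise that $Q\subset D_i$. In each nondegenerate direction of $Q$, the coordinate of $x^*$ lies strictly between two integers, which pins down the corresponding direction of $D_i$ to be exactly the interval of $Q$; in each degenerate direction $\{l\}$ of $Q$, the integer $l$ lies in $D_i$'s interval and so does $\{l\}$. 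This proves $\cK^d_q(Y)\setminus\cK^d_q(X)\subset\bigcup_{i=1}^v\cK^d_q(D_i)$, and the main bound follows.

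The ``in particular'' clause follows by summing over $q$: since elementary cubes of distinct dimensions are disjoint as subsets of $\cK^d$, we have $\#(\cK^d(Y)\setminus\cK^d(X))=\sum_{q=0}^d\#(\cK^d_q(Y)\setminus\cK^d_q(X))$, and the binomial identity $\sum_{q=0}^d\binom{d}{q}2^{d-q}=3^d$ closes the estimate. The only step requiring care is producing the witness $x^*$ in the relative interior of $Q$; restricting to the interior is essential because a boundary point of $Q$ could sit in an elementary $d$-cube that does not contain all of $Q$ (e.g.\ $Q=[0,1]\subset\R$ with $x^*=0\in D=[-1,0]$), and the coordinate argument would then fail.
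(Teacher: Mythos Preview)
Your proof is correct and follows exactly the approach the paper indicates: the paper does not give a full proof but only remarks that the lemma ``is an easy consequence of the fact that each elementary $d$-cube contains exactly $\binom dq2^{d-q}$ number of elementary $q$-cubes,'' citing \cite{HT18}. You have supplied precisely the missing details---the combinatorial count, the containment argument via a relative-interior witness, and the binomial sum---and your care about choosing $x^*$ in the relative interior (and your counterexample explaining why a boundary point would not suffice) is the one genuinely nontrivial step.
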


\subsection{Proofs of Theorems~\ref{thm:LLN_PB} and~\ref{thm:LDP_PBtuple}}\label{ssec:LLN-LDP_PB}
In this subsection, let $\X=\{X(t)\}_{t\ge0}$ be a random cubical filtration in $\R^d$ satisfying Assumptions~\ref{asm:stationarity} and~\ref{asm:loc_dep}.
We fix an integer $0\le q<d$ and a finite family $\{(s_i,t_i)\}_{i=1}^h$ with $0\le s_i\le t_i<\infty$, and write $S^n=(\b_q^{\X^n}(s_1,t_1),\ldots,\b_q^{\X^n}(s_h,t_h))$ for each $n\in\N$.
We herein prove Theorems~\ref{thm:LLN_PB} and~\ref{thm:LDP_PBtuple} by applying Corollary~\ref{cor:LDP_ernap}. 
\begin{proof}[Proofs of Theorems~\ref{thm:LLN_PB} and~\ref{thm:LDP_PBtuple}]
By the Cauchy--Schwarz inequality, we have
\begin{align*}
|\Lm^n|^{-1}\log\E[\exp(\langle\lm,S^n\rangle_{\R^h})]
&\le|\Lm^n|^{-1}\log\E\biggl[\exp\biggl(\|\lm\|_{\R^h}\biggl(\sum_{i=1}^h\b_q^{\X^n}(s_i,t_i)^2\biggr)^{1/2}\biggr)\biggr]\\
&\le\sqrt h\|\lm\|_{\R^h}\frac{\#\cK^d_q(\Lm^n)}{|\Lm^n|}\\
&\le3^d\sqrt h\|\lm\|_{\R^h}
\end{align*}
for any $\lm=(\lm_1,\ldots,\lm_h)\in\R^h$.
Here, the second inequality follows from Proposition~\ref{prop:triv_bound_PB}.
For the third inequality, we used Lemma~\ref{lem:no_cube-vol}.
In order to apply Corollary~\ref{cor:LDP_ernap} to $\{S^n\}_{n\in\N}$, we additionally require the exponential regularity and exponentially near additivity of $\{S^n\}_{n\in\N}$.

We first show the exponential regularity of $\{S^n\}_{n\in\N}$.
Let $k\in\N$ be fixed, and let $m_n$ be the unique integer satisfying $(2m_n+1)k\le n<(2m_n+3)k$ for each $n\in\N$.
From Proposition~\ref{prop:PB_diff} and Lemma~\ref{lem:no_cube-vol}, it holds that
\begin{align*}
&\bigl\|S^n-S^{(2m_n+1)k}\bigr\|_{\R^h}\\
&=\biggl(\sum_{i=1}^h\bigl\{\b_q^{\X^n}(s_i,t_i)-\b_q^{\X^{(2m_n+1)k}}(s_i,t_i)\bigr\}^2\biggr)^{1/2}\\
&\le\biggl(\sum_{i=1}^h\bigl\{\#(\cK^d_q(X^n(s_i))\setminus\cK^d_q(X^{(2m_n+1)k}(s_i)))+\#(\cK^d_{q+1}(X^n(t_i))\setminus\cK^d_{q+1}(X^{(2m_n+1)k}(t_i)))\bigr\}^2\biggr)^{1/2}\\
&\le\sqrt h\biggl\{\binom dq2^{d-q}+\binom d{q+1}2^{d-(q+1)}\biggr\}(|\Lm^n|-|\Lm^{(2m_n+1)k}|)\\
&\le3^d\sqrt h(|\Lm^n|-|\Lm^{(2m_n+1)k}|).
\end{align*}
Therefore, we obtain
\[
|\Lm^n|^{-1}\bigl\|S^n-S^{(2m_n+1)k}\bigr\|_{\R^h}
\le3^d\sqrt h\{1-((2m_n+1)k/n)^d\}.
\]
The right-hand side of the above inequality converges to zero as $n\to\infty$, which immediately implies the exponential regularity of $\{S^n\}_{n\in\N}$.

We turn to prove the exponentially near additivity of $\{S^n\}_{n\in\N}$.
We take the integer $R\ge0$ as in Assumption~\ref{asm:loc_dep}, and choose an integer $r$ such that $2r>R$.
For each $n\in\N$ and $z\in\Z^d$, define
\[
\Lm^{n,z}\coloneqq2(n+r)z+\Lm^n\text{, }\X^{n,z}\coloneqq\{X(t)\cap\Lm^{n,z}\}_{t\ge0}\text{, and }S^{n,z}\coloneqq(\b_q^{\X^{n,z}}(s_1,t_1),\ldots,\b_q^{\X^{n,z}}(s_h,t_h)).
\]
Note that $d_{\max}(\Lm^{n,z},\Lm^{n,z'})\ge2r>R$ for any distinct $z,z'\in\Z^d$.
Therefore, $\{S^{n,z}\}_{z\in\Z^d}$ are independent copies of $S^n$ for every $n\in\N$ from Assumptions~\ref{asm:stationarity} and~\ref{asm:loc_dep}.
For each $k,m\in\N$, we also define
\[
\widetilde\Lm^{(2m+1)k}\coloneqq\bigsqcup_{z\in\Z^d\cap[-m,m]^d}\Lm^{k-r,z}
\quad\text{and}\quad
\widetilde\X^{(2m+1)k}\coloneqq\{X(t)\cap\widetilde\Lm^{(2m+1)k}\}_{t\ge0}.
\]
Since $\widetilde\Lm^{(2m+1)k}$ is a disjoint union of $\Lm^{k-r,z}$'s, we have
\[
\b_q^{\widetilde\X^{(2m+1)k}}(s_i,t_i)
=\sum_{z\in\Z^d\cap[-m,m]^d}\b_q^{\X^{k-r,z}}(s_i,t_i)
\]
for every $i=1,2,\ldots,h$.
Therefore, again from Proposition~\ref{prop:PB_diff} and Lemma~\ref{lem:no_cube-vol}, it follows that
\begin{align*}
&\biggl\|S^{(2m+1)k}-\sum_{z\in\Z^d\cap[-m,m]^d}S^{k-r,z}\biggr\|_{\R^h}\\
&=\Bigl\|(\b_q^{\X^{(2m+1)k}}(s_1,t_1),\ldots,\b_q^{\X^{(2m+1)k}}(s_h,t_h))-(\b_q^{\widetilde\X^{(2m+1)k}}(s_1,t_1),\ldots,\b_q^{\widetilde\X^{(2m+1)k}}(s_h,t_h))\Bigr\|_{\R^h}\\
&=\biggl(\sum_{i=1}^h\bigl\{\b_q^{\X^{(2m+1)k}}(s_i,t_i)-\b_q^{\widetilde\X^{(2m+1)k}}(s_i,t_i)\bigr\}^2\biggr)^{1/2}\\
&\le\sqrt h\biggl\{\binom dq2^{d-q}+\binom d{q+1}2^{d-(q+1)}\biggr\}(|\Lm^{(2m+1)k}|-|\widetilde\Lm^{(2m+1)k}|)\\
&\le3^d\sqrt h(|\Lm^{(2m+1)k}|-|\widetilde\Lm^{(2m+1)k}|).
\end{align*}
Therefore, we obtain
\[
|\Lm^{(2m+1)k}|^{-1}\biggl\|S^{(2m+1)k}-\sum_{z\in\Z^d\cap[-m,m]^d}S^{k-r,z}\biggr\|_{\R^h}
\le3^d\sqrt h\biggl(1-\frac{|\widetilde\Lm^{(2m+1)k}|}{|\Lm^{(2m+1)k}|}\biggr)
\le3^d\sqrt h\biggl(1-\biggl(1-\frac rk\biggr)^d\biggr).
\]
Since the right-hand side of the above inequality converges to zero as $k\to\infty$, the exponentially near additivity of $\{S^n\}_{n\in\N}$ follows.

Consequently, applying Corollary~\ref{cor:LDP_ernap} with $S^n=(\b_q^{\X^n}(s_1,t_1),\ldots,\b_q^{\X^n}(s_h,t_h))$, we complete the proofs of both Theorems~\ref{thm:LLN_PB} and~\ref{thm:LDP_PBtuple}.
\end{proof}

\section{Large deviation principle for persistence diagrams}\label{sec:LDP_PDs}
In this section, we prove Theorems~\ref{thm:LLN_PD} and~\ref{thm:LDP_PD}.
In Subsection~\ref{ssec:stmt_rslt_PD}, we briefly describe a method of lifting the strong law of large numbers for persistent Betti numbers to persistence diagrams, developed in~\cite{HST18}.
Moreover, we develop a general method of lifting an LDP for the tuples of persistent Betti numbers to persistence diagrams (Theorem~\ref{thm:LDP_PBtuple-PD}).
Applying those methods, we prove Theorems~\ref{thm:LLN_PD} and~\ref{thm:LDP_PD} in Subsection~\ref{ssec:pf_LLN-LDP_PD}.
The proof of Theorem~\ref{thm:LDP_PBtuple-PD} is deferred to Subsection~\ref{ssec:pf_PBtuple-PD}. 

\subsection{Statement of result}\label{ssec:stmt_rslt_PD}
Before proceeding to our LDP result, we introduce a method of lifting the strong law of large numbers for persistent Betti numbers to persistence diagrams, developed in~\cite[Section~3 and Appendix~A]{HST18}.
The following theorem is immediately obtained by combining~\cite[Proposition~3.4]{HST18} and~\cite[Corollary~A.3]{HST18} together with the inclusion-exclusion principle.
\begin{thm}\label{thm:LLN_PB-PD}
Let $\{\xi^n\}_{n\in\N}$ be an $\cM(\Dl)$-valued process.
Assume that $\E[\xi^n]\in\cM(\Dl)$ for all $n\in\N$ and that for any $0\le s<t<\infty$, the limit
\[
c_{s,t}\coloneqq\lim_{n\to\infty}\E[\xi^n([0,s]\times(t,\infty])]
\]
exists in $[0,\infty)$.
Then, there exists a Radon measure $\widehat\xi\in\cM(\Dl)$ such that $\E[\xi^n]$ converges vaguely to $\widehat\xi$ as $n\to\infty$.
Assume further that for any $0\le s<t<\infty$,
\[
\xi^n([0,s]\times(t,\infty])\to c_{s,t}\quad\text{almost surely as $n\to\infty$.}
\]
Then, $\xi^n$ converges vaguely to $\widehat\xi$ almost surely as $n\to\infty$.
\end{thm}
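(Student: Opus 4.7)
The plan is to lift the quadrant-level convergence to rectangle-level convergence via inclusion-exclusion, extend the resulting set function to a Radon measure $\widehat\xi$ on $\Dl$, and then approximate $C_c(\Dl)$-test functions by step functions on rectangles, following the strategy of~\cite{HST18}.

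For any rectangle $R = (s_1, s_2] \times (t_1, t_2]$ with $0 \le s_1 \le s_2 \le t_1 < t_2 \le \infty$ (so that $R \subset \Dl$), the inclusion-exclusion identity
\[
\xi^n(R) = \xi^n([0,s_2]\times(t_1,\infty]) - \xi^n([0,s_2]\times(t_2,\infty]) - \xi^n([0,s_1]\times(t_1,\infty]) + \xi^n([0,s_1]\times(t_2,\infty])
\]
(with the convention $c_{s,\infty}:=0$, consistent with $[0,s]\times(\infty,\infty]=\emptyset$) shows that $\E[\xi^n(R)]$ converges to $\widehat\mu(R) := c_{s_2,t_1} - c_{s_2,t_2} - c_{s_1,t_1} + c_{s_1,t_2} \ge 0$. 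Such rectangles form a $\pi$-system generating the Borel $\sg$-algebra of $\Dl$, so a Carath\'eodory-type extension (or direct appeal to~\cite[Proposition~3.4 and Corollary~A.3]{HST18}) shows $\widehat\mu$ extends uniquely to a Radon measure $\widehat\xi$ on $\Dl$.

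For vague convergence of $\E[\xi^n]$ to $\widehat\xi$, fix $f \in C_c(\Dl)$ and a rectangle $R^*$ containing $\supp f$. Uniform continuity of $f$ lets one partition $R^*$ into finitely many small rectangles $R_1,\dots,R_N$ and pick scalars $a_i$ so that the step function $\varphi := \sum_{i=1}^N a_i \mathbf{1}_{R_i}$ satisfies $\|f-\varphi\|_\infty < \eps$. Then
\[
\left|\int_\Dl f\, d\E[\xi^n] - \int_\Dl f\,d\widehat\xi\right| \le \sum_{i=1}^N |a_i|\cdot\bigl|\E[\xi^n(R_i)] - \widehat\xi(R_i)\bigr| + \eps\bigl(\E[\xi^n(R^*)] + \widehat\xi(R^*)\bigr);
\]
the first term vanishes as $n\to\infty$ by rectangle-level convergence, and the second is $O(\eps)$ uniformly in $n$ since $\sup_n \E[\xi^n(R^*)] < \infty$. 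Letting $\eps \to 0$ gives the first assertion. For the almost sure statement, applying the inclusion-exclusion identity pathwise gives $\xi^n(R)\to\widehat\mu(R)$ a.s.\ for each such rectangle $R$; fixing a countable family $\mathcal R$ of rational-cornered rectangles rich enough to uniformly approximate every $f \in C_c(\Dl)$, the event $\Om_0 := \bigcap_{R\in\mathcal R}\{\xi^n(R)\to\widehat\mu(R)\}$ has full probability, and on $\Om_0$ the above approximation argument applies pathwise for every $f\in C_c(\Dl)$, yielding the almost sure vague convergence.

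The main technical obstacle is the extension step: verifying countable additivity and local finiteness so that $\widehat\mu$ actually extends to a Radon measure on $\Dl$. This is precisely the content of Proposition~3.4 and Corollary~A.3 of~\cite{HST18} in the present geometric setup, which is why the theorem is ``immediate'' once the inclusion-exclusion identity is in hand.
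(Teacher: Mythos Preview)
Your proposal is correct and follows precisely the approach the paper indicates: the paper states that Theorem~\ref{thm:LLN_PB-PD} is ``immediately obtained by combining~\cite[Proposition~3.4]{HST18} and~\cite[Corollary~A.3]{HST18} together with the inclusion-exclusion principle,'' and you have spelled out exactly that combination. The only minor point is that your $\pi$-system of half-open rectangles $(s_1,s_2]\times(t_1,t_2]$ misses the boundary line $\{0\}\times(0,\infty]\subset\Dl$, so to generate the full Borel $\sigma$-algebra (and to cover supports touching $s=0$) you should also allow left-closed rectangles $[0,s_2]\times(t_1,t_2]$, whose inclusion-exclusion is the simpler two-term identity $c_{s_2,t_1}-c_{s_2,t_2}$; this is exactly why the paper's histogram construction in Subsection~\ref{ssec:pf_PBtuple-PD} distinguishes the two rectangle types.
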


Next, we state our LDP result, which is useful to lift an LDP for the tuples of persistent Betti numbers to persistence diagrams.
In what follows in this section, let $\{a_n\}_{n\in\N}$ be a sequence of positive numbers tending to infinity as $n\to\infty$.
The following is the main result in this section.
\begin{thm}\label{thm:LDP_PBtuple-PD}
Let $\{\xi^n\}_{n\in\N}$ be an $\cM(\Dl)$-valued process.
Assume that
\begin{equation}\label{eq:LDP_PBtuple-PD}
\sup_{n\in\N}a_n^{-1}\log\E[\exp(a_n\lm\xi^n(\Dl))]<\infty
\end{equation}
for any $\lm>0$.
Assume further that for any finite family $\cP=\{(s_i,t_i)\}_{i=1}^h$ with $0\le s_i<t_i<\infty$, the $\R^h$-valued process
\[
\{(\xi^n([0,s_1]\times(t_1,\infty]),\xi^n([0,s_2]\times(t_2,\infty]),\ldots,\xi^n([0,s_h]\times(t_h,\infty]))\}_{n\in\N}
\]
satisfies an LDP with speed $a_n$ and a good rate function $I_\cP\colon\R^h\to[0,\infty]$.
Then, the $\cM(\Dl)$-valued process $\{\xi^n\}_{n\in\N}$ satisfies an LDP with speed $a_n$ and a good rate function $I\colon\cM(\Dl)\to[0,\infty]$.
Furthermore, the following statements hold.
\begin{enumerate}
\item Suppose that there exists a constant $K>0$ such that $\xi^n(\Dl)\le K$ for all $n\in\N$.
If $I_\cP$ has a unique zero point for each finite family $\cP=\{(s_i,t_i)\}_{i=1}^h$ with $0\le s_i<t_i<\infty$, then so does $I$.
\item If $I_\cP$ is convex for each finite family $\cP=\{(s_i,t_i)\}_{i=1}^h$ with $0\le s_i<t_i<\infty$, then for every $f\in C_c(\Dl)$, the limit
\[
\varphi(f)\coloneqq\lim_{n\to\infty}a_n^{-1}\log\E\biggl[\exp\biggl(a_n\int_\Dl f\,d\xi^n\biggr)\biggr]
\]
exists in $\R$, and it holds that
\[
I(\xi)=\sup_{f\in C_c(\Dl)}\biggl\{\int_\Dl f\,d\xi-\varphi(f)\biggr\}
\]
for any $\xi\in\cM(\Dl)$.
\end{enumerate}
\end{thm}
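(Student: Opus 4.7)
The approach is to combine the exponentially good approximation technique with the random-measure LDP framework from Appendices~\ref{sec:trans_LDP} and~\ref{sec:LDP_rdm_meas}. The plan is a three-stage lift: from the hypothesized LDPs for quadrant-mass tuples, to LDPs for integrals against test functions $f \in C_c(\Dl)$, and then to an LDP in $(\cM(\Dl),\text{vague})$.

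\emph{Step 1 (exponential tightness).} The vague topology makes $\{\xi \in \cM(\Dl) : \xi(\Dl) \le M\}$ relatively compact; thus Markov's inequality after exponentiation, together with hypothesis~\eqref{eq:LDP_PBtuple-PD}, yields exponential tightness of $\{\xi^n\}_{n\in\N}$ with speed $a_n$.

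\emph{Step 2 (finite-dimensional LDPs via exponentially good approximation).} For $f_1,\ldots,f_m \in C_c(\Dl)$, uniformly approximate each $f_j$ by a step function $f_j^\eps$ built from indicators of rectangles $(a,b]\times(c,d] \subset \Dl$. The inclusion--exclusion corollary of Theorem~\ref{thm:mass_quadrant} writes $\xi^n((a,b]\times(c,d])$ as an explicit linear combination of quadrant masses $\xi^n([0,s]\times(t,\infty])$, so the contraction principle applied to the hypothesis yields an LDP for $\{(\int_\Dl f_j^\eps \, d\xi^n)_{j=1}^m\}_n$. The bound
\[
\biggl|\int_\Dl f_j\,d\xi^n - \int_\Dl f_j^\eps\,d\xi^n\biggr| \le \|f_j - f_j^\eps\|_\infty \, \xi^n(\Dl),
\]
combined with the uniform exponential moment on $\xi^n(\Dl)$ from~\eqref{eq:LDP_PBtuple-PD}, makes this an exponentially good approximation in the sense of~\cite[Definition~4.2.14]{DZ10}. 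Letting $\eps\to 0$ transfers the LDP to $\{(\int_\Dl f_j\,d\xi^n)_{j=1}^m\}_n$.

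\emph{Step 3 (lift to $\cM(\Dl)$ and parts (1)--(2)).} Combining Step~1 with the finite-dimensional LDPs of Step~2, the abstract random-measure LDP criterion of Appendix~\ref{sec:LDP_rdm_meas} produces an LDP for $\{\xi^n\}$ in the vague topology with a good rate function $I$. For part~(1), assume $\xi^n(\Dl) \le K$; any $\xi$ with $I(\xi) = 0$ satisfies $\xi(\Dl) \le K$, and the contraction principle together with uniqueness of the zero of each $I_\cP$ forces all quadrant masses of $\xi$ to take the predetermined values shared by a unique candidate $\xi^*$; since the quadrants $[0,s]\times(t,\infty]$ generate the Borel $\sigma$-algebra of $\Dl$, we conclude $\xi = \xi^*$. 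For part~(2), the convexity of each $I_\cP$ lifts to convexity of $I$; applying Varadhan's lemma (using~\eqref{eq:LDP_PBtuple-PD} as the super-exponential tail condition) to the vaguely continuous functional $\xi \mapsto \int_\Dl f\,d\xi$ gives existence of $\varphi(f)$, and the locally convex analog of Theorem~\ref{thm:DZ_4.5.10} (the $\cM(\Dl)$-version used in Appendix~\ref{sec:LDP_rdm_meas}) paired with $C_c(\Dl)$ identifies $I$ with $\varphi^*$.

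The principal difficulty is Step~2: the indicator functions defining quadrants are not continuous, so the step-function approximation must be both uniformly close (for exponential goodness under the total-mass moment bound) and assembled from rectangles compatible with the inclusion--exclusion structure used by the hypothesis. Handling the boundary $\{t=\infty\}$ of $\Dl$ and ensuring grid lines avoid it is where the compact support of $f$ becomes essential.
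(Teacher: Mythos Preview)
Your overall architecture matches the paper's proof closely: the paper also proceeds by (i) building step-function approximations of $f\in C_c(\Dl)$ on a dyadic grid of rectangles, (ii) using inclusion--exclusion plus the contraction principle to get an LDP for the vector of rectangle masses (their ``histogram'' $\hist_l(\xi^n)$), (iii) passing to $(\xi^n f_1,\ldots,\xi^n f_m)$ by exponentially good approximation controlled through $\xi^n(\Dl)$, and (iv) invoking the random-measure LDP of Appendix~\ref{sec:LDP_rdm_meas}. Part~(2) is handled exactly as you describe, via the locally convex version of Theorem~\ref{thm:DZ_4.5.10}.

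There is, however, a genuine gap in your argument for Part~(1). You write that ``the contraction principle together with uniqueness of the zero of each $I_\cP$ forces all quadrant masses of $\xi$ to take the predetermined values.'' But the map $\xi\mapsto\xi([0,s]\times(t,\infty])$ is \emph{not} continuous in the vague topology, so you cannot contract from the LDP for $\{\xi^n\}$ back down to quadrant masses. The paper's route is different: it traces uniqueness forward through the approximation chain. Uniqueness of the zero of $I_\cP$ gives uniqueness of the zero of the histogram rate function $I_l$ (contraction along a genuine linear map), and then uniqueness of the zero of $I_{f_1,\ldots,f_m}$ follows---but only because the hypothesis $\xi^n(\Dl)\le K$ upgrades the exponentially good approximation to one where, for each $\dl>0$, the error probability is \emph{exactly} $-\infty$ once $l$ is large enough (not merely $\to-\infty$ as $l\to\infty$). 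This stronger form is what lets uniqueness survive the EGA step (see Remark~\ref{rem:EGA}(1) in the paper). Finally, uniqueness of the zero of each $I_{f_1,\ldots,f_m}$ forces uniqueness of the zero of $I$ via the explicit formula $I(\xi)=\sup I_{f_1,\ldots,f_m}((\xi f_j)_j)$. Your sketch misses this chain and in particular the specific reason why the deterministic bound $\xi^n(\Dl)\le K$ is needed.
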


\subsection{Proofs of Theorems~\ref{thm:LLN_PD} and~\ref{thm:LDP_PD}}\label{ssec:pf_LLN-LDP_PD}
In this subsection, we apply Theorems~\ref{thm:LLN_PB-PD} and~\ref{thm:LDP_PBtuple-PD} to prove Theorems~\ref{thm:LLN_PD} and~\ref{thm:LDP_PD}, respectively.
\begin{proof}[Proof of Theorem~\ref{thm:LLN_PD}]
Note first that $\xi_q^{\X^n}([0,s]\times(t,\infty])=\b_q^{\X^n}(s,t)$ for every $0\le s\le t<\infty$ from Theorem~\ref{thm:mass_quadrant}.
Therefore, Theorem~\ref{thm:LLN_PB} implies that for any $0\le s<t<\infty$,
\[
\lim_{n\to\infty}\frac{\E[\xi_q^{\X^n}([0,s]\times(t,\infty])]}{|\Lm^n|}=\widehat\b_q(s,t)
\]
and
\[
\frac{\xi_q^{\X^n}([0,s]\times(t,\infty])}{|\Lm^n|}\to\widehat\b_q(s,t)\quad\text{almost surely as $n\to\infty$.}
\]
Note also that $\E[\xi_q^{\X^n}]/|\Lm^n|\in\cM(\Dl)$.
Therefore, applying Theorem~\ref{thm:LLN_PB-PD} with $\xi^n=\xi_q^{\X^n}/|\Lm^n|$ and $a_n=|\Lm^n|$, we obtain the conclusion.
\end{proof}
\begin{proof}[Proof of Theorem~\ref{thm:LDP_PD}]
We fix a finite family $\cP=\{(s_i,t_i)\}_{i=1}^h$ with $0\le s_i<t_i<\infty$.
Since $\xi_q^{\X^n}([0,s]\times(t,\infty])=\b_q^{\X^n}(s,t)$ from Theorem~\ref{thm:mass_quadrant}, it follows from Theorem~\ref{thm:LDP_PBtuple} that the $\R^h$-valued process
\[
\biggl\{\biggl(\frac{\xi_q^{\X^n}([0,s_1]\times(t_1,\infty])}{|\Lm^n|},\frac{\xi_q^{\X^n}([0,s_2]\times(t_2,\infty])}{|\Lm^n|},\ldots,\frac{\xi_q^{\X^n}([0,s_h]\times(t_h,\infty])}{|\Lm^n|}\biggr)\biggr\}_{n\in\N}
\]
satisfies the LDP with speed $|\Lm^n|$ and a good convex rate function that has a unique zero point.
Furthermore, combining~\eqref{eq:total_mass} and Lemma~\ref{lem:no_cube-vol}, we have $\xi_q^{\X^n}(\Dl)\le\#\cK^d_q(\Lm^n)\le3^d|\Lm^n|$.
Therefore,~\eqref{eq:LDP_PBtuple-PD} with $\xi^n=\xi_q^{\X^n}/|\Lm^n|$ and $a_n=|\Lm^n|$ is satisfied for any $\lm>0$.
Consequently, Theorem~\ref{thm:LDP_PBtuple-PD} implies that the $\cM(\Dl)$-valued process $\{\xi_q^{\X^n}/|\Lm^n|\}_{n\in\N}$ satisfies the LDP with speed $|\Lm^n|$ and a good convex rate function $I_q$, defined in~\eqref{eq:LDP_PD}, that has a unique zero point.
Furthermore, the unique zero point of $I_q$ must be $\widehat\xi_q$ in Theorem~\ref{thm:LLN_PD}.
\end{proof}

\subsection{Proof of Theorem~\ref{thm:LDP_PBtuple-PD}}\label{ssec:pf_PBtuple-PD}
In this subsection, we will prove Theorem~\ref{thm:LDP_PBtuple-PD}.
We first introduce the notion of histogram of a given measure $\xi\in\cM(\Dl)$, which is useful for the proof of Theorem~\ref{thm:LDP_PBtuple-PD}.
For $l\in\N$, let $\cI_l$ be the set of all disjoint rectangular regions $I$ of the form either
\begin{align}
I&=\biggl[0,\frac1{2^{l+1}}\biggr]\times\biggl(\frac{j-1}{2^{l+1}},\frac j{2^{l+1}}\biggr]\text{ for $j\in\N$ with $3\le j\le l\cdot2^{l+1}$}\label{eq:left-closed_rect}
\shortintertext{or}
I&=\biggl(\frac{i-1}{2^{l+1}},\frac i{2^{l+1}}\biggr]\times\biggl(\frac{j-1}{2^{l+1}},\frac j{2^{l+1}}\biggr]\text{ for $(i,j)\in\N^2$ with $2\le i\le j\le l\cdot2^{l+1}$ and $j-i\ge2$.}\label{eq:left-open_rect}
\end{align}
Given $I\in\bigcup_{l=1}^\infty\cI_l$, we denote by $\text{LR}(I)$, $\text{UR}(I)$, $\text{UL}(I)$, and $\text{LL}(I)$ the lower-right, upper-right, upper-left, and lower-left corners of $I$, respectively.
Note that every $I\in\bigcup_{l=1}^\infty\cI_l$ is a relatively compact set in $\Dl$ since $\text{LR}(I)\in\Dl$.
Therefore, for any $\xi\in \cM(\Dl)$ and $I\in\cI_l$, it holds that $\xi(I)<\infty$.
Note also that
\begin{equation}\label{eq:cover_Dl}
\bigcup_{l=1}^\infty\bigcup_{I\in\cI_l}I=\Dl.
\end{equation}
Given $\xi\in\cM(\Dl)$ and $l\in\N$, we define the \textit{histogram of $\xi$ with fineness degree $l$} by
\[
\hist_l(\xi)\coloneqq(\xi(I))_{I\in\cI_l}\in\R^{\cI_l}.
\]
\begin{lem}\label{lem:LDP_PB-PH}
Let $\{\xi^n\}_{n\in\N}$ be an $\cM(\Dl)$-valued process, and let $l\in\N$ be fixed.
Assume that for any finite family $\cP=\{(s_i,t_i)\}_{i=1}^h$ with $0\le s_i<t_i<\infty$, the $\R^h$-valued process
\[
\{(\xi^n([0,s_1]\times(t_1,\infty]),\xi^n([0,s_2]\times(t_2,\infty]),\ldots,\xi^n([0,s_h]\times(t_h,\infty]))\}_{n\in\N}
\]
satisfies an LDP with speed $a_n$ and a good rate function $I_\cP\colon\R^h\to[0,\infty]$.
Then, the $\R^{\cI_l}$-valued process $\{\hist_l(\xi^n)\}_{n\in\N}$ satisfies an LDP with speed $a_n$ and a good rate function $I_l\colon\R^{\cI_l}\to[0,\infty]$.
Furthermore, the following statements hold.
\begin{enumerate}
\item If $I_\cP$ has a unique zero point for each finite family $\{(s_i,t_i)\}_{i=1}^h$ with $0\le s_i<t_i<\infty$, then so does $I_l$.
\item Suppose that $I_\cP$ is convex for each finite family $\{(s_i,t_i)\}_{i=1}^h$ with $0\le s_i<t_i<\infty$, and also that
\[
\sup_{n\in\N}a_n^{-1}\log\E\biggl[\exp\biggl(a_n\sum_{I\in\cI_l}\lm_I\xi^n(I)\biggr)\biggr]<\infty
\]
for any $\lm=(\lm_I)_{I\in\cI_l}\in\R^{\cI_l}$.
Then, for every $\lm=(\lm_I)_{I\in\cI_l}\in\R^{\cI_l}$, the limit
\[
\varphi_l(\lm)\coloneqq\lim_{n\to\infty}a_n^{-1}\log\E\biggl[\exp\biggl(a_n\sum_{I\in\cI_l}\lm_I\xi^n(I)\biggr)\biggr]
\]
exists in $\R$, and $I=\varphi_l^*$ holds.
\end{enumerate}
\end{lem}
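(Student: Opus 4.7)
The plan is to realize $\hist_l(\xi^n)$ as a fixed linear image of a tuple of quadrant masses of the form $\xi^n([0,s]\times(t,\infty])$, and then invoke the contraction principle to transport the given LDP.

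For each rectangle $I \in \cI_l$, the inclusion--exclusion principle expresses $\xi^n(I)$ as a signed sum of at most four quadrant masses $\xi^n([0,s]\times(t,\infty])$, where $s$ and $t$ run over the dyadic corner coordinates of $I$. Collecting all such corner points over the finite set $\cI_l$ produces a finite family $\cP = \{(s_i^*, t_i^*)\}_{i=1}^h$ with $0 \le s_i^* < t_i^* \le l < \infty$; the strict inequality $s_i^* < t_i^*$ is forced by the conditions $j \ge 3$ in~\eqref{eq:left-closed_rect} and $j - i \ge 2$ in~\eqref{eq:left-open_rect}. This yields a fixed linear map $T \colon \R^h \to \R^{\cI_l}$ satisfying $\hist_l(\xi^n) = T(Y^n)$, where $Y^n \coloneqq (\xi^n([0, s_i^*] \times (t_i^*, \infty]))_{i=1}^h$.

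By hypothesis, $\{Y^n\}_{n \in \N}$ satisfies an LDP with speed $a_n$ and good rate function $I_\cP$. Since $T$ is linear and hence continuous, the contraction principle produces an LDP for $\{\hist_l(\xi^n)\}_{n \in \N}$ with speed $a_n$ and good rate function
\begin{equation*}
I_l(y) \coloneqq \inf\{I_\cP(x) : x \in \R^h,\ T(x) = y\},
\end{equation*}
under the convention $\inf \emptyset = \infty$. For part~(1), if $x_0$ is the unique zero of $I_\cP$, then $I_l(y) = 0$ if and only if $y = T(x_0)$, so $T(x_0)$ is the unique zero of $I_l$.

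For part~(2), a direct computation using the linearity of $T$ shows that the infimum of a convex function over the affine slice $T^{-1}(y)$ is convex in $y$, so $I_l$ inherits convexity from $I_\cP$. Together with the assumed exponential moment bound, the general-state-space analogue of Theorem~\ref{thm:DZ_4.5.10} (as stated at full generality in~\cite{DZ10}) then gives existence of the limit $\varphi_l$ and the identity $I_l = \varphi_l^*$. The routine content is the inclusion--exclusion expansion and the linearity of $T$; the one conceptual point worth flagging is preservation of convexity under contraction by a linear map, which reduces part~(2) to the cited duality result.
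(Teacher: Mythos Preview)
Your proof is correct and follows essentially the same route as the paper: collect the dyadic corner points of the rectangles in $\cI_l$ into a finite family $\cP_l$, express $\hist_l(\xi^n)$ as a continuous linear image of the tuple of quadrant masses indexed by $\cP_l$ via inclusion--exclusion, and apply the contraction principle; parts~(1) and~(2) then follow from the standard facts that the contracted rate function inherits a unique zero (using goodness of $I_\cP$) and convexity under a linear map, after which Theorem~\ref{thm:DZ_4.5.10} identifies $I_l$ as $\varphi_l^*$. The only cosmetic difference is that the paper names the linear map $F_l$ and writes out its two cases explicitly, whereas you package everything into a single $T$; and in part~(2) the finite-dimensional Theorem~\ref{thm:DZ_4.5.10} already suffices, so no general-state-space version is needed.
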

\begin{proof}
Set $\cP_l=\bigcup_{I\in\cI_l}\{\text{LR}(I)\text{, }\text{UR}(I)\text{, }\text{UL}(I)\text{, }\text{LL}(I)\}$.
We define a linear map $F_l\colon\R^{\cP_l}\to\R^{\cI_l}$ by
\[
(F_l(\b))(I)
\coloneqq\begin{cases}
\b(\text{LR}(I))-\b(\text{\text{UR}(I)})    &\text{if $I$ is of the form~\eqref{eq:left-closed_rect},}\\
\b(\text{LR}(I))-\b(\text{\text{UR}(I)})+\b(\text{UL}(I))-\b(\text{LL}(I))    &\text{if $I$ is of the form~\eqref{eq:left-open_rect}}
\end{cases}
\]
for any $\b=(\b(p))_{p\in\cP_l}\in\R^{\cP_l}$ and $I\in\cI_l$.
Note that $F_l$ is continuous.
Now, for $p=(s,t)\in\R^2$ with $0\le s\le t<\infty$, we write $\b^n(p)=\xi^n([0,s]\times(t,\infty])$ for convenience.
Then, by the inclusion-exclusion principle, we have
\[
\hist_l(\xi^n)=F_l((\b^n(p))_{p\in\cP_l}).
\]
Since the $\R^{\cP_l}$-valued process $\{(\b^n(p))_{p\in\cP_l}\}_{n\in\N}$ satisfies the LDP with speed $a_n$ and a good rate function $I_{\cP_l}$ from the assumption, it follows from Theorem~\ref{thm:contraction} that the $\R^{\cI_l}$-valued process $\{\hist_l(\xi^n)\}_{n\in\N}$ also satisfies the LDP with speed $a_n$ and a good rate function $I_l\colon\R^{\cI_l}\to[0,\infty]$ defined by
\[
I_l(H)\coloneqq\inf_{\b\in F_l^{-1}(\{H\})}I_{\cP_l}(\b)
\]
for any $H\in\R^{\cI_l}$.
Statement~(1) follows immediately from Remark~\ref{rem:contraction}~(1).
Furthermore, combining Remark~\ref{rem:contraction}~(2) with Theorem~\ref{thm:DZ_4.5.10} (with speed $a_n$ instead of $|\Lm^n|$) yields Statement~(2).
\end{proof}

Next, we prove the following lemma using the technique of exponentially good approximation~(see Appendix~\ref{sec:trans_LDP}).
In what follows, for $\xi\in\cM(\Dl)$ and $f\in C_c(\Dl)$, we write $\xi f=\int_\Dl f\,d\xi\in\R$ for simplicity.
\begin{lem}\label{lem:fdim_distr}
Let $\{\xi^n\}_{n\in\N}$ be an $\cM(\Dl)$-valued process, and let $m\in\N$ and $f_1,f_2,\ldots,f_m\in C_c(\Dl)$ be fixed.
Assume that~\eqref{eq:LDP_PBtuple-PD} holds for any $\lm>0$.
Assume further that for each fixed $l\in\N$, the $\R^{\cI_l}$-valued process $\{\hist_l(\xi^n)\}_{n\in\N}$ satisfies an LDP with speed $a_n$ and a good rate function $I_l\colon\R^{\cI_l}\to[0,\infty]$.
Then, the $\R^m$-valued process $\{(\xi^nf_1,\xi^nf_2,\ldots,\xi^nf_m)\}_{n\in\N}$ satisfies an LDP with speed $a_n$ and a good rate function $I_{f_1,f_2,\ldots,f_m}\colon\R^m\to[0,\infty]$.
Furthermore, the following statements hold.
\begin{enumerate}
\item Suppose that there exists $K>0$ such that $\xi^n(\Dl)\le K$ for all $n\in\N$.
If $I_l$ has a unique zero point for each $l\in\N$, then so does $I_{f_1,f_2,\ldots,f_m}$.
\item If $I_l$ is convex for each $l\in\N$, then for every $\lm=(\lm_j)_{j=1}^m\in\R^m$, the limit
\[
\varphi_{f_1,f_2,\ldots,f_m}(\lm)\coloneqq\lim_{n\to\infty}a_n^{-1}\log\E\biggl[\exp\biggl(a_n\sum_{j=1}^m\lm_j(\xi^nf_j)\biggr)\biggr]
\]
exists in $\R$, and $I=\varphi_{f_1,f_2,\ldots,f_m}^*$ holds.
\end{enumerate}
\end{lem}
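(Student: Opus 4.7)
My approach is to approximate each $f_j$ by a step function $f_j^{(l)}$ that is constant on every rectangle of $\cI_l$ (and vanishes outside $\bigcup_{I\in\cI_l}I$), so that the vector $X^{(l),n}\coloneqq(\xi^nf_1^{(l)},\ldots,\xi^nf_m^{(l)})$ becomes a continuous linear image of the histogram $\hist_l(\xi^n)$, and then to pass to the limit $l\to\infty$ using the technique of exponentially good approximations from Appendix~\ref{sec:trans_LDP}. The compact set $K\coloneqq\bigcup_{j=1}^m\supp(f_j)\subset\Dl$ is bounded away from the diagonal and from $t=\infty$, so by~\eqref{eq:cover_Dl} it is covered by finitely many rectangles of $\cI_l$ for all sufficiently large $l$; picking a representative $x_I\in I$ and defining $f_j^{(l)}\coloneqq\sum_{I\in\cI_l}f_j(x_I)\mathbf{1}_I$, the uniform continuity of $f_j$ yields $\eps_l\coloneqq\max_j\|f_j-f_j^{(l)}\|_\infty\to0$. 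The continuous linear map $T_l\colon\R^{\cI_l}\to\R^m$ given by $(T_lH)_j\coloneqq\sum_{I\in\cI_l}f_j(x_I)H_I$ satisfies $X^{(l),n}=T_l(\hist_l(\xi^n))$, so the contraction principle yields an LDP for $\{X^{(l),n}\}_n$ with speed $a_n$ and good rate function $\widetilde I_l(x)\coloneqq\inf\{I_l(H)\colon T_lH=x\}$; the latter inherits uniqueness of its zero from $I_l$ (located at $T_lH_l^*$, where $H_l^*$ is the unique zero of $I_l$) and, by linearity of $T_l$, inherits convexity from $I_l$.

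The crucial step is to verify that $\{X^{(l),n}\}_n$ is an exponentially good approximation of $\{X^n\}\coloneqq\{(\xi^nf_1,\ldots,\xi^nf_m)\}_n$. The deterministic estimate
\[
\|X^n-X^{(l),n}\|_{\R^m}\le\sqrt m\,\max_j\int_\Dl|f_j-f_j^{(l)}|\,d\xi^n\le\sqrt m\,\eps_l\,\xi^n(\Dl),
\]
combined with the Chernoff bound and the moment hypothesis~\eqref{eq:LDP_PBtuple-PD}, gives for every $\lm>0$ and $\dl>0$
\[
\limsup_{n\to\infty}a_n^{-1}\log\P(\|X^n-X^{(l),n}\|_{\R^m}>\dl)\le C(\lm)-\frac{\lm\dl}{\sqrt m\,\eps_l},
\]
where $C(\lm)\coloneqq\sup_{n\in\N}a_n^{-1}\log\E[\exp(a_n\lm\xi^n(\Dl))]<\infty$. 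Since $\eps_l\to0$, the right-hand side tends to $-\infty$ as $l\to\infty$ with $\lm$ fixed, which is precisely the exponentially good approximation property. The transfer theorem from Appendix~\ref{sec:trans_LDP} then delivers an LDP for $\{X^n\}$ with speed $a_n$ and a good rate function $I_{f_1,\ldots,f_m}$.

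For statement~(1), the convergence $X^{(l),n}\to y_l^*\coloneqq T_lH_l^*$ in probability at each $l$, combined with the a priori bound $\|X^n-X^{(l),n}\|_{\R^m}\le\sqrt m\,\eps_lK$ available under $\xi^n(\Dl)\le K$, forces $\{y_l^*\}$ to be Cauchy and $X^n$ to converge in probability to its common limit $x^*$; any other zero of the good rate function $I_{f_1,\ldots,f_m}$ would contradict the LDP lower bound on a small ball around it. For statement~(2), I would proceed via cumulants: Varadhan's lemma applied to $\{X^{(l),n}\}$ with its good convex rate function $\widetilde I_l$ gives $\lim_n a_n^{-1}\log\E[\exp(a_nf(X^{(l),n}))]=\sup_x\{f(x)-\widetilde I_l(x)\}$ for Lipschitz $f$, and the H\"older splitting
\[
\E[\exp(a_nf(X^n))]\le\E[\exp(a_npf(X^{(l),n}))]^{1/p}\E[\exp(a_nq\|f\|_{\Lip}\sqrt m\,\eps_l\,\xi^n(\Dl))]^{1/q}
\]
(together with its reverse) transfers this limit to $\{X^n\}$ by sending $n\to\infty$, then $l\to\infty$, and finally $p\to1$, with the error term killed using the right-continuity at zero of $C(\cdot)$. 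Applied to linear functions $f(x)=\langle\lm,x\rangle_{\R^m}$, this yields existence and convexity of $\varphi_{f_1,\ldots,f_m}(\lm)=\lim_l\varphi_l(\lm)$; combined with the obvious bound $\E[\exp(a_n\langle\lm,X^n\rangle_{\R^m})]\le\E[\exp(a_n|\lm|\sqrt m\max_j\|f_j\|_\infty\xi^n(\Dl))]$, Theorem~\ref{thm:DZ_4.5.10} (in its $a_n$-speed form) then yields $I_{f_1,\ldots,f_m}=\varphi_{f_1,\ldots,f_m}^*$.

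The main obstacle I foresee is that convexity of $I_{f_1,\ldots,f_m}$ does not drop out directly from the $\sup_{\dl}\liminf_l\inf_{B(x,\dl)}$-type formula produced by the exponentially good approximation theorem, since $\liminf$ of convex functions need not be convex. The detour through cumulants in the H\"older argument above is needed both to harness the convexity of the $\widetilde I_l$ and to supply the existence of the limiting logarithmic moment generating function required by Theorem~\ref{thm:DZ_4.5.10}.
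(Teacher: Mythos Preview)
Your overall architecture---step-function approximation $f_j^{(l)}$ constant on the rectangles of $\cI_l$, contraction principle for $X^{(l),n}=T_l(\hist_l(\xi^n))$, then exponentially good approximation to pass to $X^n$---is exactly what the paper does. The paper additionally verifies the exponential tightness of $\{X^n\}$ (an explicit hypothesis of Theorem~\ref{thm:EGA}), which you use implicitly; this is immediate from $|\xi^nf_j|\le\|f_j\|_\infty\,\xi^n(\Dl)$ together with~\eqref{eq:LDP_PBtuple-PD}. Your treatment of Statement~(1) is also essentially the paper's, phrased slightly differently: under the a~priori bound $\xi^n(\Dl)\le K$ the approximation becomes \emph{uniform} in $n$ for each large $l$, which is precisely the stronger condition in Remark~\ref{rem:EGA}(1).

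Where you diverge is Statement~(2). Your concern is based on a misreading of the rate-function formula: Theorem~\ref{thm:EGA} produces
\[
I(y)=\sup_{\dl>0}\,\limsup_{l\to\infty}\,\inf_{y'\in\bar B(y,\dl)}\widetilde I_l(y'),
\]
with $\limsup$, not $\liminf$. Now, the infimal convolution $y\mapsto\inf_{y'\in\bar B(y,\dl)}\widetilde I_l(y')$ is convex whenever $\widetilde I_l$ is, and a pointwise $\limsup$ of convex functions is again convex (write $\limsup_l=\inf_N\sup_{l\ge N}$; the inner supremum is convex, and a decreasing pointwise limit of convex functions is convex by passing to the limit in the defining inequality). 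Hence $I_{f_1,\ldots,f_m}$ inherits convexity directly from the $\widetilde I_l$, exactly as recorded in Remark~\ref{rem:EGA}(2), and the paper then simply applies Theorem~\ref{thm:DZ_4.5.10}. Your H\"older/Varadhan detour is therefore unnecessary; moreover, as you wrote it, the detour still terminates in an appeal to Theorem~\ref{thm:DZ_4.5.10}, which itself requires convexity of $I_{f_1,\ldots,f_m}$ as a hypothesis---so it does not actually circumvent the issue you raised. Once you correct $\liminf$ to $\limsup$, the obstacle disappears and the argument collapses to the paper's.
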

\begin{proof}
We first note that~\eqref{eq:LDP_PBtuple-PD} implies that $\{\xi^n(\Dl)\}_{n\in\N}$ is exponentially tight with speed $a_n$.
Indeed, for any $K\ge0$, the Markov inequality after multiplying $a_n$ and exponentiating yields
\[
a_n^{-1}\log\P(\xi^n(\Dl)>K)
\le a_n^{-1}\log\frac{\E[\exp(a_n\xi^n(\Dl))]}{\exp(a_nK)}
=a_n^{-1}\log\E[\exp(a_n\xi^n(\Dl))]-K,
\]
which immediately implies the exponential tightness of $\{\xi^n(\Dl)\}_{n\in\N}$ with speed $a_n$.

Now, for $i\in\{1,2,\ldots,m\}$ and $l\in\N$, define a piecewise constant function $f_i^{(l)}\colon\Dl\to\R$ by
\[
f_i^{(l)}\coloneqq\sum_{I\in\cI_l}f_i(\text{UR}(I))\1_I.
\]
For each $n\in\N$ and $l\in\N$, we set
\[
Z^n=(\xi^nf_1,\xi^nf_2,\ldots,\xi^nf_m)
\quad\text{and}\quad
Z^{n,l}=(\xi^nf_1^{(l)},\xi^nf_2^{(l)},\ldots,\xi^nf_m^{(l)}).
\]
In order to apply Theorem~\ref{thm:EGA}, we first show that $\{Z^{n,l}\}_{n,l\in\N}$ is an exponentially good approximation of $\{Z^n\}_{n\in\N}$ with speed $a_n$.
Let $\dl>0$ and $\a>0$.
By the exponential tightness of $\{\xi^n(\Dl)\}_{n\in\N}$ with speed $a_n$, there exists a constant $K>0$ such that
\begin{equation}\label{eq:fdim_distr_1}
\limsup_{n\to\infty}a_n^{-1}\log\P(\xi^n(\Dl)>K)\le-\a.
\end{equation}
Since $f_1,f_2,\ldots,f_m\in C_c(\Dl)$ are uniformly continuous, we can take $\rho>0$ such that
\[
|f_i(x)-f_i(y)|\le\frac\dl{\sqrt mK}
\]
for any $i\in\{1,2,\ldots,m\}$ and $x,y\in\Dl$ with $d_\Dl(x,y)<\rho$.
Here, $d_\Dl$ is a metric that induces the topology on $\Dl$.
Noting that~\eqref{eq:cover_Dl} and the compactness of $\bigcup_{i=1}^m\supp(f_i)$, choose $L\in\N$ so that
\[
\bigcup_{i=1}^m\supp(f_i)\subset\bigcup_{I\in\cI_L}I
\quad\text{and}\quad
\max_{\substack{I\in\cI_L\\I\cap\bigcup_{i=1}^m\supp(f_i)\neq\emptyset}}\diam_\Dl(I)
<\rho.
\]
Then, for $l\ge L$ and $i\in\{1,2,\ldots,m\}$,
\begin{align*}
\sup_{x\in\Dl}|f_i(x)-f_i^{(l)}(x)|
&=\sup_{x\in\bigcup_{I\in\cI_L}I}|f_i(x)-f_i^{(l)}(x)|\\
&=\max_{\substack{I\in\cI_L\\I\cap\bigcup_{i=1}^m\supp(f_i)\neq\emptyset}}\max_{\substack{J\in\cI_l\\J\subset I}}\sup_{x\in J}|f_i(x)-f_i^{(l)}(x)|\\
&=\max_{\substack{I\in\cI_L\\I\cap\bigcup_{i=1}^m\supp(f_i)\neq\emptyset}}\max_{\substack{J\in\cI_l\\J\subset I}}\sup_{x\in J}|f_i(x)-f_i(\text{UR}(J))|\\
&\le\frac\dl{\sqrt mK}.
\end{align*}
Therefore, for $l\ge L$,
\[
\|Z^n-Z^{n,l}\|_{\R^m}
\le\Biggl(\sum_{i=1}^m\biggl(\int_\Dl|f_i-f_i^{(l)}|\,d\xi^n\biggr)^2\Biggr)^{1/2}
\le\frac{\xi^n(\Dl)}K\dl,
\]
which implies that
\[
\limsup_{n\to\infty}a_n^{-1}\log\P(\|Z^n-Z^{n,l}\|_{\R^m}>\dl)
\le\limsup_{n\to\infty}a_n^{-1}\log\P(\xi^n(\Dl)>K)
\le-\a.
\]
Thus, we obtain
\[
\limsup_{l\to\infty}\limsup_{n\to\infty}a_n^{-1}\log\P(\|Z^n-Z^{n,l}\|_{\R^m}>\dl)\le-\a.
\]
Since $\a>0$ is arbitrary,
\begin{equation}\label{eq:fdim_distr_2}
\limsup_{l\to\infty}\limsup_{n\to\infty}a_n^{-1}\log\P(\|Z^n-Z^{n,l}\|_{\R^m}>\dl)=-\infty,
\end{equation}
which means that $\{Z^{n,l}\}_{n,l\in\N}$ is an exponentially good approximation of $\{Z^n\}_{n\in\N}$ with speed $a_n$.

Next, we fix $l\in\N$, and prove that the $\R^m$-valued process $\{Z^{n,l}\}_{n\in\N}$ satisfies an LDP with speed $a_n$ and a rate function.
For each $i\in\{1,2,\ldots,m\}$, we define a linear map $G_i^{(l)}\colon\R^{\cI_l}\to\R$ by
\[
G_i^{(l)}(H)\coloneqq\sum_{I\in\cI_l}f_i(\text{UR}(I))H(I)
\]
for any $H=(H(I))_{I\in\cI_l}\in\R^{\cI_l}$, and also define a linear map $G^{(l)}\colon\R^{\cI_l}\to\R^m$ by
\[
G^{(l)}\coloneqq(G_1^{(l)},G_2^{(l)},\ldots,G_m^{(l)}).
\]
Since
\[
G_i^{(l)}(\hist_l(\xi^n))
=\sum_{I\in\cI_l}f_i(\text{UR}(I))\xi^n(I)
=\xi^nf_i^{(l)}
\]
for all $i\in\{1,2,\ldots,m\}$, we have $G^{(l)}(\hist_l(\xi^n))=Z^{n,l}$.
Therefore, it follows from the assumption and Theorem~\ref{thm:contraction} that $\{Z^{n,l}\}_{n\in\N}$ satisfies an LDP with speed $a_n$ and a good rate function.

Furthermore, since $|\xi^nf_i|\le(\sup_{x\in\Dl}|f_i(x)|)\xi^n(\Dl)$, the exponential tightness of $\{Z^n\}_{n\in\N}$ with speed $a_n$ follows immediately from that of $\{\xi^n(\Dl)\}_{n\in\N}$.
Thus, by Theorem~\ref{thm:EGA}, the $\R^m$-valued process $\{Z^n\}_{n\in\N}$ satisfies an LDP with speed $a_n$ and a good rate function.

For Statement~(1), suppose that there exists $K>0$ such that $\xi^n(\Dl)\le K$ for all $n\in\N$.
Then, we can replace $\a$ in~\eqref{eq:fdim_distr_1} to $\infty$.
Hence, instead of~\eqref{eq:fdim_distr_2}, we obtain
\[
\limsup_{n\to\infty}a_n^{-1}\log\P(\|Z^n-Z^{n,l}\|_{\R^m}>\dl)=-\infty
\]
for any $l\ge L$.
Therefore, the conclusion follows from Remark~\ref{rem:EGA}~(1).

Lastly, we prove Statement~(2).
By the Cauchy--Schwarz inequality,
\begin{align*}
\sup_{n\in\N}a_n^{-1}\log\E\biggl[\exp\biggl(a_n\sum_{j=1}^m\lm_j(\xi^nf_j)\biggr)\biggr]
&\le\sup_{n\in\N}a_n^{-1}\log\E\biggl[\exp\biggl(a_n\|\lm\|_{\R^m}\biggl(\sum_{j=1}^m(\xi^nf_j)^2\biggr)^{1/2}\biggr)\biggr]\\
&\le\sup_{n\in\N}a_n^{-1}\log\E\biggl[\exp\biggl(a_n\|\lm\|_{\R^m}\biggl(\sum_{j=1}^m\sup_{x\in\Dl}f_j(x)^2\biggr)^{1/2}\xi^n(\Dl)\biggr)\biggr]
\end{align*}
for $\lm=(\lm_j)_{j=1}^m\in\R^m$.
The right-hand side of the above inequality is finite by~\eqref{eq:LDP_PBtuple-PD}.
Thus, combining Remark~\ref{rem:EGA}~(2) with Theorem~\ref{thm:DZ_4.5.10} (with speed $a_n$ instead of $|\Lm^n|$) yields Statement~(2).
\end{proof}

Finally, we prove Theorem~\ref{thm:LDP_PBtuple-PD} using Theorem~\ref{thm:LDP_rdm_meas}, which is a general statement to ensure an LDP for random measures (see Appendix~\ref{sec:LDP_rdm_meas}).
\begin{proof}[Proof of Theorem~\ref{thm:LDP_PBtuple-PD}]
Combining Lemmas~\ref{lem:LDP_PB-PH} and~\ref{lem:fdim_distr}, we conclude that for any $m\in\N$ and $f_1,f_2,\ldots,f_m\in C_c(\Dl)$, the $\R^m$-valued process $\{(\xi^nf_1,\xi^nf_2,\ldots,\xi^nf_m)\}_{n\in\N}$ satisfies an LDP with speed $a_n$ and a good rate function.
Therefore, Theorem~\ref{thm:LDP_rdm_meas} implies that the $\cM(\Dl)$-valued process $\{\xi^n\}_{n\in\N}$ satisfies an LDP with speed $a_n$ and a good rate function.

For Statement~(1), suppose that there exists $K>0$ such that $\xi^n(\Dl)\le K$ for all $n\in\N$.
Then, the conclusion is an immediate consequence of combining Statements~(1) of Lemmas~\ref{lem:LDP_PB-PH} and~\ref{lem:fdim_distr} and Theorem~\ref{thm:LDP_rdm_meas}.

Lastly, we prove Statement~(2).
We first note that from~\eqref{eq:LDP_PBtuple-PD},
\begin{align*}
\sup_{n\in\N}a_n^{-1}\log\E\biggl[\exp\biggl(a_n\sum_{I\in\cI_l}\lm_I\xi^n(I)\biggr)\biggr]
&\le\sup_{n\in\N}a_n^{-1}\log\E\Bigl[\exp\Bigl(a_n\max_{I\in\cI_l}|\lm_I|\cdot\xi^n(\Dl)\Bigr)\Bigr]<\infty
\end{align*}
for any $\lm=(\lm_I)_{I\in\cI_l}\in\R^{\cI_l}$, and 
\[
\sup_{n\in\N}a_n^{-1}\log\E[\exp(a_n\xi^nf)]
\le\sup_{n\in\N}a_n^{-1}\log\E\Bigl[\exp\Bigl(a_n\sup_{x\in\Dl}|f(x)|\cdot\xi^n(\Dl)\Bigr)\Bigr]
<\infty
\]
for any $f\in C_c(\Dl)$.
Therefore, Statement~(2) follows immediately by combining Statements~(2) of Lemmas~\ref{lem:LDP_PB-PH} and~\ref{lem:fdim_distr} and Theorem~\ref{thm:LDP_rdm_meas}.
\end{proof}

\appendix

\section{Strong law of large numbers for strongly regular nearly additive processes}\label{sec:LLN_rnap}
In this section, we prove the strong law of large numbers under a weaker assumption than that in Section~\ref{sec:LDP_ernap}.
In what follows, let $h\in\N$ be fixed.
\begin{df}
Let $r\ge0$ be an integer.
We say that an $\R^h$-valued process $\{S^n\}_{n\in\N}$ is \textit{strongly $r$-nearly additive} if there exist $\R^h$-valued random variables $\{S^{n,z}\}_{n\in\N,z\in\Z^d}$ such that the following conditions are satisfied:
\begin{itemize}
\item $\{S^{n,z}\}_{z\in\Z^d}$ are independent copies of $S^n$ for every $n\in\N$;
\item it holds that
\begin{equation}\label{eq:sna}
\sup_{m\in\N}|\Lm^{(2m+1)k}|^{-1}\biggl\|S^{(2m+1)k}-\sum_{z\in\Z^d\cap[-m,m]^d}S^{k-r,z}\biggr\|_{\R^h}
\xrightarrow[k\to\infty]{\text{a.s.}}0.
\end{equation}
\end{itemize}
We also say that an $\R^h$-valued process $\{S^n\}_{n\in\N}$ is \textit{strongly nearly additive} if there exists an integer $r\ge0$ such that $\{S^n\}_{n\in\N}$ is strongly $r$-nearly additive.
\end{df}
\begin{df}
We say that an $\R^h$-valued process $\{S^n\}_{n\in\N}$ is \textit{strongly regular} if the following property holds for each fixed $k\in\N$:
if $m_n$ is taken as the unique integer satisfying that $(2m_n+1)k\le n<(2m_n+3)k$ for each $n\in\N$, then
\begin{equation}\label{eq:sr}
|\Lm^n|^{-1}\|S^n-S^{(2m_n+1)k}\|_{\R^h}
\xrightarrow[n\to\infty]{\text{a.s.}}0.
\end{equation}
\end{df}
\begin{rem}\label{rem:Exp-STG}
By a standard Borel--Cantelli argument, we can easily verify that the exponential regularity and exponentially near additivity implies the strong regularity and strongly near additivity, respectively.
\end{rem}
\begin{thm}
Let $\{S^n\}_{n\in\N}$ be a strongly regular nearly additive $\R^h$-valued process consisting of integrable random variables.
Suppose that $\sup_{n\in\N}\|\E[S^n]\|_{\R^h}/|\Lm^n|<\infty$.
Then, the limit
\begin{equation}\label{eq:sLLN_srnap}
\widehat S\coloneqq\lim_{n\to\infty}\frac{\E[S^n]}{|\Lm^n|}
\end{equation}
exists in $\R^h$, and
\[
\frac{S^n}{|\Lm^n|}\to\widehat S\quad\text{almost surely as $n\to\infty$.}
\]
\end{thm}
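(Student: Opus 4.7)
The plan is to parallel the proof of Theorem~\ref{thm:LDP_ernap} in Section~\ref{sec:LDP_ernap}, replacing the exponential moment estimates by purely almost-sure arguments, and to extract the convergence of the deterministic means $a_n:=\E[S^n]/|\Lm^n|\in\R^h$ as a byproduct rather than as a separate input. Let $\{S^{n,z}\}_{n\in\N,z\in\Z^d}$ be the random variables from the definition of strong $r$-near additivity. For each fixed $k>r$, the vectors $\{S^{k-r,z}/|\Lm^{k-r}|\}_{z\in\Z^d}$ are i.i.d.\ with common mean $a_{k-r}$, so the classical multi-dimensional strong law of large numbers gives
\[
\frac{1}{(2m+1)^d|\Lm^{k-r}|}\sum_{z\in\Z^d\cap[-m,m]^d}S^{k-r,z}\xrightarrow[m\to\infty]{\text{a.s.}}a_{k-r}.
\]
Multiplying by the deterministic ratio $(2m+1)^d|\Lm^{k-r}|/|\Lm^{(2m+1)k}|=((k-r)/k)^d$ yields the analogous statement with the normalization $|\Lm^{(2m+1)k}|^{-1}$ and limit $((k-r)/k)^d a_{k-r}$.

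Next, I would intersect this almost-sure event (over the countable family of $k$'s) with the almost-sure event on which~\eqref{eq:sna} holds and with the almost-sure events from strong regularity~\eqref{eq:sr} (one per $k$), obtaining a single full-measure event $\Omega_0$. Denote by $\rho_k(\omega)$ the supremum appearing in~\eqref{eq:sna}, so that $\rho_k(\omega)\to 0$ as $k\to\infty$ on $\Omega_0$. A direct estimate combining the previous step with the triangle inequality then gives, for every $k>r$ and every $\omega\in\Omega_0$,
\[
\limsup_{m\to\infty}\Bigl\|\frac{S^{(2m+1)k}}{|\Lm^{(2m+1)k}|}-\Bigl(\frac{k-r}{k}\Bigr)^{\!d}a_{k-r}\Bigr\|_{\R^h}\le\rho_k(\omega),
\]
and since $|\Lm^{(2m_n+1)k}|/|\Lm^n|\to1$ as $n\to\infty$ together with boundedness of the $\limsup_m$ quantities, strong regularity upgrades this to
\[
\limsup_{n\to\infty}\Bigl\|\frac{S^n}{|\Lm^n|}-\Bigl(\frac{k-r}{k}\Bigr)^{\!d}a_{k-r}\Bigr\|_{\R^h}\le\rho_k(\omega).
\]

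Letting $k\to\infty$, the right-hand side tends to $0$ while $((k-r)/k)^d\to1$, and the hypothesis $\sup_n\|a_n\|_{\R^h}<\infty$ then forces $\{a_k\}$ to be Cauchy in $\R^h$; its limit $\widehat S$ is necessarily deterministic, and on $\Omega_0$ one simultaneously obtains $S^n/|\Lm^n|\to\widehat S$. The only substantive obstacle is the bookkeeping of null sets: because the families $\{S^{n,z}\}$ are required to be i.i.d.\ copies of $S^n$ only separately for each $n$, one must merge countably many ``SLLN-per-scale'' null sets with those coming from near additivity and regularity into one full-measure event before combining the three ingredients; once that is arranged, both the deterministic convergence~\eqref{eq:sLLN_srnap} of the means and the almost-sure convergence of $\{S^n/|\Lm^n|\}$ drop out of the same inequality.
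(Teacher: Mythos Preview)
Your proof is correct and follows essentially the same approach as the paper: both split $S^n/|\Lm^n|$ into a strong-regularity error, a near-additivity error $\rho_k$, an i.i.d.\ block-average term handled by the classical SLLN, and a deterministic remainder involving $a_{k-r}$, then let $n\to\infty$ followed by $k\to\infty$. The only tactical difference is in how the convergence of the means is obtained: the paper first picks an accumulation point $\widehat S$ of the bounded sequence $\{a_n\}$, proves $S^n/|\Lm^n|\to\widehat S$ a.s.\ along a subsequence of $k$'s with $a_{k-r}\to\widehat S$, and then concludes uniqueness of the accumulation point; you instead compare the estimates for two different $k$'s to show directly that $\{((k-r)/k)^d a_{k-r}\}$ (hence $\{a_k\}$, using boundedness) is Cauchy, which is marginally cleaner but amounts to the same thing.
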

\begin{proof}
Since $\sup_{n\in\N}\|\E[S^n]\|_{\R^h}/|\Lm^n|<\infty$ from the assumption, there exists an accumulation point $\widehat S\in\R^h$ of the sequence $\{\E[S^n]/|\Lm^n|\}_{n\in\N}$.
From the strongly near additivity of $\{S^n\}_{n\in\N}$, we can take an integer $r\ge0$ and $\R^h$-valued random variables $\{S^{n,z}\}_{n\in\N,z\in\Z^d}$ such that $\{S^{n,z}\}_{z\in\Z^d}$ are independent copies of $S^n$ for every $n\in\N$ and
\begin{equation}\label{eq:sLLN_srnap_1}
\sup_{m\in\N}\frac1{|\Lm^{(2m+1)k}|}\biggl\|S^{(2m+1)k}-\sum_{z\in\Z^d\cap[-m,m]^d}S^{k-r,z}\biggr\|_{\R^h}
\xrightarrow[k\to\infty]{\text{a.s.}}0.
\end{equation}
Now, let $k\in\N$ be fixed, and let $m_n$ be the unique integer satisfying that $(2m_n+1)k\le n<(2m_n+3)k$ for each $n\in\N$.
Then, by the triangle inequality,
\begin{align}\label{eq:sLLN_srnap_2}
\biggl\|\frac{S^n}{|\Lm^n|}-\widehat S\biggr\|_{\R^h}
&\le\frac1{|\Lm^n|}\|S^n-S^{(2m_n+1)k}\|_{\R^h}\nonumber\\
&\qad+\frac{|\Lm^{(2m_n+1)k}|}{|\Lm^n|}\cdot\frac1{|\Lm^{(2m_n+1)k}|}\biggl\|S^{(2m_n+1)k}-\sum_{z\in\Z^d\cap[-m_n,m_n]^d}S^{k-r,z}\biggr\|_{\R^h}\nonumber\\
&\qad+\frac{(2m_n+1)^d|\Lm^{k-r}|}{|\Lm^n|}\biggl\|\frac1{(2m_n+1)^d}\sum_{z\in\Z^d\cap[-m_n,m_n]^d}\frac{S^{k-r,z}}{|\Lm^{k-r}|}-\frac{\E[S^{k-r}]}{|\Lm^{k-r}|}\biggr\|_{\R^h}\nonumber\\
&\qad+\frac{(2m_n+1)^d|\Lm^{k-r}|}{|\Lm^n|}\biggl\|\frac{\E[S^{k-r}]}{|\Lm^{k-r}|}-\widehat S\biggr\|_{\R^h}
+\biggl(1-\frac{(2m_n+1)^d|\Lm^{k-r}|}{|\Lm^n|}\biggr)\|\widehat S\|_{\R^h}\nonumber\\
&\le\frac1{|\Lm^n|}\|S^n-S^{(2m_n+1)k}\|_{\R^h}\nonumber\\
&\qad+\sup_{m\in\N}\frac1{|\Lm^{(2m+1)k}|}\biggl\|S^{(2m+1)k}-\sum_{z\in\Z^d\cap[-m,m]^d}S^{k-r,z}\biggr\|_{\R^h}\nonumber\\
&\qad+\biggl\|\frac1{(2m_n+1)^d}\sum_{z\in\Z^d\cap[-m_n,m_n]^d}\frac{S^{k-r,z}}{|\Lm^{k-r}|}-\frac{\E[S^{k-r}]}{|\Lm^{k-r}|}\biggr\|_{\R^h}\nonumber\\
&\qad+\biggl\|\frac{\E[S^{k-r}]}{|\Lm^{k-r}|}-\widehat S\biggr\|_{\R^h}
+\biggl(1-\frac{(2m_n+1)^d|\Lm^{k-r}|}{|\Lm^n|}\biggr)\|\widehat S\|_{\R^h}
\end{align}
for any $n\in\N$.
For the second inequality, we also used $(2m_n+1)^d|\Lm^{k-r}|\le|\Lm^{(2m_n+1)k}|\le|\Lm^n|$.
The first and third terms in the right-hand side of~\eqref{eq:sLLN_srnap_2} converges to zero almost surely as $n\to\infty$ because of the strongly regularity of $\{S^n\}_{n\in\N}$ and the strong law of large numbers, respectively.
Noting also that
\[
\lim_{n\to\infty}\frac{(2m_n+1)^d|\Lm^{k-r}|}{|\Lm^n|}=\biggl(1-\frac rk\biggr)^d,
\]
we take $n\to\infty$ of both sides of~\eqref{eq:sLLN_srnap_2} to obtain
\begin{align*}
\limsup_{n\to\infty}\biggl\|\frac{S^n}{|\Lm^n|}-\widehat S\biggr\|_{\R^h}
&\le\sup_{m\in\N}\frac1{|\Lm^{(2m+1)k}|}\biggl\|S^{(2m+1)k}-\sum_{z\in\Z^d\cap[-m,m]^d}S^{k-r,z}\biggr\|_{\R^h}\\
&\qad+\biggl\|\frac{\E[S^{k-r}]}{|\Lm^{k-r}|}-\widehat S\biggr\|_{\R^h}
+\biggl(1-\biggl(1-\frac rk\biggr)^d\biggr)\|\widehat S\|_{\R^h}.
\end{align*}
By letting $k\to\infty$ in the above inequality (choose a suitable subsequence in $k$ if necessary), we conclude from~\eqref{eq:sLLN_srnap_1} that $S^n/|\Lm^n|$ converges to $\widehat S$ almost surely as $n\to\infty$.
In particular, $\widehat S$ is a unique accumulation point of the sequence $\{\E[S^n]/|\Lm^n|\}_{n\in\N}$.
Finally, the uniqueness of the accumulation point $\widehat S$ together with $\sup_{n\in\N}\|\E[S^n]\|_{\R^h}/|\Lm^n|<\infty$ implies~\eqref{eq:sLLN_srnap}.
\end{proof}

\begin{rem}
The \textit{weakly near additivity} and \textit{weak regularity} of an $\R^h$-valued process $\{S^n\}_{n\in\N}$ are defined by replacing the almost sure convergence in~\eqref{eq:sna} and~\eqref{eq:sr} to the convergence in probability.
For a weakly regular nearly additive $\R^h$-valued process $\{S^n\}_{n\in\N}$ consisting of integrable random variables with $\sup_{n\in\N}\|\E[S^n]\|_{\R^h}/|\Lm^n|<\infty$, the weak law of large numbers holds: the limit
\[
\widehat S\coloneqq\lim_{n\to\infty}\frac{\E[S^n]}{|\Lm^n|}
\]
exists in $\R^h$, and
\[
\frac{S^n}{|\Lm^n|}\to\widehat S\quad\text{in probability as $n\to\infty$.}
\]
Since the proof is almost the same, we omit the proof.
\end{rem}

\section{Transformations of large deviation principles}\label{sec:trans_LDP}
We here review basic methods to move around LDPs between different spaces.
In the following, let $\{a_n\}_{n\in\N}$ be a sequence of positive numbers tending to infinity as $n\to\infty$.

We first state the \textit{contraction principle}, which states that an LDP is preserved under continuous maps.
\begin{thm}[{\cite[Theorem~4.2.1]{DZ10}}]\label{thm:contraction}
Let $\cX$ and $\cY$ be Hausdorff topological spaces, and let $F\colon\cX\to\cY$ be a continuous function.
Assume that an $\cX$-valued process $\{Z^n\}_{n\in\N}$ satisfies an LDP with speed $a_n$ and a good rate function $I\colon\cX\to[0,\infty]$.
Then, the $\cY$-valued process $\{F(Z^n)\}_{n\in\N}$ satisfies the LDP with speed $a_n$ and a good rate function $I'\colon\cY\to[0,\infty]$ defined by
\begin{equation}\label{eq:contraction}
I'(y)\coloneqq\inf_{x\in F^{-1}(\{y\})}I(x)
\end{equation}
for any $y\in\cY$.
Here, the infimum over the empty set is regarded as $\infty$ by convention.
\end{thm}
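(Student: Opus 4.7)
The plan is to verify, in turn, the three requirements for $I'$ to be a good rate function satisfying both LDP bounds: compact sublevel sets (which in a Hausdorff space also yields lower semicontinuity), the upper bound, and the lower bound.

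First I would establish the identity
\[
\{y \in \cY : I'(y) \le \a\} = F(\{x \in \cX : I(x) \le \a\})
\quad\text{for every } \a \in [0,\infty).
\]
The inclusion $\supset$ is immediate from the definition~\eqref{eq:contraction}. For $\subset$, given $y$ with $I'(y) \le \a$, I would choose a sequence $x_n \in F^{-1}(\{y\})$ with $I(x_n) \to I'(y)$; for large $n$ this sequence lies in the compact sublevel set $\{I \le \a+1\}$, so a subsequence $x_{n_k}$ converges to some $x^* \in \cX$. Continuity of $F$ gives $F(x_{n_k}) \to F(x^*)$, and since $F(x_{n_k}) = y$ for all $k$, the Hausdorff property of $\cY$ forces $F(x^*) = y$. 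Lower semicontinuity of $I$ then yields $I(x^*) \le \liminf_k I(x_{n_k}) = I'(y) \le \a$, so $x^* \in F^{-1}(\{y\})$ witnesses $y \in F(\{I \le \a\})$. Consequently each sublevel set of $I'$ is the continuous image of a compact set, hence compact and (by the Hausdorff property of $\cY$) closed; this simultaneously gives the compact sublevel set property and the lower semicontinuity of $I'$.

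Second, I would derive the two LDP bounds by pulling back sets along $F$. For any Borel set $A \subset \cY$ one has $\{F(Z^n) \in A\} = \{Z^n \in F^{-1}(A)\}$, and continuity of $F$ ensures that $F^{-1}(A)$ is closed (respectively open) whenever $A$ is. Applying the upper and lower bounds of the LDP for $\{Z^n\}$ and using
\[
\inf_{x \in F^{-1}(A)} I(x)
= \inf_{y \in A}\,\inf_{x \in F^{-1}(\{y\})} I(x)
= \inf_{y \in A} I'(y)
\]
immediately produces the corresponding bounds for $\{F(Z^n)\}$ with rate function $I'$.

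I expect the main obstacle to be the identification of the sublevel sets of $I'$ with $F(\{I \le \a\})$: this is the only step that simultaneously requires goodness of $I$ (to extract a convergent subsequence from the minimizing sequence), lower semicontinuity of $I$ (to pass to the limit on the infimum), and the Hausdorff property of $\cY$ (to conclude $F(x^*)=y$). The two LDP bounds themselves reduce to a routine manipulation of preimages once this identification, and the reindexing of the infimum over fibers above, are in place.
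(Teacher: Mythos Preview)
The paper does not give its own proof of this statement; it is quoted verbatim from Dembo--Zeitouni as a standard tool, so there is nothing in the paper to compare your argument against. Your outline is the standard proof of the contraction principle and is correct in spirit, but there is one genuine technical gap at the stated level of generality.

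You argue that a minimizing sequence $x_n\in F^{-1}(\{y\})$ with $I(x_n)\to I'(y)$ eventually lies in the compact set $\{I\le\a+1\}$ and therefore has a convergent subsequence. In an arbitrary Hausdorff topological space, compactness does not imply sequential compactness, so this step is not justified. The fix is easy and requires no sequences: for each $\eps>0$ the set $F^{-1}(\{y\})\cap\{I\le I'(y)+\eps\}$ is nonempty (by definition of the infimum), closed (since $F^{-1}(\{y\})$ is closed by continuity of $F$ and Hausdorffness of $\cY$, and $\{I\le I'(y)+\eps\}$ is closed by lower semicontinuity of $I$), and contained in the compact set $\{I\le\a+1\}$, hence compact. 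These sets are nested as $\eps\downarrow0$, so their intersection is nonempty; any $x^*$ in the intersection satisfies $F(x^*)=y$ and $I(x^*)\le I'(y)\le\a$, which gives $y\in F(\{I\le\a\})$ and in fact shows the infimum defining $I'(y)$ is attained. The remainder of your argument (the LDP bounds via pullback and the reindexing of the infimum over fibers) is fine as written.
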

\begin{rem}\label{rem:contraction}
We remark on the uniqueness of the zero point and the convexity of the rate function $I'$ in Theorem~\ref{thm:contraction}.
\begin{enumerate}
\item (Uniqueness of the zero point)~If the good rate function $I$ has a unique zero point, then so does $I'$.
Indeed, let $\widehat x\in\cX$ be the unique zero point of $I$.
Then, $I'(F(\widehat x))=0$ from the definition of $I'$.
Furthermore, if $I'(y)=0$, then
there exists $x\in F^{-1}(\{y\})$ such that $I(x)=0$ by the goodness of the rate function $I$ together with the continuity of $F$.
Thus, $x=\widehat x$ from the uniqueness of the zero point of $I$, hence necessarily $y=F(\widehat x)$.
\item (Convexity)~Suppose that $\cX=\R^h$ and $\cY=\R^{h'}$ for some $h,h'\in\N$ and that $f$ is a linear map.
We can easily verify from~\eqref{eq:contraction} that if the rate function $I$ is convex, then so is $I'$.
\end{enumerate}
\end{rem}

Next, we review the notion of exponentially good approximation, and state a technical result, which deduces a new LDP from LDPs for approximation sequences.
\begin{df}
Let $(\cY,d_\cY)$ be a metric space, and let $\{Z^n\}_{n\in\N}$ and $\{Z^{n,l}\}_{n,l\in\N}$ be $\cY$-valued random variables.
$\{Z^{n,l}\}_{n,l\in\N}$ is called an \textit{exponentially good approximation} of $\{Z^n\}_{n\in\N}$ with speed $a_n$ if for any $\dl>0$,
\[
\lim_{l\to\infty}\limsup_{n\to\infty}a_n^{-1}\log\P(d_\cY(Z^n,Z^{n,l})>\dl)=-\infty.
\]
\end{df}
Combining~\cite[Lemma~1.2.18]{DZ10} and~\cite[Theorem~4.2.16]{DZ10}, we immediately have the following.
\begin{thm}\label{thm:EGA}
Let $(\cY,d_\cY)$ be a metric space, and let $\{Z^n\}_{n\in\N}$ and $\{Z^{n,l}\}_{n,l\in\N}$ be $\cY$-valued random variables.
Assume that the following three conditions are satisfied.
\begin{itemize}
\item $\{Z^{n,l}\}_{n,l\in\N}$ is an exponentially good approximation of $\{Z^n\}_{n\in\N}$ with speed $a_n$.
\item For each fixed $l\in\N$, the $\cY$-valued process $\{Z^{n,l}\}_{n\in\N}$ satisfies an LDP with speed $a_n$ and a rate function $I_l\colon\cY\to[0,\infty]$.
\item $\{Z^n\}_{n\in\N}$ is exponentially tight with speed $a_n\colon$ for any $\a>0$, there exists a compact set $K\subset\cY$ such that
\[
\limsup_{n\to\infty}a_n^{-1}\log\P(Z^n\notin K)\le-\a.
\]
\end{itemize}
Then, the $\cY$-valued process $\{Z^n\}_{n\in\N}$ satisfies the LDP with speed $a_n$ and a good rate function $I\colon\cY\to[0,\infty]$ defined by
\begin{equation}\label{eq:EGA_rate_1}
I(y)\coloneqq\sup_{\dl>0}\limsup_{l\to\infty}\inf_{y'\in \bar B(y,\dl)}I_l(y')
\end{equation}
for any $y\in\cY$.
\end{thm}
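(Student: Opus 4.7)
The plan is to establish the upper and lower large deviation bounds for $\{Z^n\}_{n\in\N}$ separately, using the exponential tightness to confine the argument to compact sets and the exponentially good approximation to pass from the per-$l$ LDPs of $\{Z^{n,l}\}_{n\in\N}$ to that of $\{Z^n\}_{n\in\N}$. Before starting the bounds I would record that $I$ defined by~\eqref{eq:EGA_rate_1} is automatically lower semicontinuous: for each fixed $\dl>0$ and $l\in\N$ the map $y\mapsto\inf_{y'\in\bar B(y,\dl)}I_l(y')$ is lower semicontinuous in $y$, so $y\mapsto\limsup_{l\to\infty}\inf_{y'\in\bar B(y,\dl)}I_l(y')$ is as well, and $I$ is then a supremum of lower semicontinuous functions. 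The goodness of $I$ (compactness of sublevel sets) will follow at the end, once the upper LDP bound is in hand, by intersecting $\{I\le\a\}$ with the compact sets provided by exponential tightness.

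For the upper bound on a closed set $F\subset\cY$, fix any $b<\inf_{y\in F}I(y)$. For each $y\in F$ choose $\dl_y>0$ with $\limsup_{l\to\infty}\inf_{y'\in\bar B(y,\dl_y)}I_l(y')>b$. By exponential tightness, select a compact $K\subset\cY$ satisfying $\limsup_{n\to\infty}a_n^{-1}\log\P(Z^n\notin K)\le-b$, cover the compact set $F\cap K$ by finitely many balls $B(y_j,\dl_{y_j}/2)$, and to each ball apply the inclusion
\[
\{Z^n\in\bar B(y_j,\dl_{y_j}/2)\}\subset\{Z^{n,l}\in\bar B(y_j,\dl_{y_j})\}\cup\{d_\cY(Z^n,Z^{n,l})>\dl_{y_j}/2\}.
\]
The LDP upper bound for $\{Z^{n,l}\}_{n\in\N}$ controls the first summand by $-\inf_{y'\in\bar B(y_j,\dl_{y_j})}I_l(y')$, while the exponentially good approximation forces $\limsup_{n\to\infty}a_n^{-1}\log\P(d_\cY(Z^n,Z^{n,l})>\dl_{y_j}/2)\to-\infty$ as $l\to\infty$; combining these with the union bound over $j$ and the tail estimate on $K^c$ produces $\limsup_{n\to\infty}a_n^{-1}\log\P(Z^n\in F)\le-b$, and $b<\inf_F I$ was arbitrary. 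For the lower bound on an open set $G\subset\cY$, fix $y\in G$ with $I(y)<\infty$ and pick $\dl>0$ with $B(y,\dl)\subset G$; then
\[
\P(Z^n\in G)\ge\P(Z^{n,l}\in B(y,\dl/2))-\P(d_\cY(Z^n,Z^{n,l})>\dl/2).
\]
Applying the LDP lower bound for $\{Z^{n,l}\}_{n\in\N}$ to $B(y,\dl/2)$, controlling the second term by the exponentially good approximation, and using $\limsup_{l\to\infty}\inf_{y'\in\bar B(y,\dl/2)}I_l(y')\le I(y)$ yield $\liminf_{n\to\infty}a_n^{-1}\log\P(Z^n\in G)\ge-I(y)$; optimizing over $y\in G$ finishes the bound.

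The principal technical obstacle is the nested order of the three limit operations $n\to\infty$, $l\to\infty$, and $\dl\downarrow0$: the exponentially good approximation supplies only the compound bound $\lim_{l\to\infty}\limsup_{n\to\infty}a_n^{-1}\log\P(d_\cY(Z^n,Z^{n,l})>\dl)=-\infty$, which forces $l\to\infty$ to be taken strictly after $n\to\infty$, and only afterwards may one optimize over $\dl$. The definition of $I$ in~\eqref{eq:EGA_rate_1} is tailored exactly to this ordering, so the real work lies in the bookkeeping of limits rather than in any deep probabilistic estimate; the rest is a mechanical application of the LDPs for the approximants together with the compact-cover argument made available by exponential tightness.
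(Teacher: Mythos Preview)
The paper does not supply its own proof of this theorem: it simply records that the statement follows by combining Lemma~1.2.18 and Theorem~4.2.16 of Dembo--Zeitouni. Your proposal is essentially the standard argument behind those results, and the overall strategy---lower bound via a single ball and one approximant, upper bound via exponential tightness plus a finite cover, goodness afterwards from tightness and the already-established LDP---is correct.

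There is, however, one genuine slip in your justification of lower semicontinuity. The claim that for each fixed $\delta>0$ and $l\in\N$ the map $y\mapsto\inf_{y'\in\bar B(y,\delta)}I_l(y')$ is lower semicontinuous is false in general metric spaces. For instance, in $\ell^2$ take $I_l$ equal to $0$ on the closed discrete set $\{(1+1/n)e_n:n\ge1\}$ and $+\infty$ elsewhere; then with $\delta=1$ one has $\inf_{\bar B(0,1)}I_l=\infty$, yet for $y_k=(1/k)e_k\to0$ the ball $\bar B(y_k,1)$ contains $(1+1/k)e_k$, so the infimum there is $0$. The correct route to lower semicontinuity of $I$ uses the supremum over $\delta$: if $y_n\to y$ and $d_\cY(y_n,y)<\delta/2$, then $\bar B(y_n,\delta/2)\subset\bar B(y,\delta)$, hence
\[
\limsup_{l\to\infty}\inf_{\bar B(y,\delta)}I_l\le\limsup_{l\to\infty}\inf_{\bar B(y_n,\delta/2)}I_l\le I(y_n),
\]
and taking $\liminf_n$ followed by $\sup_\delta$ gives $I(y)\le\liminf_n I(y_n)$. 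With this correction your argument goes through; everything else (the inclusion trick for each ball, handling the $l$-limit separately for each of the finitely many cover elements, and the final goodness step) is sound.
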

\begin{rem}\label{rem:EGA}
We remark on the uniqueness of the zero point and the convexity of the rate function $I$ in Theorem~\ref{thm:EGA}.
\begin{enumerate}
\item (Uniqueness of the zero point)~Suppose that $\{Z^{n,l}\}_{n,l\in\N}$ satisfies the following slightly stronger condition than the usual exponentially good approximation condition: for any $\dl>0$, there exists $L_\dl\in\N$ such that
\[
\limsup_{n\to\infty}a_n^{-1}\log\P(\|Z^n-Z^{n,l}\|_{\R^m}>\dl)=-\infty
\]
for any $l\ge L_\dl$.
In this case, by a simple modification of the proof of Theorem~\ref{thm:EGA}, the rate function $I$ is given by
\begin{equation}\label{eq:EGA_rate_2}
I(y)=\sup_{\dl>0}\sup_{l\ge L_\dl}\inf_{y'\in \bar B(y,\dl)}I_l(y').
\end{equation}
From~\eqref{eq:EGA_rate_2}, it is not hard to verify that if $I_l$ has a unique zero point for each $l\in\N$, then so does $I$.
In fact, the unique zero point of $I$ is given as the limit of the unique zero point of $I_l$ with respect to $l$.
\item (Convexity)~Suppose that $\cX=\R^h$ and $\cY=\R^{h'}$ for some $h,h'\in\N$.
Then,~\eqref{eq:EGA_rate_1} implies that if the rate function $I_l$ is convex for each $l\in\N$, then so is $I$.
\end{enumerate}
\end{rem}

\section{Large deviation principle for random measures}\label{sec:LDP_rdm_meas}
In this section, we provide a sufficient condition for an LDP for random measures.
Theorem~\ref{thm:LDP_rdm_meas} is the main statement in this section, which will be used for proving the LDP for persistence diagrams in Section~\ref{sec:LDP_PDs}.

Let $\Dl$ be a (general) locally compact Hausdorff space with countable base, hence necessarily $\Dl$ is a complete and separable metric space, the so-called Polish space.
Let $C_c(\Dl)$ be the set of all real-valued continuous functions on $\Dl$ with compact support.
A Borel measure $\xi$ on $\Dl$ is called a \textit{Radon measure} if $\xi(K)<\infty$ for every compact set $K\subset\Dl$.
Let $\cM(\Dl)$ denote the set of all Radon measures on $\Dl$.
We equip $\cM(\Dl)$ with the vague topology, i.e., the weakest topology such that for every $f\in C_c(\Dl)$, the map $\cM(\Dl)\ni\xi\mapsto\xi f\coloneqq\int_\Dl f\,d\xi\in\R$ is continuous.
In fact, $\cM(\Dl)$ with the vague topology is a Polish space~(see, e.g., Lemma~4.6 in~\cite{Kal17}).

Given a Radon measure $\xi\in\cM(\Dl)$, we define a linear functional $L_\xi$ on $C_c(\Dl)$ by $L_\xi(f)\coloneqq\xi f$ for any $f\in C_c(\Dl)$.
Note that $L_\xi$ is positive in the sense that $L_\xi(f)\ge0$ for any $f\in C_c(\Dl)$ with $f\ge0$.
It is well known that there exists a one-to-one correspondence between $\cM(\Dl)$ and the set of all positive linear functionals on $C_c(\Dl)$, and the correspondence is given by $\xi\mapsto L_\xi$.
\begin{thm}[Riesz--Markov--Kakutani representation theorem]\label{thm:RMK}
Let $\Dl$ be a locally compact Hausdorff space with countable base.
Then, for any positive linear functional $L$ on $C_c(\Dl)$, there exists a unique Radon measure $\xi$ on $\Dl$ such that $L(f)=\xi f\coloneqq\int_\Dl f\,d\xi$ for any $f\in C_c(\Dl)$.
\end{thm}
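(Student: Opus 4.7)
The plan is to follow the classical Carath\'eodory construction, which is the standard route to Riesz--Markov--Kakutani: use the positive functional $L$ to generate an outer measure on $\Dl$, restrict to the Borel $\sigma$-algebra, and then show the resulting measure is Radon and represents $L$. First, for every open set $U \subset \Dl$ I would define
\[
\mu(U) \coloneqq \sup\{L(f) \mid f \in C_c(\Dl),\ 0 \le f \le 1,\ \supp f \subset U\},
\]
and extend to arbitrary $E \subset \Dl$ by $\mu^*(E) \coloneqq \inf\{\mu(U) \mid U \supset E,\ U \text{ open}\}$. The local compactness and second countability of $\Dl$, combined with Urysohn's lemma, yield the continuous partitions of unity that serve as the key technical device throughout; in particular, they give subadditivity of $\mu$ on open sets, which in turn gives $\sigma$-subadditivity of $\mu^*$.

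Next I would verify that every Borel set is Carath\'eodory $\mu^*$-measurable, so that $\xi \coloneqq \mu^*|_{\cB(\Dl)}$ is a genuine Borel measure. Finiteness on compacta follows by choosing, for a given compact $K$, some $f \in C_c(\Dl)$ with $f \ge \1_K$ (which exists by Urysohn together with local compactness) and noting $\xi(K) \le L(f) < \infty$; hence $\xi$ is Radon. Inner regularity on open sets and outer regularity on Borel sets are baked into the construction.

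The main step, and where the chief obstacle lies, is establishing the representation $L(f) = \xi f$ for all $f \in C_c(\Dl)$. By linearity and the decomposition $f = f^+ - f^-$, it suffices to handle $0 \le f \le 1$ with $\supp f$ contained in some compact $K$. The standard device is to fix $N \in \N$ and, using Urysohn-type partitions of unity subordinate to open refinements of the level sets $\{f > (k-1)/N\}$ for $k=1,\ldots,N$, sandwich $L(f)$ between Riemann-type sums
\[
\frac1N\sum_{k=1}^N \xi(\{f \ge k/N\}) \le L(f) \le \frac1N\sum_{k=1}^N \xi(\{f > (k-1)/N\}),
\]
and then let $N \to \infty$, at which point monotone convergence identifies both bounds with $\xi f$. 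The delicate bookkeeping needed to align the partition of unity with the outer-regular definition of $\xi$, especially the two-sided comparison of $L$ on each slice with the $\xi$-measure of nested open sets, is the technical heart of the argument.

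Uniqueness is then straightforward: if $\xi_1,\xi_2$ are Radon measures both representing $L$, then $\xi_1 f = \xi_2 f$ for every $f \in C_c(\Dl)$. Approximating $\1_U$ from below by a monotone sequence in $C_c(\Dl)$ (available because $\Dl$ is locally compact with countable base) forces $\xi_1(U) = \xi_2(U)$ for every open $U$, and outer regularity together with a standard $\pi$-$\lambda$ argument propagates the equality to all of $\cB(\Dl)$.
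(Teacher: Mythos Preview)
Your outline is the standard Carath\'eodory-based proof of the Riesz--Markov--Kakutani theorem and is correct in structure; the paper, however, does not prove this statement at all---it is quoted as a classical result and used as a black box in the subsequent remark identifying $\cM(\Dl)$ with the positive cone $C_c(\Dl)'_+$. So there is nothing to compare against: you have supplied a genuine proof sketch where the paper simply cites the theorem.
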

\begin{rem}\label{rem:RMK}
Let $C_c(\Dl)'$ be the set of all linear functionals on $C_c(\Dl)$, and let $C_c(\Dl)'_+$ denote the subset of $C_c(\Dl)'$ consisting of all positive linear functionals.
It follows from Theorem~\ref{thm:RMK} and the preceding discussion that the map $\Phi\colon\cM(\Dl)\ni\xi\to L_\xi\in C_c(\Dl)'_+$ is bijective.
We equip $C_c(\Dl)'$ with the weak-* topology, i.e., the weakest topology such that for every $f\in C_c(\Dl)$, the evaluation map $\pi_f\colon C_c(\Dl)'\ni L\mapsto L(f)\in\R$ is continuous.
In other words, the weak-* topology on $C_c(\Dl)'$ is generated by all the sets of the form $\pi_f^{-1}(A)$ for some $f\in C_c(\Dl)$ and open set $A\subset\R$.
Hence, we can easily verify that $C_c(\Dl)'_+$ is a closed set of $C_c(\Dl)'$ and that the map $\Phi\colon\cM(\Dl)\to C_c(\Dl)'_+$ is homeomorphism with respect to the vague topology on $\cM(\Dl)$ and the relative topology on $C_c(\Dl)'_+$ induced from the weak-* topology on $C_c(\Dl)'$.
\end{rem}

Our aim in this section is to prove the following theorem.
In what follows, let $\{a_n\}_{n\in\N}$ be a sequence of positive numbers tending to infinity as $n\to\infty$.
\begin{thm}\label{thm:LDP_rdm_meas}
Let $\Dl$ be a locally compact Hausdorff space with countable base.
Let $\{\xi^n\}_{n\in\N}$ be an $\cM(\Dl)$-valued process.
Assume that for any $m\in\N$ and $f_1,f_2,\ldots,f_m\in C_c(\Dl)$, the $\R^m$-valued process $\{(\xi^nf_1,\xi^nf_2,\ldots,\xi^nf_m)\}_{n\in\N}$ satisfies an LDP with speed $a_n$ and a good rate function $I_{f_1,f_2,\ldots,f_m}\colon\R^m\to[0,\infty]$.
Then, the $\cM(\Dl)$-valued process $\{\xi^n\}_{n\in\N}$ satisfies the LDP with speed $a_n$ and a good rate function $I\colon\cM(\Dl)\to[0,\infty]$ defined by
\begin{equation}\label{eq:LDP_rdm_meas}
I(\xi)\coloneqq\sup_{m\in\N}\sup_{f_1,f_2,\ldots,f_m\in C_c(\Dl)}I_{f_1,f_2,\ldots,f_m}((\xi f_1,\xi f_2,\ldots,\xi f_m))
\end{equation}
for any $\xi\in\cM(\Dl)$.
Furthermore, the following statements hold.
\begin{enumerate}
\item If $I_f$ has a unique zero point for every $f\in C_c(\Dl)$, then so does $I$.
\item Suppose that $I_{f_1,f_2,\ldots,f_m}$ is convex for any $m\in\N$ and $f_1,f_2,\ldots,f_m\in C_c(\Dl)$, and also that
\[
\sup_{n\in\N}a_n^{-1}\log\E[\exp(a_n\xi^nf)]<\infty
\]
for any $f\in C_c(\Dl)$.
Then, for every $f\in C_c(\Dl)$, the limit
\[
\varphi(f)\coloneqq\lim_{n\to\infty}a_n^{-1}\log\E[\exp(\xi^nf)]
\]
exists in $\R$, and it holds that
\begin{equation}\label{eq:LDP_rdm_meas_1}
I(\xi)=\sup_{f\in C_c(\Dl)}\{\xi f-\varphi(f)\}
\end{equation}
for any $\xi\in\cM(\Dl)$.
\end{enumerate}
\end{thm}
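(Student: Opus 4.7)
The plan is to reduce the theorem to the Dawson--Gärtner projective-limit theorem (Theorem~4.6.1 of~\cite{DZ10}) via the identification provided in Remark~\ref{rem:RMK}. Recalling that the map $\Phi\colon\cM(\Dl)\to C_c(\Dl)'_+$, $\Phi(\xi)=L_\xi$, is a homeomorphism when $C_c(\Dl)'_+$ inherits the weak-$*$ topology, and that $C_c(\Dl)'_+$ sits as a closed subset of $\R^{C_c(\Dl)}$ (linearity and positivity are coordinate-wise closed conditions in the product topology), I regard $\{\Phi(\xi^n)\}_{n\in\N}$ as an $\R^{C_c(\Dl)}$-valued process. Since $\R^{C_c(\Dl)}$ is the projective limit of the finite products $\R^{\{f_1,\ldots,f_m\}}$ over finite subsets of $C_c(\Dl)$ directed by inclusion, and the canonical projection of $\Phi(\xi^n)$ onto $\R^{\{f_1,\ldots,f_m\}}$ is precisely $(\xi^nf_1,\ldots,\xi^nf_m)$, the hypothesis on the finite-dimensional distributions is exactly what Dawson--Gärtner requires.

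This application yields an LDP for $\{\Phi(\xi^n)\}_{n\in\N}$ in $\R^{C_c(\Dl)}$ with speed $a_n$ and good rate function
\[
\widetilde I(y):=\sup_{m\in\N}\sup_{f_1,\ldots,f_m\in C_c(\Dl)}I_{f_1,\ldots,f_m}(y_{f_1},\ldots,y_{f_m}).
\]
Because $\Phi(\xi^n)\in C_c(\Dl)'_+$ almost surely, testing against open sets disjoint from the closed subset $C_c(\Dl)'_+$ forces $\widetilde I\equiv\infty$ off $C_c(\Dl)'_+$; the LDP therefore restricts cleanly to $C_c(\Dl)'_+$, and transporting it back through the homeomorphism $\Phi^{-1}$ produces the claimed LDP on $\cM(\Dl)$ with rate function $I$ given by~\eqref{eq:LDP_rdm_meas}.

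For statement~(1), the goodness of $\widetilde I$ supplies some $y^*\in C_c(\Dl)'_+$ with $\widetilde I(y^*)=0$; equivalently, there exists $\widehat\xi\in\cM(\Dl)$ with $I(\widehat\xi)=0$. Then $I_f(\widehat\xi f)\le I(\widehat\xi)=0$ for every $f\in C_c(\Dl)$, and the unique-zero assumption on $I_f$ forces $\widehat\xi f=\widehat x_f$, where $\widehat x_f$ denotes this unique zero. The same reasoning shows any zero $\xi^*$ of $I$ satisfies $\xi^*f=\widehat x_f=\widehat\xi f$ for all $f\in C_c(\Dl)$, whence $\xi^*=\widehat\xi$ by Theorem~\ref{thm:RMK}.

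For statement~(2), convexity of each $I_{f_1,\ldots,f_m}$ combined with linearity of $\xi\mapsto(\xi f_1,\ldots,\xi f_m)$ makes $I$ convex on $\cM(\Dl)$, regarded as a subset of the locally convex weak-$*$ dual of $C_c(\Dl)$. The exponential-moment hypothesis gives $\sup_n a_n^{-1}\log\E[\exp(a_n\xi^n f)]<\infty$ for every $f\in C_c(\Dl)$, so the topological-vector-space Fenchel--Legendre identification Theorem~\ref{thm:DZ_4.5.10_general} applies and yields both the existence of the limit $\varphi(f)$ and the representation~\eqref{eq:LDP_rdm_meas_1}. The most delicate step is this final identification: one must verify that the duality $(\cM(\Dl),C_c(\Dl))$ with the vague/weak-$*$ topology fits the hypotheses of Theorem~\ref{thm:DZ_4.5.10_general}, after which the equality of the two formulas~\eqref{eq:LDP_rdm_meas} and~\eqref{eq:LDP_rdm_meas_1} for $I$ follows automatically because every good convex lower semicontinuous rate function on a locally convex Hausdorff space coincides with its own biconjugate.
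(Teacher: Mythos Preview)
Your argument is correct and follows essentially the same route as the paper. The only cosmetic difference is the ambient space in which Dawson--G\"artner is invoked: you work in the full product $\R^{C_c(\Dl)}$ indexed by finite subsets, whereas the paper works in the algebraic dual $C_c(\Dl)'$ indexed by finite-dimensional subspaces (via the packaged form \cite[Theorem~4.6.9]{DZ10}, stated as Theorem~\ref{thm:app_DG}); since $C_c(\Dl)'_+$ is closed in both and carries the same subspace topology, the restriction step (Lemma~\ref{lem:LDP_restr}) and the transport through $\Phi$ are identical. For Statement~(2) the paper is slightly more explicit than you are about applying Theorem~\ref{thm:DZ_4.5.10_general} in the locally convex TVS $C_c(\Dl)'$ (using Corollary~\ref{cor:loc_convex}) rather than on the non-linear cone $\cM(\Dl)$ itself, but you correctly flag this as the delicate point and your convexity argument for $\widetilde I$ extends verbatim to $C_c(\Dl)'$, so the identification goes through.
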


A key for the proof is the Dawson--G\"artner theorem, which is a useful tool to lift a collection of LDPs in relatively small spaces into an LDP in a larger space identified as their projective limit.
We begin by reviewing the notion of projective system and projective limit.
Let $(J,\le)$ be a partially ordered set.
Assume that for any $i,j\in J$, there exists $k\in J$ such that both $i\le k$ and $j\le k$ hold.
Let $\{\cY_j\}_{j\in J}$ be a family of Hausdorff spaces, and let $\{p_{ij}\colon\cY_j\to\cY_i\}_{i\le j\in J}$ be a family of continuous maps satisfying that $p_{ij}\circ p_{jk}=p_{ik}$ for any $i\le j\le k$ and that $p_{jj}$ is the identity map on $\cY_j$ for any $j\in J$.
A pair $(\cY_j,p_{ij})_{i\le j\in J}$ is called a \textit{projective system}.
The \textit{projective limit} of a projective system $(\cY_j,p_{ij})_{i\le j\in J}$ is defined as
\[
\widetilde\cY=\plim[j\in J]\cY_j
\coloneqq\left\{x=(y_j)_{j\in J}\in\prod_{j\in J}\cY_j\relmiddle|y_i=p_{ij}(y_j)\text{ for any }i\le j\in J\right\},
\]
equipped with the relative topology induced from the product topology of $\prod_{j\in J}\cY_j$.
For each $j\in J$, let $p_j\colon\widetilde\cY\to\cY_j$ denote the canonical projection that maps $(y_j)_{j\in J}\in\widetilde\cY$ to $y_j\in\cY_j$.

\begin{thm}[Dawson--G\"artner theorem]\label{thm:DG}
Let $\widetilde\cY$ be the projective limit of a projective system $(\cY_j,p_{ij})_{i\le j\in J}$, and let $\{S^n\}_{n\in\N}$ be a $\widetilde\cY$-valued process.
Suppose that for every $j\in J$, the $\cY_j$-valued process $\{p_j(S^n)\}_{n\in\N}$ satisfies the LDP with speed $a_n$ and a good rate function $I_j\colon\cY_j\to[0,\infty]$.
Then, the $\widetilde\cY$-valued process $\{S^n\}_{n\in\N}$ satisfies the LDP with speed $a_n$ and good rate function $I\colon\widetilde\cY\to[0,\infty]$ defined by
\[
I(x)\coloneqq\sup_{j\in J}I_j(p_j(x))
\]
for any $x\in\widetilde\cY$.
\end{thm}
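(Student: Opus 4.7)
My plan is to establish the three ingredients of an LDP---a good rate function, the upper bound on closed sets, and the lower bound on open sets---by using the directedness of $(J,\le)$ together with the contraction principle (Theorem~\ref{thm:contraction}). I would first verify that $I=\sup_{j\in J}(I_j\circ p_j)$ is a good rate function: lower semicontinuity is automatic as a supremum of lower semicontinuous functions, and any sublevel set $\{I\le\alpha\}$ coincides with $\widetilde\cY\cap\prod_{j\in J}\{I_j\le\alpha\}$, which is closed in $\widetilde\cY$ and contained in the Tychonoff product of the compact sublevel sets $\{I_j\le\alpha\}$, hence compact.

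A preliminary step is to use the directedness of $J$ to upgrade the hypothesis to joint LDPs for finite projections. Given any finite $J_0\subset J$, pick $k\in J$ dominating $J_0$; then $p_{J_0}(S^n)$ is the image of $p_k(S^n)$ under the continuous map $y_k\mapsto(p_{jk}(y_k))_{j\in J_0}$, so Theorem~\ref{thm:contraction} yields an LDP for $\{p_{J_0}(S^n)\}_{n\in\N}$ with good rate function $\tilde I_{J_0}$. The same principle applied between any $i\le j$ also produces the consistency $I_i(p_{ij}(y))\le I_j(y)$, hence $\tilde I_{J_0}(p_{J_0}(x))\le I_k(p_k(x))\le I(x)$ for every $x\in\widetilde\cY$. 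For the lower bound, given open $G\subset\widetilde\cY$ and $x\in G$, the defining base of the projective limit topology supplies finite $J_0$ and open $V\subset\prod_{j\in J_0}\cY_j$ with $x\in p_{J_0}^{-1}(V)\subset G$; applying the joint LDP gives
\[
\liminf_{n\to\infty}a_n^{-1}\log\P(S^n\in G)\ge-\tilde I_{J_0}(p_{J_0}(x))\ge-I(x),
\]
and taking the infimum over $x\in G$ closes this half.

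The hard part is the upper bound. For any closed $F\subset\widetilde\cY$ and any finite $J_0\subset J$, the inclusion $F\subset p_{J_0}^{-1}(\overline{p_{J_0}(F)})$ together with the joint LDP yields
\[
\limsup_{n\to\infty}a_n^{-1}\log\P(S^n\in F)\le-\inf_{y\in\overline{p_{J_0}(F)}}\tilde I_{J_0}(y),
\]
and it remains to show that the supremum of these bounds over finite $J_0$ is at least $\inf_{x\in F}I(x)$. I would argue by contradiction: if it were strictly less than some $\alpha<\inf_F I$, then for every finite $J_0$ one can select $y^{J_0}\in\overline{p_{J_0}(F)}$ with $\tilde I_{J_0}(y^{J_0})\le\alpha$ and, via goodness of the dominating $I_k$ and the definition of $\tilde I_{J_0}$ as an infimum, lift it to a point $x^{J_0}\in\widetilde\cY$ with $I(x^{J_0})\le\alpha$ and $p_{J_0}(x^{J_0})$ arbitrarily close to $y^{J_0}$. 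Compactness of $\{I\le\alpha\}$ from the first step extracts a subnet limit $x^\ast\in\{I\le\alpha\}$; a neighborhood-base argument in the projective limit topology, together with closedness of $F$, then forces $x^\ast\in F$, contradicting $\alpha<\inf_F I$.
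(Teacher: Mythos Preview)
The paper does not prove Theorem~\ref{thm:DG}; the Dawson--G\"artner theorem is quoted from~\cite{DZ10} as a known result, so there is nothing in the paper to compare against and your proposal must stand on its own. Your arguments for goodness of $I$ and for the lower bound are correct.

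The upper bound has a genuine gap at the lifting step. From $\tilde I_{J_0}(y^{J_0})\le\alpha$ and goodness of $I_k$ (for $k$ dominating $J_0$) you obtain a point $w_k\in\cY_k$ with $I_k(w_k)\le\alpha$ projecting to $y^{J_0}$---but this is a point of $\cY_k$, not of $\widetilde\cY$. Membership in $\{I\le\alpha\}$ requires $I_j(p_j(x))\le\alpha$ for \emph{every} $j\in J$, and there is no mechanism to extend $w_k$ coherently to larger indices while preserving the bound; indeed $p_k(\{I\le\alpha\})$ may be a proper subset of $\{I_k\le\alpha\}$, so $w_k$ need not lie in the image of any point of $\widetilde\cY$ with $I\le\alpha$. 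Hence the net $(x^{J_0})$ you wish to feed into the compactness of $\{I\le\alpha\}$ has not actually been constructed. The standard repair (as in~\cite{DZ10}) avoids lifting individual points: set $F_j\coloneqq\overline{p_j(F)}\cap\{I_j\le\alpha\}$, note each $F_j$ is nonempty compact (this is precisely the contradiction hypothesis) and that $p_{ij}(F_j)\subset F_i$ for $i\le j$ (by continuity of $p_{ij}$ and the consistency $I_i\circ p_{ij}\le I_j$), so $(F_j,p_{ij})$ is a projective system of nonempty compact Hausdorff spaces whose projective limit is therefore nonempty. Any $x^\ast$ in that limit lies in $\widetilde\cY$, satisfies $I(x^\ast)\le\alpha$, and has $p_j(x^\ast)\in\overline{p_j(F)}$ for all $j$; your closing neighbourhood-base argument then correctly forces $x^\ast\in F$, yielding the desired contradiction.
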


An important application of Theorem~\ref{thm:DG} is the case where the projective limit is identified as an algebraic dual, equipped with the weak-* topology, of an infinite dimensional real vector space as follows.
Given a real vector space $W$, let $W'$ denote its \textit{algebraic dual}, i.e., the set of all linear functionals on $W$.
We define a topological space $\cX$ as the algebraic dual $W'$ with the weak-* topology, i.e., the weakest topology such that for every $w\in W$, the evaluation map $\pi_w\colon\cX\ni x\mapsto x(w)\in\R$ is continuous.
Then, $\cX$ can be regarded as a projective limit in the following way.
First, let $\cV$ be the set of all finite dimensional linear subspaces of $W$, equipped with a partial order $\le$ simply given by the inclusion.
Next, for each $V\in\cV$, we define $\cY_V$ as the algebraic dual of $V$ equipped with the weak-* topology.
This makes $\cY_V$ a Hausdorff space.
Also, for any $V\le U\in\cV$, we define a continuous map $p_{VU}\colon\cY_U\to\cY_V$ by the restriction: $p_{VU}(L)\coloneqq L|_U$ for any $L\in\cY_V$.
Then, obviously, $(\cY_V,p_{VU})_{V\le U\in\cV}$ is a projective system.
Let $\widetilde\cY$ denote the projective limit of $(\cY_V,p_{VU})_{V\le U\in\cV}$.
Finally, we define a map $\Psi\colon\cX\to\widetilde\cY$ by $\Psi(L)\coloneqq(L|_V)_{V\in\cV}$ for any $L\in\cX$.
One can show that the map $\Psi$ is in fact homeomorphism using the consistency condition: every $(y_V)_{V\in\cV}\in\widetilde\cY$ satisfies that $y_V=p_{VU}(y_U)$ for any $V\le U\in\cV$~(see Theorem~4.6.9 in~\cite{DZ10} for details).
Consequently, the problem of finding an LDP in the topological space $\cX$ is transferred to that in the projective limit $\widetilde\cY$, which reduces to LDPs in finite dimensional linear subspaces of $W$ by Theorem~\ref{thm:DG}. 
Such application of Theorem~\ref{thm:DG} is summarized into the following useful theorem.
\begin{thm}[{\cite[Theorem~4.6.9]{DZ10}}]\label{thm:app_DG}
Let $W$ be a real vector space, and let $\cX$ be its algebraic dual $W'$ equipped with the weak-* topology.
Let $\{x^n\}_{n\in\N}$ be an $\cX$-valued process.
Assume that for any $m\in\N$ and $w_1,w_2,\ldots,w_m\in W$, the $\R^m$-valued process $\{(x^n(w_1),x^n(w_2),\ldots,x^n(w_m))\}_{n\in\N}$ satisfies the LDP with speed $a_n$ and a good rate function $J_{w_1,w_2,\ldots,w_m}\colon\R^m\to[0,\infty]$.
Then, the $\cX$-valued process $\{x^n\}_{n\in\N}$ satisfies the LDP with speed $a_n$ and good rate function $J\colon\cX\to[0,\infty]$ defined by
\[
J(x)\coloneqq\sup_{m\in\N}\sup_{w_1,w_2,\ldots,w_m\in W}J_{w_1,w_2,\ldots,w_m}((x(w_1),x(w_2),\ldots,x(w_m)))
\]
for any $x\in\cX$.
\end{thm}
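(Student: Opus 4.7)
The plan is to reduce the claim to the Dawson--G\"artner theorem (Theorem~\ref{thm:DG}) via the projective-limit identification of $\cX = W'$ (with weak-$\ast$ topology) that is set up in the paragraph preceding the statement. More precisely, I would exploit the homeomorphism $\Psi\colon\cX\to\widetilde\cY$, where $\widetilde\cY=\plim[V\in\cV]\cY_V$ is the projective limit of the algebraic duals $\cY_V=V'$ over the directed set $\cV$ of finite-dimensional subspaces of $W$, ordered by inclusion, and then transport the LDP obtained on $\widetilde\cY$ back to $\cX$ by the contraction principle (Theorem~\ref{thm:contraction}) applied to $\Psi^{-1}$.

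First, I would verify the hypothesis of Theorem~\ref{thm:DG} for the $\widetilde\cY$-valued process $\{\Psi(x^n)\}_{n\in\N}$. Fix $V\in\cV$ and choose any basis $w_1,\ldots,w_m$ of $V$. Then the evaluation map $\cY_V\ni L\mapsto(L(w_1),\ldots,L(w_m))\in\R^m$ is a linear homeomorphism (by definition of the weak-$\ast$ topology on $\cY_V$), and under this identification the canonical projection $p_V\colon\widetilde\cY\to\cY_V$ composed with $\Psi$ sends $x^n$ to $(x^n(w_1),\ldots,x^n(w_m))$. The assumption provides an LDP for this $\R^m$-valued process with speed $a_n$ and good rate function $J_{w_1,\ldots,w_m}$. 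Transporting back through the homeomorphism, the $\cY_V$-valued process $\{p_V(\Psi(x^n))\}_{n\in\N}$ satisfies an LDP with speed $a_n$ and a good rate function $\widetilde J_V\colon\cY_V\to[0,\infty]$ given by $\widetilde J_V(L)=J_{w_1,\ldots,w_m}((L(w_1),\ldots,L(w_m)))$ (one checks via Remark~\ref{rem:contraction} and a change-of-basis argument that this definition does not depend on the chosen basis).

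Applying Theorem~\ref{thm:DG}, the process $\{\Psi(x^n)\}_{n\in\N}$ satisfies the LDP on $\widetilde\cY$ with speed $a_n$ and good rate function $\widetilde J(y)=\sup_{V\in\cV}\widetilde J_V(p_V(y))$. Since $\Psi$ is a homeomorphism, the contraction principle (Theorem~\ref{thm:contraction}) applied to $\Psi^{-1}$ yields the desired LDP for $\{x^n\}_{n\in\N}$ on $\cX$, with rate function $J(x)=\widetilde J(\Psi(x))=\sup_{V\in\cV}\widetilde J_V(x|_V)$. To put this in the form stated in the theorem, I observe that each $V\in\cV$ is spanned by some finite tuple $w_1,\ldots,w_m\in W$, and conversely every finite tuple in $W$ spans some $V\in\cV$. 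Hence
\[
J(x)=\sup_{m\in\N}\sup_{w_1,\ldots,w_m\in W}J_{w_1,\ldots,w_m}((x(w_1),\ldots,x(w_m))),
\]
which matches the stated formula. Goodness of $J$ follows either directly from Theorem~\ref{thm:DG} or from the goodness of each $J_{w_1,\ldots,w_m}$ together with the fact that sublevel sets of a supremum of lower semicontinuous functions are closed intersections of sublevel sets, combined with the Tychonoff-type compactness built into the projective-limit topology.

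The main obstacle I anticipate is the bookkeeping around the homeomorphism $\Psi$ and the basis-independence of $\widetilde J_V$: one must confirm that the LDP assumption, which is phrased in terms of arbitrary finite tuples rather than subspaces, transfers cleanly to a rate function on $\cY_V$, and that the $\sup$ over $\cV$ of these rate functions coincides with the $\sup$ over all finite tuples in $W$. The argument is largely formal once the identification $\cX\cong\widetilde\cY$ (established in the paragraph preceding the theorem, following \cite[Theorem~4.6.9]{DZ10}) is taken as given, and all remaining steps are applications of Theorems~\ref{thm:DG} and~\ref{thm:contraction}.
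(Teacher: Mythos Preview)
Your proposal is correct and follows exactly the approach the paper sketches in the paragraph preceding the theorem: identify $\cX$ with the projective limit $\widetilde\cY$ via the homeomorphism $\Psi$, apply Dawson--G\"artner (Theorem~\ref{thm:DG}) on $\widetilde\cY$, and transport back. The paper itself does not give a detailed proof---it simply cites \cite[Theorem~4.6.9]{DZ10} and indicates that the result is ``such application of Theorem~\ref{thm:DG}''---so your write-up fills in precisely the details the paper leaves implicit, including the basis identification of each $\cY_V$ with $\R^m$ and the translation of the supremum over $\cV$ into a supremum over finite tuples.
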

In the proof of Theorem~\ref{thm:LDP_rdm_meas}, we also use the following basic lemma in the large deviation theory.
\begin{lem}[{\cite[Lemma~4.1.5~(b)]{DZ10}}]\label{lem:LDP_restr}
Let $\cE$ be a closed set of a topological space $\cX$, and let $\{S^n\}_{n\in\N}$ be an $\cX$-valued process such that $S^n\in\cE$ for every $n\in\N$.
If the $\cX$-valued process $\{S^n\}_{n\in\N}$ satisfies an LDP with speed $a_n$ and a good rate function $I'\colon\cX\to[0,\infty]$, then $\{S^n\}_{n\in\N}$ as an $\cE$-valued process satisfies the LDP with speed $a_n$ and good rate function $I'|_\cE\colon\cE\to[0,\infty]$.
\end{lem}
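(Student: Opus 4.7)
The plan is to verify directly, from the definition of an LDP, that $I'|_\cE$ serves as a good rate function for $\{S^n\}_{n\in\N}$ viewed in the subspace topology on $\cE$. Three pieces need to be checked: the rate function is good, the upper bound holds on relatively closed sets, and the lower bound holds on relatively open sets. The hypothesis that $\cE$ is closed in $\cX$ will be invoked exactly once in the upper bound (to promote relatively closed sets to closed sets of $\cX$), and once in the goodness of $I'|_\cE$ (to intersect closed with compact).

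For goodness, lower semicontinuity of $I'|_\cE$ on $\cE$ is immediate from lower semicontinuity of $I'$ on $\cX$ together with the fact that the relative topology is precisely the initial topology making the inclusion $\cE\hookrightarrow\cX$ continuous. For any $\a\in[0,\infty)$, the sublevel set can be written as
\[
\{x\in\cE\mid I'|_\cE(x)\le\a\}=\{x\in\cX\mid I'(x)\le\a\}\cap\cE,
\]
which is the intersection of a compact set (by goodness of $I'$) with the closed set $\cE$, hence compact in $\cX$ and therefore also compact in $\cE$ equipped with the relative topology.

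For the upper bound, let $F\subset\cE$ be closed in the relative topology. Since $\cE$ is closed in $\cX$, $F$ is also closed as a subset of $\cX$. Applying the original LDP to $F$ and noting that $I'|_\cE(x)=I'(x)$ for every $x\in F$ yields
\[
\limsup_{n\to\infty}a_n^{-1}\log\P(S^n\in F)\le-\inf_{x\in F}I'(x)=-\inf_{x\in F}I'|_\cE(x).
\]
For the lower bound, let $G\subset\cE$ be open in the relative topology and write $G=G'\cap\cE$ for some open $G'\subset\cX$. Because $S^n\in\cE$ almost surely, $\P(S^n\in G)=\P(S^n\in G')$, so the original lower bound applied to $G'$ gives
\[
\liminf_{n\to\infty}a_n^{-1}\log\P(S^n\in G)\ge-\inf_{x\in G'}I'(x)\ge-\inf_{x\in G}I'(x)=-\inf_{x\in G}I'|_\cE(x),
\]
where the second inequality uses $G\subset G'$.

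There is no substantive obstacle; the entire argument is bookkeeping between the ambient and the relative topology, and the proof is essentially immediate once one notices that $\P(S^n\in G)=\P(S^n\in G')$ whenever $G=G'\cap\cE$. The hypothesis that $\cE$ is closed in $\cX$ is genuinely needed: without it, a relatively closed set in $\cE$ need not be closed in $\cX$, and one could not simply invoke the ambient upper bound.
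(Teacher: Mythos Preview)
Your proof is correct; it is the standard direct verification from the definition of an LDP in the subspace topology. The paper itself does not supply a proof of this lemma but simply quotes it from \cite[Lemma~4.1.5~(b)]{DZ10}, so there is no in-paper argument to compare against; your write-up is essentially the argument one finds in that reference.
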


Combining Theorem~\ref{thm:app_DG} with Remark~\ref{rem:RMK} and Lemma~\ref{lem:LDP_restr}, we can prove Theorem~\ref{thm:LDP_rdm_meas} except for Statement~(2).
\begin{proof}[Proof of Theorem~\ref{thm:LDP_rdm_meas} except for Statement~(2)]
Note first that $(\Phi(\xi^n))(f)=L_{\xi^n}(f)=\xi^nf$ for every $f\in C_c(\Dl)$ from the definition of $\Phi$ in Remark~\ref{rem:RMK}.
Therefore, it follows from the assumption and Theorem~\ref{thm:app_DG} with $W=C_c(\Dl)$ that the $C_c(\Dl)'$-valued process $\{\Phi(\xi^n)\}_{n\in\N}$ satisfies the LDP with speed $a_n$ and good rate function $I'\colon C_c(\Dl)'\to[0,\infty]$ defined by
\[
I'(L)\coloneqq\sup_{m\in\N}\sup_{f_1,f_2,\ldots,f_m\in C_c(\Dl)}I_{f_1,f_2,\ldots,f_m}((L(f_1),L(f_2),\ldots,L(f_m)))
\]
for any $L\in C_c(\Dl)'$.
Since $C_c(\Dl)'_+\subset C_c(\Dl)'$ is a closed set as mentioned in Remark~\ref{rem:RMK}, Lemma~\ref{lem:LDP_restr} implies that the $C_c(\Dl)'_+$-valued process $\{\Phi(\xi^n)\}_{n\in\N}$ also satisfies the LDP with speed $a_n$ and good rate function $I'|_{C_c(\Dl)'_+}\colon C_c(\Dl)'_+\to[0,\infty]$.
Recalling that the map $\Phi\colon\cM(\Dl)\to C_c(\Dl)'_+$ is homeomorphism, we can conclude that the $\cM(\Dl)$-valued process $\{\xi^n\}_{n\in\N}$ satisfies the LDP with speed $a_n$ and the good rate function defined by~\eqref{eq:LDP_rdm_meas}.

Next, we prove Statement~(1).
The existence of a zero point of $I$ follows immediately from the goodness of $I$.
For the uniqueness of the zero point of $I$, suppose that $I(\xi)=I(\xi')=0$.
Then, $I_f(\xi f)=I_f(\xi'f)=0$ for any $f\in W$ by~\eqref{eq:LDP_rdm_meas}.
The uniqueness of the zero point of $I_f$ implies that $\xi f=\xi'f$, hence necessarily $\xi=\xi'$.
\end{proof}

In order to prove Theorem~\ref{thm:LDP_rdm_meas}~(2), we use a generalization of Theorem~\ref{thm:DZ_4.5.10} to the setting of topological vector spaces.
We first review some notion of topological vector space.
All vector spaces below are over the field of real numbers.
A \textit{topological vector space} $\cX$ is a vector space equipped with a topology such that the vector space operations are continuous, i.e.,
\begin{itemize}
\item the addition $\cX\times\cX\ni(x,x')\mapsto x+x'\in\cX$ is continuous,
\item the scalar multiplication $\R\times\cX\ni(\a,x)\mapsto\a x\in\cX$ is continuous.
\end{itemize}
A topological vector space $\cX$ is said to be \textit{locally convex} if there exists a local base at $0$ consisting of convex sets.
Given a topological vector space $\cX$, let $\cX^*\subset\cX'$ be the subspace consisting of all continuous linear functionals on $\cX$.
We refer to $\cX^*$ as the \textit{topological dual} of $\cX$.
For $x\in\cX$ and $\lm\in\cX'$, define $\langle\lm,x\rangle\coloneqq\lm(x)$ by convention.
The following theorem is useful to obtain a topology on a vector space that makes it a locally convex topological vector space.
\begin{thm}[{\cite[Theorem~3.10]{{Ru91}}}]\label{thm:loc_convex}
Let $\cX$ be a vector space, and let $\cH$ be a separating subspace of $\cX'$, i.e., $\cH\subset\cX'$ is a subspace satisfying that for any $0\neq x\in\cX$, there exists $\lm\in\cH$ such that $\langle\lm,x\rangle\neq0$.
Then, the $\cH$-topology makes $\cX$ into a locally convex Hausdorff topological vector space with $\cX^*=\cH$.
Here, $\cH$-topology is the weakest topology on $\cX$ such that every $\lm\in\cH$ is continuous.
\end{thm}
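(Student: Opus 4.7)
The plan is to verify the four assertions (vector-space topology, local convexity, Hausdorff, and $\cX^*=\cH$) in turn by describing the $\cH$-topology explicitly and then invoking a standard duality lemma for linear functionals.

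First I would fix a convenient sub-base. By definition, the $\cH$-topology is generated by the preimages $\lm^{-1}(U)$ for $\lm\in\cH$ and $U\subset\R$ open, so a local base at $0$ is formed by the sets
\[
V(\lm_1,\ldots,\lm_n;\eps)\coloneqq\{x\in\cX\mid|\lm_i(x)|<\eps\text{ for }i=1,\ldots,n\},
\]
for $n\in\N$, $\lm_i\in\cH$, and $\eps>0$. Each such set is an intersection of preimages of open intervals under linear maps, hence convex and balanced, which already gives local convexity. To see that $\cX$ is Hausdorff, take $x\neq y$; since $\cH$ separates points, pick $\lm\in\cH$ with $\lm(x)\neq\lm(y)$, then split $\R$ into two disjoint open intervals around $\lm(x)$ and $\lm(y)$ and pull back under $\lm$.

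Next I would check that the vector-space operations are continuous. For addition, the identity $\lm(x+y)=\lm(x)+\lm(y)$ shows that if $x-x_0\in V(\lm_1,\ldots,\lm_n;\eps/2)$ and $y-y_0\in V(\lm_1,\ldots,\lm_n;\eps/2)$ then $(x+y)-(x_0+y_0)\in V(\lm_1,\ldots,\lm_n;\eps)$, which is enough because such neighborhoods form a base. For scalar multiplication, writing $\a x-\a_0 x_0=(\a-\a_0)x_0+\a_0(x-x_0)+(\a-\a_0)(x-x_0)$ and evaluating $\lm_i$ on each piece yields an analogous estimate on $|\lm_i(\a x-\a_0 x_0)|$ in terms of $|\a-\a_0|$ and $|\lm_i(x-x_0)|$, so the map is continuous at $(\a_0,x_0)$.

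The main obstacle is the identification $\cX^*=\cH$. The inclusion $\cH\subset\cX^*$ is automatic, since each $\lm\in\cH$ is continuous in its own topology. For the reverse, suppose $\Lm\in\cX^*$. Continuity at $0$ yields $\lm_1,\ldots,\lm_n\in\cH$ and $\eps>0$ with $V(\lm_1,\ldots,\lm_n;\eps)\subset\Lm^{-1}((-1,1))$; by linearity this upgrades to a pointwise bound $|\Lm(x)|\le\eps^{-1}\max_i|\lm_i(x)|$ for all $x\in\cX$. In particular, $\bigcap_{i=1}^n\ker\lm_i\subset\ker\Lm$. The key algebraic step, which I would prove as a small lemma by induction on $n$ or via the map $x\mapsto(\lm_1(x),\ldots,\lm_n(x))\in\R^n$, is that this kernel inclusion forces $\Lm=\sum_{i=1}^n c_i\lm_i$ for some scalars $c_i$. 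Since $\cH$ is a subspace, this expression lies in $\cH$, establishing $\Lm\in\cH$ and completing the proof.
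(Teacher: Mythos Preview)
Your argument is correct and is essentially the standard proof of this result; there are no gaps. Note, however, that the paper does not supply its own proof of this theorem: it is quoted as \cite[Theorem~3.10]{Ru91} and used as a black box, so there is nothing in the paper to compare your approach against beyond the citation itself.
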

\begin{cor}\label{cor:loc_convex}
Let $W$ be a vector space, and let $\cX$ be its algebraic dual $W'$ equipped with the weak-* topology.
Then, $\cX$ is a locally convex Hausdorff topological vector space.
Moreover, $\cX^*$ and $W$ are isomorphic as vector spaces.
\end{cor}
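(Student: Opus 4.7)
The plan is to realize $W$ as a separating subspace of the algebraic dual $\cX'$ via the canonical evaluation pairing, and then invoke Theorem~\ref{thm:loc_convex} directly. Define a linear map $\Theta\colon W\to\cX'$ by
\[
\Theta(w)(L)\coloneqq L(w)\quad\text{for $w\in W$ and $L\in\cX=W'$.}
\]
Each $\Theta(w)$ is indeed a linear functional on $\cX$, and $\Theta$ is linear in $w$, so $\cH\coloneqq\Theta(W)$ is a subspace of $\cX'$.

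First I will verify that $\cH$ separates points of $\cX$: for any nonzero $L\in\cX=W'$, by the very definition of ``nonzero linear functional on $W$'' there exists $w\in W$ with $L(w)\neq0$, i.e.\ $\Theta(w)(L)\neq0$. Second, the $\cH$-topology on $\cX$ is by construction the weakest topology making every $\Theta(w)\colon L\mapsto L(w)$ continuous, which coincides exactly with the weak-* topology on $\cX$ as described in the corollary. Therefore Theorem~\ref{thm:loc_convex} applies with this choice of $\cH$, yielding that $\cX$ is a locally convex Hausdorff topological vector space whose topological dual satisfies $\cX^*=\cH$.

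To conclude, I will show that $\Theta\colon W\to\cH=\cX^*$ is a vector space isomorphism. Linearity and surjectivity onto $\cH$ are immediate from the construction. The only nontrivial point is injectivity: if $\Theta(w)=0$ then $L(w)=0$ for every $L\in W'$, so it suffices to argue that any $w\neq0$ admits some $L\in W'$ with $L(w)\neq0$. This is a standard application of the existence of a Hamel basis (equivalently, the axiom of choice): extend $\{w\}$ to a basis of $W$ and define $L\in W'$ to take value $1$ at $w$ and $0$ on the remaining basis vectors. This last step is the only place where the argument uses anything beyond unpacking definitions; the rest amounts to checking that $\cH=\Theta(W)$ fits the hypotheses of Theorem~\ref{thm:loc_convex} and then identifying $\cH$ with $W$ through $\Theta$.
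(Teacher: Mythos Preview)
Your proof is correct and follows essentially the same approach as the paper: both realize $W$ inside $\cX'$ via the evaluation pairing (your $\Theta$ is exactly the paper's $\iota$, $\iota(w)=\pi_w$), check that its image is separating and generates the weak-* topology, and then invoke Theorem~\ref{thm:loc_convex} to obtain $\cX^*=\cH\cong W$. The only difference is that the paper asserts the injectivity of $\iota$ without justification, whereas you spell out the Hamel-basis argument explicitly.
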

\begin{proof}
We define an injective linear map $\iota\colon W\to\cX'$ by $\iota(w)\coloneqq\pi_w$ for any $w\in W$.
Recall here that $\pi_w$ is the evaluation map.
Set $\cH\coloneqq\im(\iota)$.
Then, the $\cH$-topology on $W'$ is nothing but the weak-* topology.
Additionally, we can easily verify that $\cH$ is a separating subspace of $\cX'$.
Therefore, $\cX$ is a locally convex Hausdorff topological vector space by Theorem~\ref{thm:loc_convex}.
Furthermore, $\cX^*=\cH$, which together with the injectivity of $\iota$ implies that $\cX^*$ and $W$ are isomorphic as vector spaces.
\end{proof}
The following theorem states that the good convex rate function for an LDP in a locally convex Hausdorff topological vector space is identified as the Fenchel--Legendre transform of the limiting logarithmic moment generating function (cf. Theorem~\ref{thm:DZ_4.5.10}).
\begin{thm}[{\cite[Theorem~4.5.10]{DZ10}}]\label{thm:DZ_4.5.10_general}
Let $\cX$ be a locally convex Hausdorff topological vector space, and let $\{S^n\}_{n\in\N}$ be an $\cX$-valued process.
Suppose that the $\cX$-valued process $\{S^n\}_{n\in\N}$ satisfies an LDP with speed $a_n$ and a good convex rate function $I\colon\cX\to[0,\infty]$, and also that
\[
\sup_{n\in\N}a_n^{-1}\log\E[\exp(a_n\langle\lm,S^n\rangle)]<\infty
\]
for any $\lm\in\cX^*$.
Then, for every $\lm\in\cX^*$, the limit
\[
\varphi(\lm)\coloneqq\lim_{n\to\infty}a_n^{-1}\log\E[\exp(a_n\langle\lm,S^n\rangle)]
\]
exists in $\R$, and $I=\varphi^*$ holds.
Here, $\varphi^*\colon\cX\to[0,\infty]$ is the Fenchel--Legendre transform of $\varphi\colon\cX^*\to[0,\infty]$$:$
\[
\varphi^*(x)\coloneqq\sup_{\lm\in\cX^*}\{\langle\lm,x\rangle-\varphi(\lm)\}
\]
for any $x\in\cX$.
\end{thm}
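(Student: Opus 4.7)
My plan is to derive the theorem in two stages: first identify the limit of the normalized logarithmic moment generating functions with the Fenchel--Legendre transform $I^*$ of the rate function, and then use convexity plus the Fenchel--Moreau biduality to invert this identity and obtain $I=\varphi^*$.

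For the first stage I would apply a Varadhan-type lemma to the continuous linear functional $\lambda \in \cX^*$ acting on $S^n/a_n$-scale, except that here the standard version for bounded continuous functionals does not immediately apply because $\langle\lambda,\cdot\rangle$ is unbounded. The way around this is the usual truncation-and-tail argument: for each $M>0$ split $\E[\exp(a_n\langle\lambda,S^n\rangle)]$ into its restriction to $\{|\langle\lambda,S^n\rangle|\le M a_n\}$ and its tail. On the bounded part the LDP gives, by the standard Laplace-principle upper and lower bounds for closed/open sets, respectively, that
\[
\limsup_{n\to\infty} a_n^{-1}\log \E\bigl[\exp(a_n\langle\lambda,S^n\rangle);|\langle\lambda,S^n\rangle|\le M\bigr]\le \sup_{x\in\cX,|\langle\lambda,x\rangle|\le M}\{\langle\lambda,x\rangle-I(x)\},
\]
and similarly from below; here the continuity of $\lambda$ is exactly what makes the truncation set closed/open. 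To discard the tail, I would use the moment hypothesis with a scalar $\gamma>1$: by H\"older's inequality applied to $\exp(a_n\langle\lambda,S^n\rangle)\mathbf{1}_{\{|\langle\lambda,S^n\rangle|>M\}}$ with $(1/p,1/q)=(1/\gamma,1-1/\gamma)$, and exponential Chebyshev to control the indicator via $\E[\exp(a_n\gamma'\langle\lambda,S^n\rangle)]$ for some $\gamma'>0$, the tail contribution is negligible as $M\to\infty$, uniformly in $n$. This yields
\[
\lim_{n\to\infty} a_n^{-1}\log\E[\exp(a_n\langle\lambda,S^n\rangle)]=\sup_{x\in\cX}\{\langle\lambda,x\rangle-I(x)\}=I^*(\lambda),
\]
so in particular the limit $\varphi(\lambda)$ exists in $\R$ and equals $I^*(\lambda)$.

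For the second stage I invoke the Fenchel--Moreau theorem on the locally convex Hausdorff topological vector space $\cX$: any proper convex lower semicontinuous function from $\cX$ to $[0,\infty]$ coincides with its biconjugate computed against the topological dual $\cX^*$. The rate function $I$ is convex by hypothesis, lower semicontinuous (and proper, since its infimum is $0$) as a good rate function, so
\[
I(x)=\sup_{\lambda\in\cX^*}\{\langle\lambda,x\rangle-I^*(\lambda)\}=\sup_{\lambda\in\cX^*}\{\langle\lambda,x\rangle-\varphi(\lambda)\}=\varphi^*(x).
\]
The required separation input for Fenchel--Moreau is provided by the assumption that $\cX$ is a locally convex Hausdorff topological vector space, so $\cX^*$ separates points and closed convex sets can be separated from exterior points by continuous affine functionals.

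The main obstacle is the first stage: unlike the finite-dimensional case, one cannot simply quote Varadhan's lemma for bounded continuous test functions, because $\langle\lambda,\cdot\rangle$ is unbounded, and one must justify the tail control using only the moment assumption, not exponential tightness of $\{S^n\}$ (which is not directly postulated in the topology of $\cX$). The truncation--H\"older argument indicated above is the standard workaround and is the only technical content; once it is in place, the second stage is a direct appeal to Fenchel--Moreau.
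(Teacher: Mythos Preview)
The paper does not prove this statement: Theorem~\ref{thm:DZ_4.5.10_general} is quoted verbatim from \cite[Theorem~4.5.10]{DZ10} and invoked as a black box in the proof of Theorem~\ref{thm:LDP_rdm_meas}~(2), so there is no in-paper argument to compare against. Your sketch is the standard proof of this result and is correct: the moment hypothesis for \emph{all} $\lambda\in\cX^*$ lets you apply it at $\gamma\lambda$ with $\gamma>1$, which is exactly the tail condition needed for the unbounded Varadhan lemma, yielding $\varphi(\lambda)=I^*(\lambda)$; then Fenchel--Moreau on the locally convex Hausdorff space $\cX$ gives $I=I^{**}=\varphi^*$ since $I$ is proper, convex, and lower semicontinuous.
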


Now, we are ready to prove Theorem~\ref{thm:LDP_rdm_meas}~(2).
\begin{proof}[Proof of Theorem~\ref{thm:LDP_rdm_meas}~(2)]
Let $\cX$ denote the algebraic dual $C_c(\Dl)'$ with the weak-* topology.
From Corollary~\ref{cor:loc_convex} with $W=C_c(\Dl)$, $\cX$ is a locally convex Hausdorff topological vector space and $\cX^*=\im(\iota)$.
Here, $\iota$ is the injective linear map defined in the proof of Corollary~\ref{cor:loc_convex}.
As mentioned in \textit{Proof of Theorem~\ref{thm:LDP_rdm_meas} except for~(2)}, the $C_c(\Dl)'$-valued process $\{\Phi(\xi^n)\}_{n\in\N}$ satisfies the LDP with speed $a_n$ and a good rate function $I'\colon C_c(\Dl)'\to[0,\infty]$ defined by
\[
I'(L)\coloneqq\sup_{m\in\N}\sup_{f_1,f_2,\ldots,f_m\in C_c(\Dl)}I_{f_1,f_2,\ldots,f_m}((L(f_1),L(f_2),\ldots,L(f_m)))
\]
for any $L\in C_c(\Dl)'$.

We next claim that $I'$ is a convex function from the assumption of Statement~(2).
Indeed, suppose that $L_1,L_2\in C_c(\Dl)'$ and $t\in(0,1)$, and write $L'\coloneqq tL_1+(1-t)L_2$.
Then, for any $m\in\N$ and $f_1,f_2,\ldots,f_m\in C_c(\Dl)$,
\begin{align*}
&I_{f_1,f_2,\ldots,f_m}((L'(f_1),L'(f_2),\ldots,L'(f_m)))\\
&=I_{f_1,f_2,\ldots,f_m}(t(L_1(f_1),L_1(f_2),\ldots,L_1(f_m))+(1-t)(L_2(f_1),L_2(f_2),\ldots,L_2(f_m)))\\
&\le tI_{f_1,f_2,\ldots,f_m}((L_1(f_1),L_1(f_2),\ldots,L_1(f_m)))+(1-t)I_{f_1,f_2,\ldots,f_m}((L_2(f_1),L_2(f_2),\ldots,L_2(f_m)))\\
&\le tI'(L_1)+(1-t)I'(L_2),
\end{align*}
which implies that $I'(L')\le tI'(L_1)+(1-t)I'(L_2)$.
Consequently, $I'$ is convex.
Furthermore, for any $\lm=\iota(f)\in\im(\iota)=\cX^*$,
\[
\sup_{n\in\N}a_n^{-1}\log\E[\exp(a_n\langle\lm,\Phi(\xi^n)\rangle)]
=\sup_{n\in\N}a_n^{-1}\log\E[\exp(a_n\xi^nf)]
<\infty
\]
from the assumption.
Therefore, Theorem~\ref{thm:DZ_4.5.10_general} implies that for every $\lm\in\cX^*$, the limit
\[
\varphi(\lm)\coloneqq\lim_{n\in\N}a_n^{-1}\log\E[\exp(a_n\langle\lm,\Phi(\xi^n)\rangle)]
\]
exists in $\R$, and it holds that
\[
I'(L)=
\sup_{\lm\in\cX^*}\{\langle\lm,L\rangle-\varphi(\lm)\}
=\sup_{f\in C_c(\Dl)}\{L(f)-\varphi(\lm)\}.
\]
for any $L\in\cX$.
Consequently, using Lemma~\ref{lem:LDP_restr} with the closed set $C_c(\Dl)'_+\subset C_c(\Dl)'$ and the fact that the map $\Phi\colon\cM(\Dl)\to C_c(\Dl)'_+$ is homeomorphism, we can conclude that the $\cM(\Dl)$-valued process $\{\xi^n\}_{n\in\N}$ satisfies the LDP with speed $a_n$ and the good rate function defined by~\eqref{eq:LDP_rdm_meas_1}.
\end{proof}

\section*{Acknowledgements}
The authors would like to thank Ryuya Namba, Tomoyuki Shirai, and Ryokichi Tanaka for useful discussions and valuable comments.
The first author is supported by a JSPS Grant-in-Aid for Transformative Research Areas (A) (22A201) and a JSPS Grant-in-Aid for Scientific Research (A) (JP20H00119).
The fourth author is supported by a JSPS Grant-in-Aid for Early-Career Scientists Grant Number (18K13426) and a JSPS Grant-in-Aid for Early-Career Scientists Grant Number (22K13929).


\end{document}